\numberwithin{equation}{section}
\theoremstyle{plain}
\newtheorem{theorem}{Theorem}[section]
\newtheorem{corollary}[theorem]{Corollary}
\newtheorem{lemma}[theorem]{Lemma}
\newtheorem{proposition}[theorem]{Proposition}
\theoremstyle{definition}
\newtheorem{definition}[theorem]{Definition}
\theoremstyle{remark}
\newtheorem{remark}[theorem]{Remark}
\DeclareMathOperator{\lw}{\rm LW}
\DeclareMathOperator{\Hom}{Hom}
\DeclareMathOperator{\gkdim}{\rm GKdim }
\DeclareMathOperator{\id}{\rm id }
\DeclareMathOperator{\sh}{\rm Sh }
\DeclareMathOperator{\lex}{\rm lex }
\DeclareMathOperator{\glex}{\rm glex }
\DeclareMathOperator{\gr}{\rm gr }
\newcommand{\B}{\mathcal{B}}
\newcommand{\C}{\mathcal{C}}
\newcommand{\N}{\mathcal{N}}
\renewcommand{\L}{\mathbb{L}}
\begin{document}

\title{The structure of connected (graded)  Hopf algebras}

\author{G.-S. Zhou,\; Y. Shen\; and \; D.-M. Lu\ }

\address{\rm Zhou \newline \indent
School of Mathematics and Statistics, Ningbo University, Ningbo 315211, China
\newline \indent E-mail: zhouguisong@nbu.edu.cn
\newline\newline
\indent Shen \newline
\indent Department of Mathematics, Zhejiang Sci-Tech University, Hangzhou 310018, China
\newline \indent E-mail: yuanshen@zstu.edu.cn
\newline\newline
\indent Lu \newline
\indent School of Mathematics, Zhejiang University, Hangzhou 310027, China
\newline \indent E-mail: dmlu@zju.edu.cn}

\begin{abstract}
In this paper, we establish a structure theorem for connected graded Hopf algebras over a field of characteristic $0$ by claiming the existence of a  family of homogeneous generators and a total order on the index set  that satisfy some desirable conditions. The approach to the structure theorem is constructive, based on
the combinatorial properties  of Lyndon words and the standard bracketing on words. As a surprising consequence of the structure theorem, we show that connected graded Hopf algebras of finite Gelfand-Kirillov dimension over a field of characteristic $0$ are all iterated Hopf Ore extensions of the base field. In addition,  some keystone facts of connected Hopf algebras over a field of characteristic $0$ are observed as corollaries of the structure theorem, without the assumptions of having finite Gelfand-Kirillov dimension (or affineness) on Hopf algebras or of that the base field is algebraically closed.
\end{abstract}

\subjclass[2010]{16T05, 68R15, 16P90, 16E65, 16S15, 16W50}


\keywords{connected Hopf algebra, iterated Hopf Ore extension, Gelfand-Kirillov dimension, Lyndon word}

\maketitle





\section*{Introduction}

\newtheorem{maintheorem}{\bf{Theorem}}
\renewcommand{\themaintheorem}{\Alph{maintheorem}}
\newtheorem{maincorollary}[maintheorem]{\bf{Corollary}}
\renewcommand{\themaincorollary}{}
\newtheorem{mainquestion}[maintheorem]{\bf{Question}}
\renewcommand{\themainquestion}{}

The classification project of noetherian/affine  Hopf algebras of finite Gelfand-Kirillov dimension (GK dimension, for short)  by using homological tools has attracted many recent studies and interests. The cases of GK dimensions one and two have witnessed much progress by Brown, Goodearl, Liu, Zhang and their collaborators \cite{BZ1, GZ, Liu1, Liu2, WZZ0, Liu3}.
Some mild conditions are added on Hopf algebras in all these works. For the cases of higher GK dimensions, one line of research is by adding on Hopf algebras a quite restrictive condition: the Hopf algebras are \emph{connected}, i.e., the coradical is one-dimensional. This line of research was initiated by Zhuang \cite{Zh}  and  continued in \cite{BG, BGZ, BOZZ, WZZ}.
As a matter of fact, connected Hopf algebras of GK dimensions three and four over algebraically closed fields of characteristic $0$ are completely classified by Zhuang \cite{Zh} and Wang, Zhang and Zhuang \cite{WZZ} respectively. Some new interesting Hopf algebras occur in the classification results. The motivation of this paper is to understand the structure of general connected  Hopf algebras.

There is little chance to list all isomorphism classes of connected Hopf algebras. However, it is well-known that commutative affine connected Hopf algebras over algebraically closed fields of characteristic $0$ are polynomial algebras in finitely many variables, and  cocommutative connected Hopf algebras over fields of characteristic $0$ are universal enveloping algebras of Lie algebras. Both facts play important roles in the study of general connected Hopf algebras. One goal of this paper is to describe to some extent, some other natural subclasses of connected Hopf algebras. We examine this idea by considering the subclass of connected graded Hopf algebras.

Recall that a  \emph{graded Hopf algebra} is a Hopf algebra $H$ equipped with a grading $H=\bigoplus_{n\geq 0}H_n$  such that  $H$ is simultaneously a graded algebra and a graded coalgebra,  and the antipode  preserves the given grading. Such a graded Hopf algebra is called \emph{connected} if $H_0$ is one-dimensional.   Clearly,  connected graded Hopf algebras are connected Hopf algebras. Note that the class of connected graded Hopf algebras properly contains the class of  universal enveloping algebras of positively graded Lie algebras.  Actually, there is a connected graded Hopf algebra of GK dimension $5$ which is  not  isomorphic even as an algebra to the universal enveloping algebra of any Lie algebra \cite{BGZ}. We have:

\begin{maintheorem}(Theorem \ref{structure-PBW-generator})\label{main-structure-PBW-generator}
Assume that the base field $k$ is of characteristic $0$. Let $H$ be a connected graded Hopf algebra. Then there exists an indexed family $\{z_\gamma\}_{\gamma\in \Gamma}$ of homogeneous elements of $H$ of positive degrees and a total order $\leq$ on  $\Gamma$ satisfying the following conditions:
\begin{enumerate}
\item for every index  $\gamma \in \Gamma$,
\[
\Delta_H(z_\gamma) \in 1\otimes z_\gamma +z_\gamma\otimes 1 + \bigoplus_{i,j>0,~ i+j = \deg(z_\gamma)} (H^{<\gamma})_i \otimes (H^{<\gamma})_j,
\]
where $H^{<\gamma}$ denotes the subalgebra of $H$ generated by $\{~ z_\delta~|~ \delta \in \Gamma,~ \delta<\gamma ~\}$;
\item  for every pair of indexes
$\gamma, \delta \in \Gamma$ with $\delta<\gamma$,
$$z_\gamma z_\delta -z_\delta z_\gamma \in (H^{<\gamma})_{\deg(z_\gamma z_\delta)};$$
\item the set $\{~ z_{\gamma_1} \cdots z_{\gamma_n} ~|~ n\geq0,~ \gamma_1,\cdots, \gamma_n \in \Gamma, ~ \gamma_1\leq \cdots \leq \gamma_n ~\}$ is a basis of $H$.
\end{enumerate}
Moreover, $\gkdim H$ is finite if and only if $\Gamma$ is finite; and in this case $\gkdim H = \#(\Gamma)$.
\end{maintheorem}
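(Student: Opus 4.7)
The strategy is to transfer the classical Lyndon-word / standard-bracketing machinery, familiar from free Lie algebras and Shirshov's theorem, to the graded Hopf algebra $H$. First I would fix a totally ordered set $X$ of homogeneous generators of $H$ --- for concreteness one may take any homogeneous basis of a graded complement of $H_{>0}^2$ in $H_{>0}$ --- and extend the chosen order on $X$ to the degree-lexicographic order $\glex$ on the free monoid $X^*$. Pulling $H$ back along the tautological surjection $\pi\colon k\langle X\rangle \twoheadrightarrow H$, every nonzero $h\in H$ acquires a well-defined leading word $\lw(h)\in X^*$. By the Lyndon factorization theorem, every word in $X^*$ admits a unique non-increasing factorization into Lyndon words, which gives the combinatorial skeleton of the whole argument.

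I would then define $\Gamma$ to be the set of Lyndon words that occur as leading words of elements of $H$, order $\Gamma$ by $\glex$, and for each $u\in\Gamma$ choose a homogeneous $z_u\in H$ of degree $|u|$ with $\lw(z_u) = u$ and --- after normalization --- with lower-order tail supported only on words whose Lyndon factorization uses Lyndon factors $<u$. The subalgebra $H^{<u}$ of the statement then coincides with the span of those elements of $H$ whose leading word factors into Lyndon pieces all strictly smaller than $u$. Clause (3) is the standard Lyndon argument: the ordered products $z_{u_1}\cdots z_{u_n}$ with $u_1\leq\cdots\leq u_n$ have pairwise distinct leading words (exactly $u_1\cdots u_n$, and these exhaust $X^*$ by unique Lyndon factorization), so they are linearly independent and span $H$.

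Clauses (1) and (2) require one to show that the Hopf structure of $H$ respects the Lyndon stratification, and I would prove both by simultaneous induction on $u$ in $\glex$ order. Because $H$ is connected graded, the reduced coproduct $\overline\Delta(z_u) = \Delta(z_u)-1\otimes z_u - z_u\otimes 1$ is a sum of tensors $a\otimes b$ with $a,b\in H_{>0}$ of total degree $|u|$; the content of (1) is that no $a$ or $b$ can have a Lyndon factor $\geq u$ in its leading word. This is extracted from a comparison of $\glex$-leading terms of $\Delta(z_u)$ in $k\langle X\rangle\otimes k\langle X\rangle$ with the combinatorial identity describing the shuffle (de)concatenation coproduct on Lyndon words. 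Clause (2) follows from the analogous leading-term calculation applied to $z_u z_v - z_v z_u$ for $v<u$, using that in $\glex$ the leading word of a commutator of bracketed Lyndon words is strictly smaller than the concatenation. The main obstacle I foresee is the rigorous coproduct bound in (1): \emph{a priori} the leading word of a summand of $\overline\Delta(z_u)$ is not obviously controlled, and one has to exploit both the primitive-like behavior produced by the Eulerian-idempotent decomposition (where characteristic zero is essential) and the specific combinatorics of Lyndon deconcatenation to force the required containment in $(H^{<u})\otimes (H^{<u})$.

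For the Gelfand--Kirillov dimension statement, clause (3) reduces the growth function of $H$ to counting non-increasing sequences in $\Gamma$ with bounded total degree. If $\Gamma$ is finite the count grows polynomially of degree $\#\Gamma$, giving $\gkdim H = \#\Gamma$; conversely, if $\Gamma$ were infinite, for each $N$ one may select $N$ generators $z_{u_1},\dots,z_{u_N}$ of sufficiently small degree to produce at least $\binom{N+n-1}{n}$ linearly independent standard monomials in bounded total degree, yielding super-polynomial growth and hence $\gkdim H = \infty$. Both directions are formal once (3) is established.
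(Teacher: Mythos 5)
There is a genuine gap, and it sits exactly where you declare the argument ``standard.'' For clause (3) you assert that the ordered products $z_{u_1}\cdots z_{u_n}$ are linearly independent because their leading words $u_1\cdots u_n$ are pairwise distinct and exhaust $X^*$ by unique Lyndon factorization. That reasoning only works if every nondecreasing concatenation of $I$-irreducible Lyndon words is itself $I$-irreducible, where $I=\ker(k\langle X\rangle\twoheadrightarrow H)$; otherwise such a product reduces modulo $I$ to something with a strictly smaller leading word and the distinctness of the formal concatenations proves nothing. This implication is false for general connected graded algebras ($k\langle x\rangle/(x^2)$: the letter $x$ is irreducible but $x^2$ is not), and it is even false for connected graded Hopf algebras in characteristic $p$ (take $k[x]/(x^p)$ with $x$ primitive). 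So clause (3) cannot be purely combinatorial: it must use both the coproduct and characteristic $0$, and your proposal uses neither there. In the paper this is precisely the content of Proposition \ref{quasi-Lie-imply-LK}(3), whose proof occupies all of Section \ref{section-quasi-Lie-imply-LK}: one first shows (Lemma \ref{quasi-Lie-basis}) that the brackets of words in $\C_I$ (powers of each irreducible Lyndon word truncated at its ``height'') are independent, by applying carefully built linear functionals $\id\otimes\rho$ to $\Delta(f)$ for a putative relation $f\in I$ and deriving a contradiction from $\Delta(I)\subseteq I\otimes k\langle X\rangle+k\langle X\rangle\otimes I$; one then shows (Lemma \ref{height}) that every irreducible Lyndon word has infinite height, the step where the identity $(\phi\otimes\psi)(\Delta(g))=n\cdot 1_k=0$ forces a contradiction only in characteristic $0$. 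Without an argument of this kind your clause (3) — and with it the spanning/independence bookkeeping underlying (1), (2) and the GK-dimension count — does not close.

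A secondary concern: for clause (1) you propose to compare leading terms of $\Delta(z_u)$ with the shuffle--deconcatenation coproduct and to invoke the Eulerian idempotent. But the lift $\Delta$ of $\Delta_H$ to $k\langle X\rangle$ is in general neither coassociative nor cocommutative, and the generators are not primitive, so that machinery does not apply; the paper instead proves, by induction along the Shirshov factorization, an exact formula for $\Delta([u]^n)$ valid for any \emph{triangular} comultiplication (Proposition \ref{comultiplication}), and then uses Proposition \ref{quasi-Lie-imply-LK}(1) to replace $k\langle X\rangle^{<u}$ by the subalgebra generated by irreducible brackets modulo $I$. You do flag (1) as your foreseen obstacle, which is fair, but the unflagged hole in (3) is the more serious one. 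Your normalization of $z_u$ (``tail supported on words all of whose Lyndon factors are $<u$'') is also not what the standard bracketing delivers: Lemma \ref{bracketing-leading} only controls the tail lexicographically, and a word $<_{\lex}u$ of the same degree can have a Lyndon factor $>_{\lex}u$ (e.g.\ $xy<_{\lex}yx$ with factor $y>_{\lex}yx$), so that normalization would itself need proof.
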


As corollaries of the above theorem, some interesting properties of connected Hopf algebras over a field of characteristic $0$ are established, without the assumptions of having finite GK dimension (or affineness) on Hopf algebras or that the base field is algebraically closed.
For example,  we show that a commutative connected Hopf algebra over a field of charactersitic $0$  is isomorphic as an algebra to the polynomial algebra in some family of variables (Propositon \ref{connected-commutative-Hopf-algebra}). In addition,  Theorem \ref{main-structure-PBW-generator}  has a corollary (Theorem \ref{structure-connected-Hopf}) which may help us find new connected Hopf algebras of finite GK dimension over a field of characteristic $0$.

The most surprising consequence of Theorem \ref{main-structure-PBW-generator} is on the relation of connected Hopf algebras and Ore extensions. Recall from \cite{BOZZ} that a Hopf algebra $H$ is called an \emph{iterated Hopf Ore extension of $k$} (IHOE, for short)  if there is a  chain of Hopf subalgebras $k= H^0 \subset H^1 \subset \cdots \subset H^n =H$ with each of the extensions $H^i\subset H^{i+1}$ being  an Ore extension as algebras. On  the one hand, it is known from the classification result that all connected Hopf algebras of GK dimension $\leq 4$ are IHOEs when the base field is  algebraically closed of characteristic $0$ (\cite[Subsection 3.4]{BOZZ}). On the other hand, not all connected Hopf algebras of finite GK dimension are IHOEs. Counterexamples occur in the class of universal enveloping algebras of semisimple Lie algebras (\cite[Example 3,1 (iv)]{BOZZ}). The following theorem provides a large class of connected Hopf algebras  that are IHOEs.

\begin{maintheorem}(Theorem \ref{structure-connected-graded-Hopf})
\label{main-structure-connected-graded-Hopf}
Assume that the base field $k$ is of characteristic $0$. Let $H$ be a connected graded Hopf algebra  of finite GK dimension $d$. Then $H$ is an IHOE. More precisely, there is a finite sequence $z_1,\cdots, z_d $ of homogeneous elements of $H$  of positive degrees such that
$$
H = k[z_1][z_2;\delta_2]\cdots [z_d; \delta_d]
$$
and
$$
\Delta_H(z_r) \in 1\otimes z_r + z_r\otimes 1 + \sum_{i,j>0, ~ i+j =\deg(z_r)}(H^{\leq r-1})_i \otimes (H^{\leq r-1})_j, \quad r=1,\cdots,d,
$$
where $H^{\leq 0}=k$, $H^{\leq i}$ is the subalgebra of $H$ generated by $z_1,\cdots, z_i$ for $i=1,\cdots,d$
and  $\delta_i$ a derivation of $H^{\leq i-1}$ for $i=2,\cdots, d$.
In particular, $H^{\leq 0}, H^{\leq1}, \cdots, H^{\leq d}$
are all graded Hopf subalgebras of $H$.
\end{maintheorem}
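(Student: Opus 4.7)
The plan is to invoke Theorem~\ref{structure-PBW-generator} and repackage its output as an iterated Ore extension; because the hypotheses coincide, almost all of the combinatorial and coalgebraic work is already done, and only bookkeeping remains. Since $\gkdim H = d$ is finite, the last sentence of Theorem~\ref{structure-PBW-generator} forces $\#(\Gamma)=d$, so I would relabel the totally ordered finite index set as $\Gamma=\{1<2<\cdots<d\}$, with corresponding homogeneous generators $z_1,\ldots,z_d$. Setting $H^{\leq 0}:=k$ and $H^{\leq r}$ to be the subalgebra of $H$ generated by $z_1,\ldots,z_r$, one has the tautological identification $H^{\leq r-1}=H^{<r}$, which translates each clause of Theorem~A into a statement about this specific filtration.

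Next, I would build each Ore extension step. The commutator $\mathrm{ad}(z_r):=[z_r,-]$ is a derivation of $H$, and clause~(2) of Theorem~A gives $[z_r,z_j]\in H^{<r}=H^{\leq r-1}$ for all $j<r$. Since $H^{\leq r-1}$ is generated as an algebra by $\{z_j:j<r\}$ and the derivation property is preserved under multiplication, $\mathrm{ad}(z_r)$ restricts to a derivation $\delta_r$ of $H^{\leq r-1}$. Clause~(3) supplies the PBW basis of $H$; restricting to monomials involving only $z_1,\ldots,z_r$ yields a basis of $H^{\leq r}$, and each such ordered monomial regroups uniquely as (ordered monomial in $z_1,\ldots,z_{r-1}$) times $z_r^m$. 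This exhibits $H^{\leq r}$ as a free left $H^{\leq r-1}$-module on $\{z_r^m\}_{m\geq 0}$, and together with the relation $z_r a = a z_r + \delta_r(a)$ for $a\in H^{\leq r-1}$, realizes $H^{\leq r}=H^{\leq r-1}[z_r;\delta_r]$; in particular $H^{\leq 1}=k[z_1]$.

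Finally, I would verify that each $H^{\leq r}$ is a graded Hopf subalgebra. Homogeneity of the $z_i$ gives the grading; clause~(1) of Theorem~A, namely
\[
\Delta_H(z_r)\in 1\otimes z_r + z_r\otimes 1 + \sum_{i,j>0,\, i+j=\deg(z_r)} (H^{\leq r-1})_i\otimes (H^{\leq r-1})_j \subset H^{\leq r}\otimes H^{\leq r},
\]
together with the multiplicativity of $\Delta_H$, yields $\Delta_H(H^{\leq r})\subset H^{\leq r}\otimes H^{\leq r}$ by induction on $r$. The counit clearly preserves $H^{\leq r}$, and closure under the antipode is automatic because any graded sub-bialgebra of a connected graded Hopf algebra is itself a Hopf subalgebra, the antipode being recursively determined by degree from the coproduct. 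The one conceptually delicate point is the restriction of $\mathrm{ad}(z_r)$ to $H^{\leq r-1}$, but it is settled cleanly by combining clauses~(2) and~(3); beyond that, this theorem is essentially a reformulation of Theorem~A in the finite-dimensional regime.
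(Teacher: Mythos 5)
Your proposal is correct and follows essentially the same route as the paper: relabel $\Gamma=\{1<\cdots<d\}$ via the finiteness clause of Theorem~\ref{structure-PBW-generator}, use clause~(2) to see that $\mathrm{ad}(z_r)$ restricts to a derivation of $H^{\leq r-1}$, use clause~(3) together with the resulting commutation relation to exhibit $H^{\leq r}$ as a free left $H^{\leq r-1}$-module on the powers of $z_r$, and read off the Hopf subalgebra statement from clause~(1). The only step stated more tersely than it deserves is that the ordered monomials in $z_1,\dots,z_r$ actually \emph{span} $H^{\leq r}$ (this requires the reordering argument via $z_r a = a z_r+\delta_r(a)$, not just clause~(3)), but you have all the ingredients in place and the paper is equally brief there.
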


Note that each  Hopf Ore extension $H^{\leq i} = H^{\leq i-1}[z_i;\delta_i]$  in the above theorem  has no twisting by any nontrivial automorphism. This phenomenon does not occur for general connected Hopf algebras.
In view of the above theorem, one may ask the following question in the spirit of \cite{BGZ}.

\begin{mainquestion}\label{question-IHOE}
Let $H$ be a Hopf algebra over a field of characteristic $0$. Assume that $H$ is of finite GK dimension and connected graded as an algebra. Then is $H$ an IHOE?
\end{mainquestion}

Since IHOEs are connected Hopf algebras (Corollary \ref{IHOE-connected}), a positive answer of the above question  implies a positive answer of  \cite[Question 0.4]{BGZ}, where the authors ask: If a Hopf algebra of finite GK dimension is connected graded as an algebra then is it a connected Hopf algebra?

Now let us briefly summarize the basic ideas used in the  proof of Theorem \ref{main-structure-PBW-generator}. The approach is constructive. First choose an arbitrary set $X$ of homogeneous generators of $H$ and then fix an appropriate well-order on $X$, and let $k\langle X\rangle$ denote the free ($k$-)algebra on $X$. By the combinatorial properties of Lyndon words and the standard bracketing on words, one may construct from $X$ a new family of homogeneous generators of $H$ indexed by a totally ordered set (Lemma \ref{standard-basis-2}).   Next we employ the coalgebra structure of $H$. It is easy to lift  $\Delta_H$ into a graded algebra homomorphism $\Delta: k\langle X\rangle \to  k\langle X\rangle \otimes  k\langle X\rangle$ that  is triangular  (see Definition \ref{definition-triangular-comultiplication}).  Generally, $\Delta(x) \neq 1\otimes x+x\otimes 1$ because $x$ is not necessarily primitive in $H$; and moreover $\Delta$ is not necessarily coassociative in the sense that $(\Delta \otimes \id ) \circ \Delta = (\id \otimes \Delta) \circ \Delta.$  Nevertheless, as an immediate consequence of  a  technical result (Proposition \ref{quasi-Lie-imply-LK}), the triangularity of $\Delta$ is sufficient to deduce that this new  family of  generators and the total order on the index set satisfy the requirements.

The technical result (Proposition \ref{quasi-Lie-imply-LK}) has its own interest. Ideals of free algebras that satisfy the conditions listed there are studied extensively in the subsequent paper \cite{ZSL3}.
The proof of the technical result is complicated and occupies a large part of the paper. However, the idea of the proof has its origin in \cite{Kh, Uf}, where Lyndon words and the braided bracketing  on words are employed  to construct  a Poincar\'{e}-Birkhoff-Witt type basis  for  primitively generated braided Hopf algebras.

The paper is organized as follows.  In Section 1,
we recall the definitions and basic facts of Lyndon words and the standard bracketing on words.  In Section 2,
we introduce the notion of triangular comultiplication on free algebras and study their properties.  Section 3 is the main part of the paper. It is devoted to study the structure of connected (graded) Hopf algebras. In particular, the above two main theorems  are proved there. As a matter of fact, though the terminologies and results developed in sections 1 and 2 have their own interest, they are aimed to prove Theorem \ref{main-structure-PBW-generator} (i.e. Theorem \ref{structure-PBW-generator}). The last section is  devoted to prove the technical result (Proposition \ref{quasi-Lie-imply-LK}).

Throughout the paper, we work over a fixed field denoted by $k$. All vector spaces, algebras, coalgebras and unadorned tensors  are over $k$. The notation $\mathbb{N}$ denotes the set of non-negative integers.

\section{Lyndon words and the standard bracketing}
\label{subsection-Lyndon}

The bricks in the construction of the required family of generators for connected graded Hopf algebras as stated in the main theorems are Lyndon words and the standard bracketing on words. In this section, we recall
 the definitions of Lyndon words and the standard bracketing.  Moreover we present some of their well-known properties for the reader's convenience. The  materials regarding Lyndon words and standard bracketing in our context are mostly extracted or modified  from \cite{GF, Kh, Lo}.

Throughout this section, $X$ stands for a well-ordered alphabet.  If $X=\{x_1,\cdots, x_\theta\}$ for some positive integer $\theta$, then $X$ is tacitly equipped with the natural order $x_1<\cdots <x_\theta$.
We denote by $\langle X\rangle $ the set of all words on $X$. The empty word is denoted  by $1$.  The \emph{length} of a word $u$ is denoted by $|u|$.

A word $v$ on $X$ is called a \textit{factor}  of a word $u$ on $X$  if there exist words $w_1,w_2$ on $X$ such that  $w_1vw_2=u$. If $w_1$ (resp. $w_2$) can be chosen to be the empty word then $v$ is called a \emph{prefix} (resp. \emph{suffix}) of $u$.  A  factor (resp. prefix, resp. suffix) $v$ of $u$ is called  \textit{proper} if $v\neq u$.

The \emph{lexicographic order} on $\langle X\rangle$, denoted by $<_{\rm lex}$, is defined as follows. For words $u,v\in \langle X\rangle$,
\begin{eqnarray*}
\label{definition-deglex}
u <_{\rm lex} v\,\, \Longleftrightarrow \,\, \left\{
\begin{array}{llll}
v \text{ is a  proper prefix of } u,\quad \text{or}&& \\
u=rxs,\, v=ryt,\, \text{ with } x,y\in X;\, x<y;  \, r,s,t\in \langle X\rangle.
\end{array}\right.
\end{eqnarray*}
Clearly, it is a total order compatible with the concatenation of words from left but not from right.  For example if $x,y\in X$ with $x>y$, one has $x>_{\rm lex}x^2$ but $xy<_{\rm lex} x^2y$.

\begin{lemma}\label{lex-order}
 If $u<_{\rm lex}v$ and $v$ is not a prefix of $u$,  then $uw<_{\rm lex}vw'$ for all words $w,w'$.
\end{lemma}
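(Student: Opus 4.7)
The plan is to proceed directly from the definition of $<_{\mathrm{lex}}$. The hypothesis $u <_{\mathrm{lex}} v$ gives us two alternatives: either $v$ is a proper prefix of $u$, or there is a common prefix $r$ after which $u$ and $v$ differ by a letter comparison. The extra hypothesis ``$v$ is not a prefix of $u$'' is tailor-made to exclude the first alternative, so we are immediately forced into the second.

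Thus I would write $u = rxs$ and $v = ryt$ with $x, y \in X$, $x < y$, and $r, s, t \in \langle X \rangle$. Then for arbitrary words $w, w'$, I set $s' := sw$ and $t' := tw'$, so that
\[
uw = rxs', \qquad vw' = ryt',
\]
with the same $r$, the same letters $x < y$, and $s', t' \in \langle X \rangle$. This is precisely the second clause in the definition of $<_{\mathrm{lex}}$, hence $uw <_{\mathrm{lex}} vw'$.

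There is no real obstacle here; the content of the lemma is just the observation that the failure of left-compatibility of $<_{\mathrm{lex}}$ with concatenation (as illustrated by the example $x >_{\mathrm{lex}} x^2$ vs.\ $xy <_{\mathrm{lex}} x^2 y$) is caused entirely by the ``prefix'' clause, and ruling that clause out via the hypothesis restores the desired monotonicity. The only thing to be a little careful about is that one does \emph{not} need any assumption on $w$ or $w'$, since $r$, $x$, $y$ are already determined by $u$ and $v$ and do not interact with the appended suffixes.
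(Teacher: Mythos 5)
Your proof is correct and is exactly the argument the paper has in mind: the paper simply declares the lemma ``clear from the definition,'' and you have spelled out that clarity, namely that the hypothesis rules out the prefix clause, forcing the decomposition $u=rxs$, $v=ryt$ with $x<y$, which survives appending arbitrary suffixes. No issues.
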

\begin{proof}
It is clear from the definition of the lexicographic order.
\end{proof}

\begin{definition}
A word $u\in \langle X\rangle$ is called \textit{Lyndon}  if $u$ is nonempty and $u>_{\rm lex} wv$ for every factorization $u=vw$ with $v,w$ nonempty. The set of all Lyndon words on $X$ is denoted by
$\L=\mathbb{L}(X)$.
\end{definition}

\begin{remark}
In \cite{GF,Lo,LR}, $u<_{\lex}v$ means that either $u$ is a proper prefix of $v$ or  $u=rxs$ and $v=ryt$ with $x<y$; and a word $u\in \langle X\rangle$ is Lyndon if $u\neq1$ and $u<_{lex} wv$ for every factorization $u=vw$ with $v,w\neq1$. So the results in \cite{GF,Lo,LR} need to be changed accordingly in our context. We follow the convention of \cite{Kh}, where Lyndon words are called \emph{standard words} after Shirshov.
\end{remark}

For a word  $u$ of length $\geq2$, define   $u_R\in \langle X\rangle $ to be  the lexicographically largest proper suffix of $u$ and define  $u_L\in \langle X\rangle $ by the decomposition $u=u_Lu_R$.  The pair of words
$$\sh(u):=(u_L,u_R)$$ is called  the {\em Shirshov factorization} of $u$. As an example,  $\sh(x_2^2x_1x_2x_1)=(x_2^2x_1,x_2x_1)$.

Some facts concerning Lyndon words used in the sequel are as follows.
\begin{proposition}\label{fact-Lyndon} \hfill
\begin{enumerate}
\item[(L1)] (\cite[Lemma 2]{Kh}) Let $u$ be a nonempty word. Then $u$ is Lyndon if and only if $u$ is lexicographically greater than any one of  its proper nonempty suffices.
\item[(L2)](\cite[Lemma 4]{Kh}) Let  $u_1>_{\rm lex}u_2>_{\rm lex} u'$ be nonempty words. If  $u_1u_2$ and $u'$ are Lyndon words, then $u_1u_2u' >_{\rm lex} u_1 u' >_{\rm lex} u'$ and $u_1u_2u' >_{\rm lex} u_2u' >_{\rm lex} u'$.
\item[(L3)] (\cite[Proposition 5.1.3]{Lo}) Let $u$ be word of length $\geq2$. Then $u$ is a Lyndon word if and only if $u_L$ and $u_R$ are both Lyndon words and $u_L>_{\rm lex} u_R$.
\item[(L4)](\cite[Proposition 5.1.4]{Lo}) Let $u,v$ be Lyndon words such that $u >_{\rm lex} v$. Then $\sh(uv)=(u,v)$ if and only if either $u$ is a letter or  $u$ is of length $\geq 2$ with $u_R\leq_{\rm lex} v$.
\item[(L5)](\cite[Theorem 5.1.5]{Lo}) Every nonempty word $u$ can be written uniquely as a (lexicographically) nondecreasing product $u=u_1u_2\cdots u_r$ of Lyndon words.
\end{enumerate}
The unique decomposition of a word $u$ as in (L5) is called the \emph{Lyndon decomposition} of $u$.   \hfill $\Box$
\end{proposition}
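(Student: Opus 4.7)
The plan is to verify each of (L1)--(L5) in our convention, in which Lyndon words are lexicographically \emph{largest} among their nontrivial cyclic rotations---the reverse of the convention of \cite{Lo, GF} and matching that of \cite{Kh}. Each statement has a classical proof in one of the two conventions, so the main task is to keep track of the order reversal while carefully applying Lemma \ref{lex-order} whenever an inequality must be propagated under concatenation on the right (for which $<_{\lex}$ is \emph{not} compatible). I would prove the facts in the order listed, since each subsequent one leans on the previous.

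For (L1), if $u$ is Lyndon and $w$ is a proper nonempty suffix, write $u = vw$ with $v \neq 1$; the Lyndon property gives $u >_{\lex} wv$, and by comparing the first letters of $v$ and $w$ (with a short induction on length to handle the case when these agree) one deduces $u >_{\lex} w$. The converse is a direct unpacking. Fact (L2) then follows by applying (L1) to the Lyndon word $u_1u_2$ to obtain $u_1u_2 >_{\lex} u_2$, combining this with the assumed chain $u_1 >_{\lex} u_2 >_{\lex} u'$, and invoking Lemma \ref{lex-order} to concatenate the inequalities on the right (after checking the prefix exception).

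Fact (L3) is the structural heart of the proposition. For the \emph{only if} direction, I would verify that the lex-largest proper suffix $u_R$ of a Lyndon word is itself Lyndon: any proper nonempty suffix of $u_R$ is also a proper nonempty suffix of $u$, hence lex-smaller than $u_R$ by its maximality, so (L1) applies. A parallel argument, using (L1) once more, shows that $u_L$ is Lyndon, while $u_L >_{\lex} u_R$ follows because $u_R$ is the \emph{largest} proper suffix. For the \emph{if} direction, I would check $u >_{\lex} wv$ for every nontrivial factorization $u = vw$ by splitting into cases according to whether the cut lies inside $u_L$ or inside $u_R$, reducing each case to (L1) and (L2) applied to the Lyndon factors.

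Once (L3) is available, (L4) reduces to identifying the lex-largest proper suffix of $uv$ under each hypothesis, a direct comparison between $v$ and the suffixes of $u$ followed by $v$ via Lemma \ref{lex-order}. Finally, for (L5), existence is produced by iteratively regrouping: start from the trivial decomposition $u = x_1 \cdots x_n$ into letters, and whenever adjacent blocks satisfy $u_i >_{\lex} u_{i+1}$ replace them by $u_iu_{i+1}$, which is Lyndon by (L3); the process terminates in a nondecreasing sequence of Lyndon words. Uniqueness follows from (L2): two distinct nondecreasing Lyndon decompositions of the same word would produce, at the first point of disagreement, a prefix/suffix comparison contradicting (L2). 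The main obstacle throughout is the non-right-compatibility of $<_{\lex}$, which forces every suffix comparison to be routed through Lemma \ref{lex-order} with explicit attention to the prefix exception; once one internalises this, all five facts fall out uniformly.
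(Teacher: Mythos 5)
The paper does not actually prove this proposition: it is a compilation of known facts quoted from \cite[Lemmas 2 and 4]{Kh} and \cite[Propositions 5.1.3, 5.1.4 and Theorem 5.1.5]{Lo}, with the order reversal relative to \cite{Lo} handled by the remark following the definition of Lyndon words. Your decision to reconstruct the proofs from scratch is therefore already a different (and more self-contained) route, and your overall plan --- prove (L1) first, derive (L2) from (L1) and Lemma \ref{lex-order}, then (L3), (L4), (L5) in order --- is the standard development and sound in outline.

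There is, however, one concrete misstep. In your existence argument for (L5) you merge adjacent blocks with $u_i >_{\lex} u_{i+1}$ and assert that $u_iu_{i+1}$ is ``Lyndon by (L3)''. But (L3) only applies to the Shirshov factorization $\sh(w)=(w_L,w_R)$, where $w_R$ is the lexicographically \emph{largest} proper suffix, and $(u_i,u_{i+1})$ need not be the Shirshov factorization of $u_iu_{i+1}$: by (L4) it fails whenever $|u_i|\geq 2$ and $(u_i)_R >_{\lex} u_{i+1}$ (for instance $X=\{x<y\}$, $u_i=yyx$, $u_{i+1}=x$, where $\sh(yyxx)=(y,\,yxx)$, not $(yyx,\,x)$). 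The fact you need --- that $ab$ is Lyndon for \emph{any} Lyndon words $a>_{\lex}b$ --- is true, but it requires its own short argument, most easily via (L1): every proper nonempty suffix of $ab$ is a proper suffix of $b$, or $b$ itself, or of the form $s'b$ with $s'$ a proper nonempty suffix of $a$; in each case one checks $ab$ dominates it using (L1) for $a$ and $b$, Lemma \ref{lex-order}, and the observation that a proper nonempty suffix of a Lyndon word cannot also be a prefix of it (otherwise the prefix clause and the letter-comparison clause of the definition of $<_{\lex}$ would give contradictory inequalities). Relatedly, the two genuinely delicate points of the whole development --- the ``border'' case in (L1), where the suffix $w$ is simultaneously a prefix of $u$, and the proof in (L3) that $u_L$ is Lyndon --- are precisely the places your sketch waves at (``a short induction'', ``a parallel argument''); that is where the real work lies, and a complete write-up would have to spell them out.
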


\begin{lemma}(\cite[Lemma 5]{Kh})\label{lex-order-Lyndon}
Let $u=u_1\cdots u_m$ and $v=v_1\cdots v_n$ be two nonempty words in Lyndon decomposition. Then $u<_{\lex} v$ if and only if either $n< m$ and $u_i= v_i$ for $i\leq n$, or there is an integer $l \leq \min\{m, n\}$ such that $u_i=v_i$ for $i< l$ but $u_l <_{\lex } v_l$. \hfill $\Box$
\end{lemma}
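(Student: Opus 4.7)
The plan is to prove only the reverse direction $(\Leftarrow)$; the forward direction then follows automatically from the totality of $<_{\lex}$, since the right-hand side of the lemma together with its symmetric version (swap the roles of $u$ and $v$) and the equality case $u=v$ exhaust all pairs of Lyndon decompositions, the Lyndon decomposition being a bijection by (L5). I would proceed by induction on $|u|+|v|$. Case (a) is immediate: the hypothesis says $v$ is a proper prefix of $u$, so $u<_{\lex}v$ by the definition of $<_{\lex}$. For case (b), stripping the common prefix $u_1\cdots u_{l-1}=v_1\cdots v_{l-1}$ and applying the induction hypothesis to the strictly shorter words $u_l\cdots u_m$ and $v_l\cdots v_n$ (whose Lyndon decompositions are exactly the tails $(u_l,\ldots,u_m)$ and $(v_l,\ldots,v_n)$, still nondecreasing by hypothesis) reduces us to the case $l=1$ with $u_1<_{\lex}v_1$.

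One then splits on how $u_1$ and $v_1$ compare. If $v_1$ is not a prefix of $u_1$, the inequality $u_1<_{\lex}v_1$ forces a first differing letter position at some $q\leq\min(|u_1|,|v_1|)$ where $u_1$'s letter is strictly smaller; Lemma \ref{lex-order} then propagates this to $u<_{\lex}v$. Otherwise $v_1$ is a proper prefix of $u_1$; writing $u_1=v_1w$ with $w$ nonempty, cancelling the common prefix $v_1$ reduces the problem to showing $wu_2\cdots u_m<_{\lex}v_2\cdots v_n$. When $n=1$ the right-hand side is empty, so the prefix clause of $<_{\lex}$ immediately finishes it. When $n\geq2$, apply (L1) to the Lyndon word $u_1$ and its proper nonempty suffix $w$: one has $w<_{\lex}u_1$, and since $|w|<|u_1|$ the word $u_1$ cannot be a prefix of $w$, so $w$ and $u_1$ first differ at some position $p\leq|w|$ with $w$'s $p$-th letter strictly smaller than $u_1$'s.

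The main obstacle is converting this letter-level inequality into the required comparison of $wu_2\cdots u_m$ against $v_2\cdots v_n$. Here one exploits the monotonicity $v_1\leq_{\lex}v_2\leq_{\lex}\cdots\leq_{\lex}v_n$: a case analysis of the definition of $<_{\lex}$ shows that it forces each $v_j$'s first letter to be no smaller, in the underlying alphabet order, than the first letter of $v_1$ (which equals the first letter of $u_1$). A careful position-by-position tracking then identifies the first disagreement between $wu_2\cdots u_m$ and $v_2\cdots v_n$, showing that the left word is strictly smaller at that position (or is an extension of the right by a nonempty tail, again covered by the prefix clause). Should the initial letters continue to coincide past position $p$ on the $v$-side, one strips a further common prefix to obtain a strictly shorter pair of words and invokes the induction hypothesis, whose Lyndon decomposition fits into alternative (a) or (b) of the lemma. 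This completes the induction, and the forward implication then drops out by totality as indicated above.
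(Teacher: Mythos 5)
Your global strategy is sound and is essentially the standard one: prove only $(\Leftarrow)$, deduce $(\Rightarrow)$ from totality together with the mutual exclusivity and exhaustiveness of the alternatives (which uses the uniqueness in (L5)), reduce to $l=1$ by cancelling the common prefix $u_1\cdots u_{l-1}$, and split according to whether $v_1$ is a prefix of $u_1$. Case (a), the sub-case where $v_1$ is not a prefix of $u_1$ (Lemma \ref{lex-order}), and the sub-case $n=1$ are all handled correctly. (For what it is worth, the paper gives no proof of this lemma at all --- it simply cites \cite[Lemma 5]{Kh} --- so the only question is whether your argument stands on its own.)

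It does not quite: there is a genuine gap exactly at the point you call ``the main obstacle''. Write $u_1=v_1w$ with $w\neq 1$, and let $p\le|w|$ be the first position where $w$ differs from $u_1$, so that $w[p]<u_1[p]$ by (L1). The claim to be proved is $wu_2\cdots u_m<_{\lex}v_2\cdots v_n$. Your stated tool --- that each $v_j$ has first letter no smaller than the first letter of $v_1=u_1$ --- only settles the case $p=1$; when $p>1$ the two sides may agree far past the first letter, and the first disagreement can fall beyond $|v_2|$, inside $v_3v_4\cdots$. What is actually needed is a separate inductive sublemma of roughly the following shape: if $c$ is Lyndon and $d_2,\dots,d_n$ are Lyndon words with $d_j\ge_{\lex}c$ for all $j$, then either one of $c$ and $d_2\cdots d_n$ is a prefix of the other, or $d_2\cdots d_n$ exceeds $c$ letterwise at their first disagreement (proved by induction on $n$ using that proper suffixes of $c$ drop below $c$ at a letter). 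Applying this with $c=u_1$ (note $v_j\ge_{\lex}v_1>_{\lex}u_1$) and combining with $w[p]<u_1[p]$ settles every case. Your proposed fallback --- ``strip a further common prefix and invoke the induction hypothesis'' --- is not legitimate as stated, because the induction hypothesis applies to pairs of words \emph{presented in Lyndon decomposition} and verified to satisfy alternative (a) or (b), whereas $wu_2\cdots u_m$ is not so presented: $w$ need not be Lyndon (e.g.\ $w=x_1x_2x_1x_1$ as a proper suffix of the Lyndon word $u_1=x_2x_1x_2x_1x_1$ with $v_1=x_2$), and even when it is, $w\le_{\lex}u_2$ need not hold. Until such a sublemma is stated and proved, the argument is incomplete at its crux.
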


We denote by  $k\langle X\rangle$ the free algebra on $X$, which has all words on $X$ as a linear basis. Elements of $k\langle X\rangle $ are frequently  called (noncommutative) polynomials on $X$ (with coefficients in $k$).
Let $[-,-]: k\langle X\rangle \times k\langle X\rangle \to k\langle X\rangle$ be the commutator operation on $k\langle X\rangle$ defined by $$[f,g] = fg-gf.$$ The \emph{standard bracketing} on $\langle X\rangle$ is the map $[-]: \langle X\rangle \to k\langle X\rangle$ defined as follows.  First set $[1] =1$ and  $[x]:=x$ for  $x\in X$; and then for  words $u$ of length $\geq 2$, inductively  set
\begin{eqnarray*}
[u]=
\left\{
\begin{array}{ll}
~ [[u_L], [u_R]], & u \text{ is Lyndon},  \\
~ [u_L]  [u_R], &  u \text{ is not Lyndon}.
\end{array}
\right.
\end{eqnarray*}
Note that $[u_1u_2\cdots u_n] = [u_1][u_2]\cdots [u_n]$ for Lyndon words $u_1\leq_{\rm lex}u_2\leq_{\rm lex} \cdots \leq_{\rm lex} u_n$.

\begin{lemma}(\cite[Lemma 5]{Kh})\label{bracketing-leading}
Let $u$ be a word on $X$. Then $[u]-u$  is a linear combination of the words lexicographically less than $u$ and
having the same occurrences of each letter as $u$.   \hfill $\Box$
\end{lemma}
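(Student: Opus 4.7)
The plan is to proceed by induction on the length $|u|$, using the Shirshov factorization to reduce to strictly shorter words. The base cases $|u|\leq 1$ are immediate since $[u]=u$ by definition of the standard bracketing.

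For the inductive step with $|u|\geq 2$, write $u=u_Lu_R$ for the Shirshov factorization, so that $|u_L|,|u_R|<|u|$. Applying the inductive hypothesis yields
\[
[u_L] = u_L + \sum_i c_i v_i, \qquad [u_R] = u_R + \sum_j d_j w_j,
\]
where every $v_i<_{\rm lex}u_L$ and every $w_j<_{\rm lex}u_R$, each $v_i$ sharing the letter content of $u_L$, and each $w_j$ sharing that of $u_R$; in particular $|v_i|=|u_L|$ and $|w_j|=|u_R|$. Expanding the product, $[u_L][u_R]-u_Lu_R$ becomes a linear combination of words of the three shapes $u_Lw_j$, $v_iu_R$, and $v_iw_j$, each manifestly having the same letter content as $u=u_Lu_R$. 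The lexicographic order is compatible with left-concatenation (immediate from the definition), so $u_Lw_j<_{\rm lex}u_Lu_R$. For the other two families, the equality $|v_i|=|u_L|$ together with $v_i\neq u_L$ shows that $u_L$ cannot be a prefix of $v_i$, whence Lemma~\ref{lex-order} yields both $v_iu_R<_{\rm lex}u_Lu_R$ and $v_iw_j<_{\rm lex}u_Lu_R$. Thus $[u_L][u_R]-u$ is a combination of words strictly less than $u$ in $<_{\rm lex}$ with the same letter content as $u$.

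It remains to handle the two cases in the definition of $[u]$. If $u$ is not Lyndon, then $[u]=[u_L][u_R]$ and the conclusion is already established. If $u$ is Lyndon, then $[u]=[u_L][u_R]-[u_R][u_L]$, and the same argument applied symmetrically shows that every monomial appearing in $[u_R][u_L]$ either equals $u_Ru_L$ or is strictly less than $u_Ru_L$ in $<_{\rm lex}$, again with the correct letter content. The defining inequality of a Lyndon word forces $u_Ru_L<_{\rm lex}u$, so every such monomial is in fact $<_{\rm lex}u$, and combining the two estimates gives the desired expression for $[u]-u$.

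The only delicate point in executing this plan is that $<_{\rm lex}$ is not compatible with right-concatenation in general; this is precisely why Lemma~\ref{lex-order} must be invoked, and why the inductive control on letter content (which supplies the length equalities that rule out prefix relations) is indispensable. The Lyndon defining inequality $u>_{\rm lex}u_Ru_L$ is the other essential ingredient, used exactly once to close the Lyndon case.
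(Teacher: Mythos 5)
Your argument is correct and complete: the induction on length via the Shirshov factorization, the use of left-compatibility of $<_{\rm lex}$ for the words $u_Lw_j$, the invocation of Lemma~\ref{lex-order} (justified by the length equality $|v_i|=|u_L|$, which rules out the prefix relation) for the remaining monomials, and the Lyndon inequality $u>_{\rm lex}u_Ru_L$ to absorb the $[u_R][u_L]$ term are exactly the right ingredients. The paper itself gives no proof here, only the citation to Kharchenko, and your write-up is the standard argument behind that reference.
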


\begin{lemma}(\cite[Lemma 6]{Kh})\label{bracketing-expansion}
Let $u >_{\lex} v$ be Lyndon words. Then $[[u], [v]]$ is a linear combination of  polynomials of the form $[w_1]\cdots [w_n]$ with $w_1, \cdots, w_n$ Lyndon words satisfying that $v <_{\lex} w_1,\cdots, w_n\leq_{\lex} uv$ and $w_1\cdots w_n$ has the same occurrences of each letter as $uv$.  \hfill $\Box$
\end{lemma}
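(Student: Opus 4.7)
The plan is to argue by induction on the length $|uv|$ (with possible lexicographic refinement), using Proposition~\ref{fact-Lyndon}(L4) to split into two cases based on the Shirshov factorization of $uv$. If either $u$ is a letter, or $|u|\geq 2$ with $u_R\leq_{\lex}v$, then (L4) gives $\sh(uv)=(u,v)$, (L3) then makes $uv$ Lyndon, and the definition of the standard bracketing yields $[uv]=[[u],[v]]$; the lower bound $v<_{\lex}uv$ comes from (L1). Taking $n=1$, $w_1=uv$ closes this case.

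Otherwise $|u|\geq 2$ and $u_R>_{\lex}v$. Since $[u]=[[u_L],[u_R]]$ by definition of the standard bracketing, the Jacobi identity in the associative algebra $k\langle X\rangle$ gives
\begin{align*}
[[u],[v]] = [[u_L], [[u_R],[v]]] + [[[u_L],[v]], [u_R]].
\end{align*}
By (L3), $u_L,u_R$ are Lyndon with $u_L>_{\lex}u_R>_{\lex}v$ and $|u_L|,|u_R|<|u|$, so the inductive hypothesis expands $[[u_R],[v]]$ and $[[u_L],[v]]$ into linear combinations of products of bracketed Lyndon words $[y_1]\cdots[y_s]$ and $[x_1]\cdots[x_t]$ with $v<_{\lex}y_j\leq_{\lex}u_R v$ and $v<_{\lex}x_j\leq_{\lex}u_L v$, respectively.

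Next, apply the Leibniz rules $[a,bc]=[a,b]c+b[a,c]$ and $[bc,a]=[b,a]c+b[c,a]$ repeatedly to reduce each term to a product in which exactly one factor is a binary commutator of the form $[[u_L],[y_j]]$ or $[[x_j],[u_R]]$. For the first, one observes $u_L>_{\lex}u_R v\geq_{\lex}y_j$, the strict inequality following from a short case analysis on whether $u_L$ is a proper prefix of $u_R$ (combined with Lemma~\ref{lex-order}); so the inductive hypothesis applies directly. For the second, the relative lex order of $x_j$ and $u_R$ must be determined: the bracket vanishes if they are equal, and otherwise antisymmetry $[[x_j],[u_R]]=-[[u_R],[x_j]]$ is used to place the lex-larger word on the left before invoking the inductive hypothesis.

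The three conclusions are then verified as follows. Letter counts are preserved throughout because both the commutator and the standard bracketing preserve multidegrees in each letter. The lower bound $v<_{\lex}w_i$ propagates since every recursive call only enlarges the minimum Lyndon factor. For the upper bound $w_i\leq_{\lex}uv$, one tracks the intermediate bounds: $u_L y_j\leq_{\lex}u_L(u_R v)=uv$ by monotonicity of lex order under left concatenation, and symmetrically $x_j u_R$ (respectively $u_R x_j$) is bounded by $u_L v\cdot u_R$ (respectively $u_R\cdot u_L v$), each of which is $\leq_{\lex}u_L u_R v=uv$ via Lemma~\ref{lex-order} together with $v<_{\lex}u_R<_{\lex}u_L$. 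The principal technical obstacle lies in the induction bookkeeping: the naive measure $|uv|$ need not strictly decrease when a recursive call returns a single Lyndon word of full length (for instance $y_1=u_Rv$), so a refined well-founded measure---such as lexicographic ordering on the pair $(|u|,|v|)$, or a structural ordering on the Jacobi recursion tree---is needed to guarantee that every recursive invocation operates on strictly smaller data.
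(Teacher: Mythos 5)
The paper offers no proof of this lemma; it is quoted verbatim from \cite[Lemma 6]{Kh}. Your skeleton is the standard argument behind that citation: split on whether $\sh(uv)=(u,v)$ using (L4), and otherwise apply the Jacobi identity $[[u],[v]]=[[u_L],[[u_R],[v]]]+[[[u_L],[v]],[u_R]]$, expand the inner brackets by induction, and push $[u_L]$ (resp.\ $[u_R]$) through the resulting products with the Leibniz rule. The combinatorial estimates you record are all correct and derivable from Lemma \ref{lex-order} and (L1)--(L3): $u_L>_{\lex}u_Rv$; $u_Rv,\,u_Lv<_{\lex}uv$ (this is (L2)); $u_Ly_j\leq_{\lex}uv$ by left-concatenation monotonicity; and $x_ju_R,\,u_Rx_j<_{\lex}uv$, which follow from Lemma \ref{lex-order} applied to $x_j<_{\lex}uv$ and to $u_R<_{\lex}u$ (the longer word is never a prefix of the shorter one). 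Your phrase ``bounded by $u_Lv\cdot u_R$'' tacitly invokes right-concatenation monotonicity, which fails for this order, but Lemma \ref{lex-order} does the job as you indicate.

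The genuine gap is exactly where you flag it --- well-foundedness of the recursion --- and the fix you float does not work. Lexicographic induction on $(|u|,|v|)$ fails: take $u=x_3x_2$ and $v=x_2x_1x_1x_1$ over $x_1<x_2<x_3$. Then $u_L=x_3$, $u_R=x_2>_{\lex}v$, and $[[u_L],[v]]=[x_3x_2x_1^3]$ is a single bracket of full length $5$, so the second Jacobi term forces the call $[[x_3x_2x_1^3],[x_2]]$, whose parameter pair $(5,1)$ exceeds $(2,4)$ in the first coordinate. (The reversed pair $(|v|,|u|)$ fails symmetrically on the first Jacobi term when $[[u_R],[v]]$ returns a single bracket $[y_1]$ with $|y_1|=|u_Rv|$.) A measure that does work: induct primarily on $|uv|$. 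Every recursive call has total length at most $|uv|$ and the same multiset of letters, and in the calls where the total length is preserved the \emph{right-hand} argument strictly increases with respect to $<_{\lex}$ --- it moves from $v$ to some $y_j>_{\lex}v$, to $u_R>_{\lex}v$, or to some $x_j>_{\lex}v$. Since only finitely many words carry a given letter multiset, chains of length-preserving calls must terminate. With that replacement your argument closes; as written, the termination of the recursion --- which you yourself call the principal obstacle --- is unproved.
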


\begin{lemma}(\cite[Lemma 7]{Kh})\label{reordering-bracketing}
For any nonempty finite sequence $u_1,\cdots, u_n$ of Lyndon words,  the product $[u_1]\cdots [u_n]$ is a linear combination of  the standard bracketing of  words that are  nondecreasing products of Lyndon words that  lie between the smallest and greatest Lyndon words among $u_1,\cdots, u_n$, and that
have the same occurrences of each letter as $u_1\cdots u_n$. \hfill $\Box$
\end{lemma}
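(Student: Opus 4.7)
The plan is to proceed by induction on a suitably chosen well-founded measure of the formal product $[u_1]\cdots[u_n]$. Since the letter multiset of $u_1\cdots u_n$ is preserved under all the manipulations to be performed, every bracket $[v]$ that arises involves a Lyndon word $v$ of length bounded by $|u_1\cdots u_n|$, so the set of Lyndon words we need to compare is finite. To each sequence $S = (u_1,\cdots,u_n)$ of Lyndon factors I associate the pair $(\mu(S),\operatorname{inv}(S))$, where $\mu(S)$ is the multiset $\{u_1,\cdots,u_n\}$ compared by the multiset (Dershowitz--Manna) extension of $\leq_{\lex}$, and $\operatorname{inv}(S)$ is the number of pairs $i<j$ with $u_i>_{\lex} u_j$. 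Pairs are compared lexicographically, yielding a well-founded order on the finite pool of relevant sequences.

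If $\operatorname{inv}(S)=0$, then the sequence $u_1\leq_{\lex}\cdots\leq_{\lex} u_n$ is itself the Lyndon decomposition of the word $u_1\cdots u_n$ by Proposition~\ref{fact-Lyndon}(L5), and the standard bracketing definition gives $[u_1]\cdots[u_n]=[u_1\cdots u_n]$, which is exactly a bracketing of the required form. Otherwise, choose an adjacent inversion $u_i>_{\lex} u_{i+1}$ and write
\[
[u_1]\cdots[u_n]=[u_1]\cdots[u_{i-1}][u_{i+1}][u_i][u_{i+2}]\cdots[u_n]+[u_1]\cdots[u_{i-1}]\bigl[[u_i],[u_{i+1}]\bigr][u_{i+2}]\cdots[u_n].
\]
For the first (swapped) summand, the multiset $\mu(S)$ is unchanged but the inversion at position $(i,i+1)$ is removed while the relative order of every other pair is preserved, so $\operatorname{inv}$ strictly decreases. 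For the second summand, expand $[[u_i],[u_{i+1}]]$ by Lemma~\ref{bracketing-expansion} into a linear combination of products $[w_1]\cdots[w_m]$ of bracketings of Lyndon words satisfying $u_{i+1}<_{\lex}w_j\leq_{\lex} u_iu_{i+1}$ and with $w_1\cdots w_m$ having the same letter multiset as $u_iu_{i+1}$; since $u_i$ is a proper prefix of $u_iu_{i+1}$, the convention here gives $u_iu_{i+1}<_{\lex}u_i$, hence $w_j<_{\lex}u_i$ for every $j$. Thus the passage from $\mu(S)$ to the multiset of any resulting product replaces $\{u_i,u_{i+1}\}$ by a multiset each of whose elements is strictly $\lex$-less than $u_i$, a strict decrease in the multiset order.

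Having shown that every summand on the right-hand side has strictly smaller measure, the induction hypothesis applies and reduces each to a linear combination of standard bracketings $[w]$ of nondecreasing products of Lyndon words. It remains only to check that the required range condition is preserved. In the swapped term the multiset of Lyndon factors is unchanged, so the $\lex$-smallest and $\lex$-largest elements in the new sequence coincide with $m$ and $M$; in the expanded term the new factors satisfy $m\leq_{\lex}u_{i+1}<_{\lex}w_j<_{\lex}u_i\leq_{\lex}M$, so the $\lex$-extremes can only shrink. Finally, every step preserves the total letter multiset, which is therefore identical to that of $u_1\cdots u_n$.

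I expect the main technical subtlety to be simply the bookkeeping around the measure: one must verify that inversions do not need to be tracked across the two reduction operations because the multiset order is lex-primary and strictly decreases whenever the bracketing expansion is invoked, while the swap operation (which alone preserves the multiset) is controlled by the secondary, classical inversion count. Once this separation is in place, both the termination of the procedure and the desired form of the output follow immediately from Lemma~\ref{bracketing-expansion} and the Lyndon decomposition property (L5).
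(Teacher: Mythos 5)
Your proof is correct: the rewriting $[u_i][u_{i+1}] = [u_{i+1}][u_i] + [[u_i],[u_{i+1}]]$ at an adjacent inversion, with Lemma~\ref{bracketing-expansion} controlling the commutator term and a well-founded (multiset, inversion-count) measure ensuring termination, is exactly the standard argument behind \cite[Lemma 7]{Kh}, which the paper cites without reproving. The key point --- that every $w_j$ produced by the expansion satisfies $u_{i+1} <_{\lex} w_j \leq_{\lex} u_iu_{i+1} <_{\lex} u_i$ because $u_i$ is a proper prefix of $u_iu_{i+1}$ in this paper's reversed lexicographic convention, so the multiset strictly decreases --- is correctly identified and justified.
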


For a word $w$ on $X$, we write $k\langle X\rangle^{<w}$ (resp. $k\langle X\rangle^{\leq w}$) for  the  subalgebra of $k\langle X\rangle$ generated by
$$
\left\{\,
[u]\;|\; u<_{\lex}w, u\in \L
\right\}
\quad (\text{resp. }\left\{\,
[u]\;|\; u\leq_{\lex}w, u\in \L
\right\}).
$$

\begin{lemma}\label{commutator-subalgebra}
For every pair of Lyndon words $u>_{\lex} v$ on $X$,
\[
[[u], k\langle X\rangle^{< v}] \subseteq k\langle X\rangle^{< uv} \quad \text{and} \quad [[u], k\langle X\rangle^{\leq v}] \subseteq k\langle X\rangle^{\leq uv}.
\]
\end{lemma}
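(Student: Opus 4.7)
The plan is to reduce the lemma to the single-generator case via a Leibniz-rule argument, then invoke Lemma~\ref{bracketing-expansion}. The operator $D := [[u], -\,]$ is a derivation on $k\langle X\rangle$ by the identity $[a, bc] = [a,b]c + b[a,c]$. Since $k\langle X\rangle^{<v}$ is generated as a subalgebra by $\{\,[w] : w \in \L,\ w <_{\lex} v\,\}$, and since $k\langle X\rangle^{<uv}$ will turn out to contain $k\langle X\rangle^{<v}$, it suffices to verify $D([w]) \in k\langle X\rangle^{<uv}$ for each Lyndon $w <_{\lex} v$; the Leibniz rule then propagates the containment to all products of generators. The non-strict inclusion is handled by the same template.

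The verification of $D([w]) \in k\langle X\rangle^{<uv}$ is precisely what Lemma~\ref{bracketing-expansion} delivers: it expands $[[u],[w]]$ as a linear combination of products $[w_1]\cdots[w_n]$ with each $w_i$ Lyndon and $w <_{\lex} w_i \leq_{\lex} uw$. The order $<_{\lex}$ is left-compatible with concatenation --- immediate from its definition --- so $w <_{\lex} v$ gives $uw <_{\lex} uv$, and hence $w_i \leq_{\lex} uw <_{\lex} uv$ for each $i$; the entire product therefore lands in $k\langle X\rangle^{<uv}$. For the non-strict inclusion, the only new case is $w = v$, where Lemma~\ref{bracketing-expansion} gives only $w_i \leq_{\lex} uv$, still enough to place the product in $k\langle X\rangle^{\leq uv}$.

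The nontrivial ingredient, and the main obstacle, is the order claim $v <_{\lex} uv$, needed to ensure $k\langle X\rangle^{<v} \subseteq k\langle X\rangle^{<uv}$ in the first place (and similarly $k\langle X\rangle^{\leq v} \subseteq k\langle X\rangle^{\leq uv}$). I would prove it by a case split on the first letters of $u$ and $v$. If $u_1 \neq v_1$, then $u >_{\lex} v$ forces $u_1 > v_1$, so $v <_{\lex} uv$ already at position $1$. Otherwise $u_1 = v_1$, and either $u$ and $v$ first diverge at some position $k \geq 2$ within both (where $u_k > v_k$ gives $v <_{\lex} uv$ at position $k$), or one is a proper prefix of the other; since $u >_{\lex} v$ excludes $v$ being a proper prefix of $u$, the remaining subcase is $v = uw$ with $w$ nonempty. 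Here the tail comparison of $v$ with $uv$ past the common prefix $u$ reduces to comparing $w$ with $v$; (L1) implies $v >_{\lex} w$ and moreover that $w$ cannot be a proper prefix of $v$ (otherwise a proper suffix of $v$ would prefix $v$, contradicting (L1)), so $w$ and $v$ first disagree with $w$'s letter strictly smaller, yielding $v <_{\lex} uv$. The Lyndon hypothesis on $v$ is essential here: for $u = x,\ v = xx$ one has $u >_{\lex} v$ but $uv = xxx <_{\lex} v$.
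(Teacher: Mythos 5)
Your proposal is correct and follows essentially the same route as the paper: reduce to the generators $[w]$ with $w<_{\lex}v$ via the Leibniz rule, apply Lemma~\ref{bracketing-expansion} to place $[[u],[w]]$ in $k\langle X\rangle^{\leq uw}$, and conclude via left-compatibility of $<_{\lex}$ together with the inequality $v<_{\lex}uv$. The paper merely cites (L1) for that last inequality where you spell out the case analysis, but the substance is identical.
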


\begin{proof}
We only show the first inclusion, the second one is similar to see. Let $w_1, \cdots, w_r$ be a finite sequence of Lyndon words that $<_{\lex} v$. One has
\[
[[u], [w_1]\cdots [w_r]] = \sum_{i=1}^{r} [w_1]\cdots [w_{i-1}] \cdot  [[u], [w_i] ] \cdot [w_{i+1}] \cdots [w_r].
\]
By Proposition \ref{fact-Lyndon} (L1), $w_i<_{\lex} v <_{\lex} uv$; and by Lemma \ref{bracketing-expansion}, $$[[u], [w_i]] \in k\langle X\rangle^{\leq uw_i} \subseteq k\langle X\rangle^{< uv}$$ because $uw_i<_{\lex} uv$. The result follows immediately.
\end{proof}

Now we assume that  the free algebra $k\langle X\rangle$ is connected graded with each letter homogeneous of positive degree. The degree of a homogeneous polynomial $f$ is denoted by $\deg(f)$.  Define the \emph{graded lex order} on $\langle X\rangle$, denoted by $<_{\glex}$, as follows. For words $u, \, v$,
\begin{eqnarray*}
\label{definition-deglex}
u <_{\rm \glex} v\,\, \Longleftrightarrow \,\, \left\{
\begin{array}{llll}
\deg(u)<\deg(v), \quad \text{or} && \\
\deg(u) =\deg(v)\, \text{ and } u<_{\lex} v.
\end{array}\right.
\end{eqnarray*}
Clearly, it is a well order compatible with the  concatenation of words from both sides.  The \emph{leading word} of a nonzero polynomial $f$, denoted by $\lw(f)$, is the largest word that occurs in $f$   with respect to the graded lex order. Note that $\lw([w]) =w$ for every word $w$ by Lemma \ref{bracketing-leading}.

Let $I$ be an ideal of $k\langle X\rangle$. A word on $X$ is called \emph{$I$-reducible}  if it is the leading word of a polynomial in $I$. A word that is not $I$-reducible is called \emph{$I$-irreducible}. We let
\begin{align*}
&\N_I:=\{~ u\in \L\; |\; u \text{ is } I\text{-irreducible}~\}, \\
&\B_I:=\{~ u_1\cdots u_n\; |\; u_1\leq_{\lex}\cdots\leq_{\lex}u_n, u_1,\cdots,u_n\in \N_I ~ \}.
\end{align*}
By Proposition \ref{fact-Lyndon} (L5), it is  easy to see  that $\B_I$ contains every $I$-irreducible word.

\begin{remark}
Let $X=\{x_1,x_2,x_3\}$ and let $f=[x_2x_1]-x_3=x_2x_1-x_1x_2-x_3$. If $k\langle X\rangle$ is connected graded by $\deg(x_1)=\deg(x_2)=\deg(x_3) =1$ then $\lw(f) =x_2x_1$; but if $k\langle X\rangle$ is connected graded by $\deg(x_1)=\deg(x_2)=1$ and $\deg(x_3) =2$ then $\lw(f) =x_3$. So for a given ideal $I$, $\N_I$ may include $X$ with respect to one grading structure on $k\langle X\rangle$ but does not with respect to another.
\end{remark}

\begin{lemma}\label{standard-basis-2}
Let $I$ be an ideal of $k\langle X\rangle$ and $A:= k\langle X\rangle/I$. For every integer $n\geq 0$, the residue classes  of the standard bracketing of  $I$-irreducible words of degree $\leq n$ form a basis of the subspace  $F_nA:=(k\langle X\rangle_{\leq n} +I)/ I$ of   $A$. Consequently, the residue classes of the standard bracketing of $I$-irreducible words form a basis of $A$. In particular, $A$ is generated by $\{~ [u]+I ~|~u \in \N_I ~\}$.
\end{lemma}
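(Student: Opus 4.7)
My plan is to prove the basis statement by a standard leading-word / Gr\"obner-style argument with respect to the well-order $<_\glex$ on words. The only nontrivial ingredient I need is Lemma \ref{bracketing-leading}, which asserts that $\lw([u]) = u$ with leading coefficient $1$ and that the remaining monomials in $[u]$ are of the same degree as $u$ and strictly smaller in graded lex order. I will establish linear independence and spanning separately, restricted to each $F_n A$, and then read off the ``in particular'' statement as a corollary.

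For linear independence, suppose $\sum_i c_i [u_i] \in I$ is a relation with pairwise distinct $I$-irreducible words $u_i$ of degree $\leq n$ and some $c_i \neq 0$. Let $u_{i_0}$ be the $<_\glex$-largest $u_i$ with $c_{i_0}\neq 0$. By Lemma \ref{bracketing-leading}, every monomial appearing in $[u_i]$ for $i \neq i_0$ is $\leq_\glex u_i <_\glex u_{i_0}$, whereas $[u_{i_0}]$ contributes the monomial $u_{i_0}$ with coefficient $1$. Hence $\lw(\sum_i c_i [u_i]) = u_{i_0}$ with leading coefficient $c_{i_0}\neq 0$, forcing $u_{i_0}$ to be $I$-reducible, which contradicts its $I$-irreducibility.

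For spanning I induct on $<_\glex$ restricted to words of degree $\leq n$, showing that each class $w + I$ lies in the asserted span. If $w$ is $I$-reducible, I pick $f \in I$ with $\lw(f) = w$ and leading coefficient $1$; all monomials of $f$ are $\leq_\glex w$, so $w - f$ is a $k$-linear combination of words $<_\glex w$ (automatically of degree $\leq \deg(w)$), and the inductive hypothesis applied termwise yields $w + I = (w - f) + I$ in the span. If instead $w$ is $I$-irreducible, Lemma \ref{bracketing-leading} writes $[w] - w$ as a linear combination of words $<_\glex w$ of the same degree, which by induction lie in the span modulo $I$; together with $[w] + I$ itself being in the span, this gives $w + I$ in the span. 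For the final assertion, any $I$-irreducible word $u$ with Lyndon decomposition $u = u_1 \cdots u_r$ must have each $u_i$ being $I$-irreducible (otherwise the $I$-reducibility of some $u_i$ would propagate to $u$ by the compatibility of $<_\glex$ with concatenation from both sides), hence $u_i \in \N_I$; since $[u] = [u_1]\cdots [u_r]$ by the definition of the standard bracketing on non-Lyndon words, the class $[u]+I$ lies in the subalgebra generated by $\{[u]+I : u \in \N_I\}$, and combined with the spanning step this subalgebra is all of $A$. I do not anticipate a real obstacle --- everything is bookkeeping on top of Lemma \ref{bracketing-leading}.
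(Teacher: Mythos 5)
Your proposal is correct and follows essentially the same route as the paper: linear independence via the leading-word argument from Lemma \ref{bracketing-leading}, and spanning by induction on $<_{\glex}$, splitting into the $I$-reducible and $I$-irreducible cases exactly as the paper does. The only difference is that you spell out the ``in particular'' step (that the Lyndon factors of an $I$-irreducible word are themselves $I$-irreducible, using compatibility of $<_{\glex}$ with concatenation), which the paper dismisses as clear; that detail is correct and welcome.
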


\begin{proof}
We prove the first statement, the others are clear.
Firstly, we show that these residue classes are linearly independent. Suppose not, then there exists a polynomial  $f=\sum_{i=1}^r \lambda_i [u_i] \in I$, with $u_i$ pairwise distinct $I$-irreducible words of degree $\leq n$ and $\lambda_i\in k\backslash\{0\}$. We may assume that $u_1> _{\glex} \cdots >_{\glex} u_r$. By Lemma \ref{bracketing-leading}, the leading word of $f$ is $u_1$, which is impossible.

Now we show these residue classes span $F_nA$.  It suffices to show they span the residue classes of all words of degree $\leq n$. We show this by induction on words with respect to the graded lex  order. Clearly, it is true for the empty word, which is the smallest element with respect to $\leq_{\glex}$. Let $u$ be a nonempty word of degree $\leq n$. If $u$ is $I$-reducible, then $u+I= f_u+I $ with $f_u$ a linear combination of words that $<_{\glex} u$; and if $u$ is $I$-irreducible, then $u+I= ([u]+I) + (f_u +I)$ with  $f_u:=u-[u]$, which  by Lemma \ref{bracketing-leading} is also a  linear combination of words which are $<_{\glex} u$. So by the induction hypothesis,  $u+I$ is a linear combination  of  the residue classes of the standard bracketing of  $I$-irreducible words of degree $\leq n$.
\end{proof}

\section{Triangular comultiplications on free algebras}
\label{section-proof-structure-PBW-generator}

Let $k$ be a field of arbitrary characteristic. Given an algebra $A$ and an algebra homomorphism $\Delta_A:A\to A\otimes A$,  one may choose a set of generators $X$ and then lift $\Delta_A$ into an algebra homomorphism $\Delta: k\langle X\rangle \to  k\langle X\rangle \otimes  k\langle X\rangle$. Generally, it is not easy to read from $\Delta$ any interesting information for the ideal $I \subseteq k\langle X\rangle$ of relations of $A$. In this section, we introduce a class of algebra homomorphisms $k\langle X\rangle \to  k\langle X\rangle \otimes  k\langle X\rangle$ and study their properties by the combinatorial properties of Lyndon words and the standard bracketing. An interesting observation is that if $\Delta$ can be chosen to belong this class then the ideal $I$  of relations behaves well. This observation is the key to explore the structure of connected graded Hopf  algebras in the next section.

Throughout, $X$ stands for a well-ordered alphabet and $k\langle X\rangle $ is connected graded with each letter homogeneous of positive degree. The tensor algebra $k\langle X\rangle \otimes k\langle X\rangle$ is graded in the natural way. For a word $w$, recall that $k\langle X\rangle^{<w}$ (resp. $k\langle X\rangle^{\leq w}$) denotes the homogeneous subalgebra of $k\langle X\rangle$ generated by the set of the standard bracketing of Lyndon words which are $<_{\lex} w$ (resp. $\leq_{\lex} w$).

\begin{definition}\label{triangular-comultiplication}\label{definition-triangular-comultiplication}
A \emph{comultiplication} on $k\langle X\rangle$ is  an algebra homomorphism  $\Delta: k\langle X\rangle  \to k\langle X\rangle \otimes k\langle X\rangle$.  A comultiplication $\Delta$ is called \emph{triangular}  if  for each letter $x\in X$,
$$
\Delta(x) ~ \in ~  1\otimes x + x\otimes 1 + \sum_{ i, j  ~> ~0 \atop i+j = \deg (x)} k\langle X\rangle_i^{< x} \otimes k\langle X \rangle_j^{<x} + \big (k\langle X\rangle \otimes k\langle X\rangle \big )_{<\deg(x)}.
$$
If in addition $\Delta$ is graded linear of degree $0$ then it is called \emph{graded triangular}.
\end{definition}

\begin{remark}\label{remark-standard-comul}
The \emph{standard comultiplication} on $k\langle X\rangle$ is defined to be the algebra homomorphism
$$
\Delta_s: k\langle X\rangle  \to k\langle X\rangle \otimes k\langle X\rangle, \quad X \ni x\mapsto 1\otimes x +x\otimes 1.
$$
 It  is  graded triangular by definition. Note that $\Delta_s$ is coassociative,
which clearly does not necessarily  hold for general (triangular) comultiplications.  Moreover, it is easy to check that
\begin{equation}\label{ST}
\Delta_s([u]^n) =   \sum_{p=0}^n \binom{n}{p} ~  [u]^p \otimes [u]^{n-p} \tag{ST}
\end{equation}
for every Lyndon word $u$ and every positive integer $n$.
\end{remark}

\begin{proposition}\label{comultiplication}
Let $\Delta$ be a triangular comultiplication on $k\langle X\rangle$.
Then
 \begin{eqnarray*}
 \Delta([u]^n)  &\in &  \sum_{p=0}^n \binom{n}{p} ~  [u]^p \otimes [u]^{n-p}
   +  \sum_{r, s ~ \geq~ 0 \atop r+s ~<~ n } \sum_{ i, j  ~>~ 0  \atop i+j = (n-r-s) \deg (u)} k\langle X \rangle_i^{< u}\cdot  [u]^r   \otimes k\langle X\rangle_j^{< u} \cdot [u]^s \\
   && + \Big (k\langle X\rangle \otimes k\langle X\rangle \Big )_{< \deg(u^n)}
 \end{eqnarray*}
 for every Lyndon word $u$  and every positive integer $n$.
\end{proposition}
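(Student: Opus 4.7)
The plan is a nested induction: first settle the case $n=1$ by induction on the length of the Lyndon word $u$, then deduce the general case by induction on $n$.

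For $n=1$ with $|u|=1$, the claim is exactly the defining condition of triangular comultiplication. For the inductive step with $|u|\geq 2$, use the Shirshov factorization $u=u_Lu_R$ (so that $u_L>_{\lex}u_R$ are both Lyndon, by Proposition~\ref{fact-Lyndon}(L3)) and the identity $[u]=[[u_L],[u_R]]$. Since $\Delta$ is an algebra homomorphism, $\Delta([u])=[\Delta([u_L]),\Delta([u_R])]$. Writing $\Delta([u_L])=1\otimes[u_L]+[u_L]\otimes 1+A_L$ and $\Delta([u_R])=1\otimes[u_R]+[u_R]\otimes 1+A_R$ via the induction hypothesis, the ``pure'' part of the commutator gives $1\otimes[u]+[u]\otimes 1$, and the remaining cross terms (each involving at least one of $A_L,A_R$) must be shown to lie in
\[
J_u:=\sum_{\substack{i+j=\deg u\\i,j>0}}k\langle X\rangle^{<u}_i\otimes k\langle X\rangle^{<u}_j \;+\; (k\langle X\rangle\otimes k\langle X\rangle)_{<\deg u}.
\]

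Three containments drive the cross-term analysis: (i) $k\langle X\rangle^{<u_R}\subseteq k\langle X\rangle^{<u}$, since $u_R<_{\lex}u$ by Proposition~\ref{fact-Lyndon}(L1); (ii) $k\langle X\rangle^{<u_L}\cap k\langle X\rangle_{<\deg u_L}\subseteq k\langle X\rangle^{<u}$, because any Lyndon $w<_{\lex}u_L$ with $\deg w<\deg u_L$ cannot have $u_L$ as a prefix, and a direct case check on the paper's lex rules then forces $w<_{\lex}u$; (iii) $[[u_L],k\langle X\rangle^{<u_R}]\subseteq k\langle X\rangle^{<u}$, from Lemma~\ref{commutator-subalgebra}. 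The top-degree portions of $A_L,A_R$ pair with $[u_L]\otimes 1,1\otimes[u_L],[u_R]\otimes 1,1\otimes[u_R]$ via these containments and land in the top part of $J_u$; the lower-degree slack portions of $A_L,A_R$ contribute terms of total tensor degree strictly less than $\deg u$, which belong to $(k\langle X\rangle\otimes k\langle X\rangle)_{<\deg u}$.

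For general $n$, induct on $n$ and write $\Delta([u]^n)=\Delta([u])\cdot\Delta([u]^{n-1})$. The pure pieces recombine by Pascal's identity (using that $1\otimes[u]$ and $[u]\otimes 1$ commute in the tensor algebra) into $\sum_{p=0}^n\binom{n}{p}[u]^p\otimes[u]^{n-p}$. The remaining contributions come either from $C:=\Delta([u])-1\otimes[u]-[u]\otimes 1\in J_u$ multiplied against $\Delta([u]^{n-1})$, or from $1\otimes[u]+[u]\otimes 1$ multiplied against the inductive ``error'' piece of $\Delta([u]^{n-1})$; in each tensor factor one must commute $[u]$'s past elements of $k\langle X\rangle^{<u}$ in order to collect the powers $[u]^r,[u]^s$ on the right. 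This is controlled by the identity $[[u],k\langle X\rangle^{<u}]\subseteq k\langle X\rangle^{<u}$, which follows from Lemma~\ref{bracketing-expansion}: for each Lyndon $w<_{\lex}u$, $[[u],[w]]$ is a linear combination of bracketings of Lyndon words $\leq_{\lex}uw$, and $uw<_{\lex}u$ in the paper's convention because $u$ is a proper prefix of $uw$. The main obstacle is the combinatorial bookkeeping in this last step: each commutation creates several new summands, and one must verify that they all remain in the prescribed sum of $k\langle X\rangle^{<u}_i[u]^r\otimes k\langle X\rangle^{<u}_j[u]^s$ together with the lower-degree slack $(k\langle X\rangle\otimes k\langle X\rangle)_{<n\deg u}$.
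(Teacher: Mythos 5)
Your proposal follows essentially the same route as the paper: a nested induction (outer on $n$, inner on $|u|$ for the case $n=1$) using the Shirshov factorization and $\Delta([u])=[\Delta([u_L]),\Delta([u_R])]$, with the same three containments $k\langle X\rangle^{<u_R}\subseteq k\langle X\rangle^{<u}$, $k\langle X\rangle^{<u_L}_{<\deg u_L}\subseteq k\langle X\rangle^{<u}$ and $[[u_L],k\langle X\rangle^{<u_R}]\subseteq k\langle X\rangle^{<u}$, and the same commutation rule $[u]\cdot k\langle X\rangle^{<u}\subseteq k\langle X\rangle^{<u}\cdot[u]+k\langle X\rangle^{<u}$ for the step from $n-1$ to $n$. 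The argument is correct (your direct derivation of $[[u],k\langle X\rangle^{<u}]\subseteq k\langle X\rangle^{<u}$ from Lemma~\ref{bracketing-expansion} is a harmless variant of the paper's appeal to Lemma~\ref{commutator-subalgebra}).
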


\begin{proof}
We show the result by induction on $n$. For $n=1$, we do it by induction on  $l=|u|$. For $l=1$, $u$ is a letter and the formula holds by the assumption.
Assume $l>1$. By induction,
\begin{eqnarray*}
\Delta([u])  &=& [\Delta([u_L]), \Delta([u_R])] \\
&\in& 1\otimes [u] +[u]\otimes 1 \\
&&  +    \sum_{ i, j  ~>~ 0  \atop i+j = \deg (u_R)} [[u_L],k\langle X \rangle_i^{< u_R} ] \otimes k\langle X\rangle_j^{< u_R} +   \sum_{ i, j  ~>~ 0  \atop i+j = \deg (u_R)}  k\langle X\rangle_i^{< u_R} \otimes [[u_L],k\langle X \rangle_j^{< u_R} ] \\
&& +  \sum_{ i, j  ~>~ 0  \atop i+j = \deg (u_L)} k\langle X \rangle_i^{< u_L} \cdot [u_R] \otimes k\langle X\rangle_j^{< u_L} +  \sum_{ i, j  ~>~ 0  \atop i+j = \deg (u_L)} k\langle X \rangle_i^{< u_L}  \otimes k\langle X\rangle_j^{< u_L} \cdot [u_R] \\
&& -  \sum_{ i, j  ~>~ 0  \atop i+j = \deg (u_L)} [u_R]\cdot k\langle X \rangle_i^{< u_L}  \otimes k\langle X\rangle_j^{< u_L} -  \sum_{ i, j  ~>~ 0  \atop i+j = \deg (u_L)} k\langle X \rangle_i^{< u_L}  \otimes [u_R]\cdot k\langle X\rangle_j^{< u_L}  \\
&& +   \sum_{ i, j, i', j'  ~>~ 0  \atop i+j + i'+j' =\deg(u)} k\langle X \rangle_i^{< u_L} \cdot k\langle X \rangle_{i'}^{< u_R}   \otimes k\langle X\rangle_j^{< u_L} \cdot k\langle X \rangle_{j'}^{< u_R} \\
&& - \sum_{ i, j, i', j'  ~>~ 0  \atop i+j + i'+j' =\deg(u)} k\langle X \rangle_i^{< u_R} \cdot k\langle X \rangle_{i'}^{< u_L}   \otimes k\langle X\rangle_j^{< u_R} \cdot k\langle X \rangle_{j'}^{< u_L}  \\
&& + ~ \Big (k\langle X\rangle \otimes k\langle X\rangle \Big )_{< \deg(u)} \\
&\subseteq& 1\otimes [u] + [u] \otimes 1   +    \sum_{ i, j  ~>~ 0  \atop i+j = \deg (u)} k\langle X \rangle_i^{< u}  \otimes k\langle X\rangle_j^{< u} + \big (k\langle X\rangle \otimes k\langle X\rangle \big )_{< \deg(u)}.
\end{eqnarray*}
Here, the inclusion comes from the observations
$$
k\langle X\rangle_{\leq \deg(u_L)}^{< u_L} \subseteq k\langle X\rangle^{< u}, \quad k\langle X\rangle^{< u_R} \subseteq k\langle X\rangle^{< u} \quad  \text{and} \quad [[u_L], k\langle X\rangle^{< u_R}]\subseteq k\langle X\rangle^{< u},
$$
which are followed from  Lemma \ref{lex-order},  Proposition \ref{fact-Lyndon} (L1) and Lemma \ref{commutator-subalgebra} respectively.

Now assume $n>1$.  By induction,
\begin{eqnarray*}
\Delta([u]^n) &=& \Delta([u]) \cdot \Delta ([u]^{n-1}) \\
& \in&   \sum_{p=0}^n \binom{n}{p} ~  [u]^p \otimes [u]^{n-p}    +   \sum_{r, s ~ \geq~ 0 \atop r+s ~<~ n } \sum_{ i, j  ~>~ 0  \atop i+j = (n-r-s)\deg (u)} k\langle X \rangle_i^{< u}\cdot  [u]^{r}   \otimes k\langle X\rangle_j^{< u} \cdot [u]^s\\
&& + \sum_{r, s ~ \geq~ 0 \atop r+s ~<~ n-1 } \sum_{ i, j  ~>~ 0  \atop i+j = (n-1-r-s)\deg (u)} [[u], k\langle X \rangle_i^{< u} ] \cdot  [u]^{r}   \otimes k\langle X\rangle_j^{< u} \cdot [u]^s \\
&&  + \sum_{r, s ~ \geq~ 0 \atop r+s ~<~ n-1 } \sum_{ i, j  ~>~ 0  \atop i+j = (n-1-r-s)\deg (u)} k\langle X \rangle_i^{< u}  \cdot  [u]^{r}   \otimes [[u], k\langle X\rangle_j^{< u}] \cdot [u]^s \\
&& + \big (k\langle X\rangle \otimes k\langle X\rangle \big )_{<\deg(u^n)} \\
&\subseteq&  \sum_{p=0}^n \binom{n}{p} ~  [u]^p \otimes [u]^{n-p}   ~ +~  \sum_{r, s ~ \geq~ 0 \atop r+s ~<~ n } \sum_{ i, j  ~>~ 0  \atop i+j = (n-r-s)\deg (u)} k\langle X \rangle_i^{< u}\cdot  [u]^r   \otimes k\langle X\rangle_j^{< u} \cdot [u]^s \\
&& + \Big (k\langle X\rangle \otimes k\langle X\rangle \Big )_{<\deg(u^n)}.
\end{eqnarray*}
Here, the containment is by the observation $[u]\cdot k\langle X\rangle^{<u} \subseteq [[u], k\langle X\rangle^{<u}] +k\langle X\rangle^{<u} \cdot [u]$; and the inclusion used the observation
$[[u], k\langle X\rangle^{<u}] \subseteq k\langle X\rangle^{<u}$ by Lemma \ref{commutator-subalgebra}.
\end{proof}

The next proposition is the key observation to prove the main results and has an interest in its own right. The following notations are employed. For an ideal $I$ of $k\langle X\rangle$ and a word $w$, we write
\[
k\langle X|I\rangle  \quad \Big(\text{ resp. }  k\langle X|I\rangle^{<w}, \text{ resp. } ~ k\langle X|I\rangle^{\leq w} ~\Big)
\]
for the subalgebra of $k\langle X\rangle$ generated by
$$\{~ [u]~|~ u\in \N_I ~\} \quad \Big ( \text{ resp. } \{~ [u]~|~ u\in \N_I, ~ u<_{\lex}  w ~\}, ~ \text{ resp. } \{~ [u]~|~ u\in \N_I, ~ u \leq_{\lex} w ~\}  ~ \Big).$$
Note that they are all homogeneous subalgebras of $k\langle X\rangle$. We use the notations $k\langle X|I\rangle_n$ and $k\langle X|I\rangle_{\leq n}$ in the obvious sense. Similar convention applies to $k\langle X|I\rangle^{<w}$  and $k\langle X|I\rangle^{\leq w}$.

\begin{proposition}\label{quasi-Lie-imply-LK}
Let $I$ be an ideal of $k\langle X\rangle$ and $A:=k\langle X\rangle/ I$. Assume that there exists a  triangular comultiplication  $\Delta$ on $k\langle X\rangle$ such that $\Delta(I) \subseteq k\langle X\rangle \otimes I + I \otimes k\langle X\rangle$.
\begin{enumerate}
\item For every $I$-reducible Lyndon word $v$,
\[
[v] \in  k\langle X|I\rangle^{<v}_{\deg(v)} + k\langle X|I\rangle_{< \deg(v)} +I.
\]
\item  For every pair of  $I$-irreducible Lyndon words  $u,v\in \N_I$ with $u>_{\lex} v$,
\[
[u][v] -[v] [u] \in k\langle X|I\rangle_{\deg(uv)}^{\leq uv} + k\langle X|I\rangle_{< \deg(uv)} +I.
\]
\item Assume in addition that $k$ is of characteristic $0$. Then the set $\{~[w]+I~|~ w\in \B_I~\}$ form a basis of $A$. Consequently, $\B_I$ equals the set of $I$-irreducible words.
\end{enumerate}
\end{proposition}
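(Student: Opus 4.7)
The three parts are tightly linked: (1) and (2) establish commutation and reduction rules modulo $I$ among the standard bracketings of $I$-irreducible Lyndon words, while (3) packages these rules into a Poincar\'e--Birkhoff--Witt type basis. The characteristic zero hypothesis enters only in (3), through the nonvanishing of binomial coefficients in Proposition \ref{comultiplication} that makes the inductive reduction close correctly.

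For (1) and (2), I would mount a joint induction on the graded lex order---on $v$ for (1), on the product $uv$ for (2). Consider a minimal counterexample. By Lemma \ref{standard-basis-2}, the element $[v]+I$ (resp.\ $([u][v]-[v][u])+I$) expands uniquely as $\sum_\alpha\lambda_\alpha [w_\alpha]+I$ with each $w_\alpha$ an $I$-irreducible word. Each such $w_\alpha$ has a nondecreasing Lyndon factorization $w_\alpha=u_1^\alpha\cdots u_{n_\alpha}^\alpha$ with $u_j^\alpha\in\N_I$, since Lyndon factors of $I$-irreducible words are themselves $I$-irreducible (compatibility of $\leq_\glex$ with concatenation from both sides). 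Focusing on the top graded piece, the goal becomes to show $u_{n_\alpha}^\alpha<_\lex v$ (resp.\ $\leq_\lex uv$). Applying $\Delta$ to both sides and expanding via Proposition \ref{comultiplication} on each $\Delta([u_j^\alpha])$, the hypothesis $\Delta(I)\subseteq I\otimes k\langle X\rangle+k\langle X\rangle\otimes I$ forces the resulting bi-homogeneous pieces to match modulo this ``Hopf kernel''. The combinatorial lemmas of Section~1---in particular Lemmas \ref{lex-order-Lyndon}, \ref{bracketing-leading}, \ref{bracketing-expansion}, \ref{reordering-bracketing}, and \ref{commutator-subalgebra}---control how Lyndon factors interact under bracketing and concatenation. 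A carefully chosen refinement of $\leq_\glex$ on $k\langle X\rangle\otimes k\langle X\rangle$ should detect, from a putative counterexample, a leading bi-homogeneous tensor that cannot be cancelled on the left-hand side nor absorbed into the Hopf kernel, contradicting minimality.

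For (3), I would combine (1) and (2) into an induction on the graded lex order to show that every $[w]+I$ reduces modulo $I$ to a linear combination of $[w']+I$ with $w'\in\B_I$. If the Lyndon decomposition $w=w_1\cdots w_n$ has some $w_i\notin\N_I$, apply (1) to rewrite $[w_i]$ modulo $I$ in terms of strictly lex-smaller generators (plus lower-degree correction) and invoke the induction; otherwise $w\in\B_I$ already. Intermediate unordered products $[v_1]\cdots[v_m]$ with $v_j\in\N_I$ are sorted into nondecreasing order by iterating (2), each swap producing only strictly smaller graded-lex terms that the induction handles. This shows $\{[w]+I:w\in\B_I\}$ spans $A$. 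Since by Lemma \ref{standard-basis-2} the sub-collection $\{[v]+I:v\text{ is }I\text{-irreducible}\}\subseteq\{[w]+I:w\in\B_I\}$ is already a basis of $A$, the containment must be equality---a strictly larger spanning set cannot remain linearly independent---so $\B_I$ coincides with the set of $I$-irreducible words and the full collection is a basis.

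The main obstacle is the inductive term-tracking in (1) and (2): expanding $\Delta$ on both sides of the congruence produces many bi-homogeneous terms, and isolating a leading contribution that survives modulo the Hopf kernel while correctly accounting for all cross terms between the $u_j^\alpha$'s requires delicate Lyndon-combinatorial bookkeeping. This is precisely why the paper defers the proof of Proposition \ref{quasi-Lie-imply-LK} to a dedicated final section.
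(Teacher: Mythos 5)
Your outline for part (3) contains a genuine logical gap, and it sits exactly where the real difficulty of the proposition lies. Granting (1) and (2), your reduction argument shows that $\{\,[w]+I \mid w\in\B_I\,\}$ \emph{spans} $A$; but the claim to be proved is that it is a \emph{basis}, i.e. that it is linearly independent, equivalently that every $w\in\B_I$ (for instance every power $u^n$ of an $I$-irreducible Lyndon word $u$) is itself $I$-irreducible. Your closing deduction --- ``a strictly larger spanning set cannot remain linearly independent, so the containment must be equality'' --- does not follow: a spanning set that strictly contains a basis is simply a redundant spanning set, and nothing you have established rules that out. Indeed the statement is \emph{false} in characteristic $p$: the paper's Lemma \ref{height}(2) shows that an $I$-irreducible Lyndon word can have finite height $p^s$, so $\B_I$ can strictly exceed the set of $I$-irreducible words and the family can be linearly dependent. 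Since your argument never uses characteristic $0$ in any substantive way (the reference to ``nonvanishing of binomial coefficients'' is not cashed out), it would prove the false positive-characteristic statement as well. The paper's route is structurally different: it first introduces the heights $h_I(u)$ and the set $\C_I$ of nondecreasing products $u_1^{e_1}\cdots u_r^{e_r}$ with $e_i<h_I(u_i)$, proves in \emph{arbitrary} characteristic that $\{[w]+I\mid w\in\C_I\}$ is a basis (Lemma \ref{quasi-Lie-basis}), and only then uses characteristic $0$ to show every $u\in\N_I$ has infinite height (Lemma \ref{height}(1)), whence $\C_I=\B_I$. The independence and the height computations are both obtained by constructing explicit linear functionals $\rho$ (resp. $\phi\otimes\psi$) vanishing on $I$ and on all but one basis element, applying $\id\otimes\rho$ (resp. $\phi\otimes\psi$) to $\Delta(f)$ for $f\in I$, and exploiting $\Delta(I)\subseteq I\otimes k\langle X\rangle+k\langle X\rangle\otimes I$ to force a contradiction via leading words; the identity $(\phi\otimes\psi)(\Delta(g))=n\cdot 1_k$ is where characteristic $0$ actually bites.

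Your sketch of (1) and (2) is also too vague to carry weight as written (``a carefully chosen refinement of $\leq_\glex$ on $k\langle X\rangle\otimes k\langle X\rangle$ should detect \dots a leading bi-homogeneous tensor''), and the order of dependence is inverted relative to what seems workable: in the paper, part (1) is deduced from the height machinery (Lemma \ref{triangular-soft}, whose proof already requires the $\C_I$-basis of Lemma \ref{quasi-Lie-basis} in order to define the detecting functional and to apply Lemma \ref{killing-functional}), so one cannot cleanly prove (1) and (2) first and then bootstrap to (3). Finally, note that part (2) does not need a second round of the $\Delta$-functional analysis at all: it follows from part (1) by induction on $|u|$ via the identity $[[u],[v]]=[[u_L],[[u_R],[v]]]-[u_R][[u_L],[v]]+[[u_L],[v]][u_R]$ together with Lemma \ref{commutator-subalgebra} and Proposition \ref{fact-Lyndon} (L2)--(L4). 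To repair your proposal you would need, at minimum, to formulate and prove the height statement and the $\C_I$-basis theorem, which is where the substance of the proposition resides.
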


The  proof of the above proposition is complicated. To make the reading more fluent, it will be addressed in Section \ref{section-quasi-Lie-imply-LK} through a number of technical lemmas.

\begin{remark}
Ideals of $k\langle X\rangle$ satisfying  the conditions in Proposition \ref{quasi-Lie-imply-LK} are studied extensively in \cite{ZSL3}. It turns out that the quotient algebras of $k\langle X\rangle$ defined by such ideals share many fundamental ring-theoretic and  homological properties.
\end{remark}

To understand Proposition \ref{quasi-Lie-imply-LK} better, we reclaim the following well-known fact  as its consequence. Let ${\rm Lie}(X)$ be the free Lie algebra on $X$. It is the subspace of $k\langle X\rangle$ spanned by the standard bracketing of Lyndon words. Elements of ${\rm Lie}(X)$ are called \emph{Lie polynomials}. Given a set $F$ of Lie polynomials, there is a Hopf algebra $H:=k\langle X\rangle/ (F)$ with comultiplication $\Delta_H$ the map induced by $\Delta_s$ and a Lie algebra $\mathfrak{g}:={\rm Lie} (X)/(F)_L$, where $(F)_L$ denote the Lie ideal of ${\rm Lie} (X)$ generated by $F$. It is well-known that $H$ is isomorphic as a Hopf algebra to the universal enveloping algebra of $\mathfrak{g}$. We refer \cite[p.168, Theorem 7]{J} for the special case where $F$ is the empty set, which is attributed to E. Witt by N. Jacobson; the general case follows easily.

\begin{proposition}\label{Lie-ideal}
Let $I$ be an ideal of $k\langle X\rangle$.
\begin{enumerate}
\item[(1)] If $I$ is generated by Lie polynomials then $\Delta_s(I) \subseteq I\otimes k\langle X\rangle +  k\langle X\rangle \otimes I$.
\item[(2)]  The converse of (1) holds, provided that $k$ is of characteristic $0$.
\end{enumerate}
\end{proposition}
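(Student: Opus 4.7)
For part (1), the key observation will be that every Lie polynomial is primitive with respect to $\Delta_s$: letters are primitive by definition of $\Delta_s$, and the bracket of two primitive elements is again primitive, so the whole Lie subalgebra $\Lie(X)\subseteq k\langle X\rangle$ consists of primitives. Thus if $I=(F)$ with $F\subseteq \Lie(X)$, any element of $I$ is a finite sum $\sum_i a_if_ib_i$ with $a_i,b_i\in k\langle X\rangle$ and $f_i\in F$, and since $\Delta_s$ is an algebra homomorphism and $I$ is two-sided, $\Delta_s(a_if_ib_i)=\Delta_s(a_i)(1\otimes f_i+f_i\otimes 1)\Delta_s(b_i)\in I\otimes k\langle X\rangle+k\langle X\rangle\otimes I$.

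For part (2), set $\mathfrak{g}:=\Lie(X)$ and $\mathfrak{h}:=I\cap\mathfrak{g}$. Using $[f,g]=fg-gf$ together with the two-sided ideal property of $I$, one checks $[\mathfrak{g},\mathfrak{h}]\subseteq \mathfrak{h}$, so $\mathfrak{h}$ is a Lie ideal of $\mathfrak{g}$. Let $J$ be the two-sided ideal of $k\langle X\rangle$ generated by $\mathfrak{h}$; by part (1), $J$ is a bi-ideal. Under the identification $k\langle X\rangle=U(\mathfrak{g})$ recalled just before the statement, one gets a Hopf-algebra isomorphism $k\langle X\rangle/J\cong U(\mathfrak{g}/\mathfrak{h})$. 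Setting $\bar H:=U(\mathfrak{g}/\mathfrak{h})$ and $\bar I:=I/J$, the goal reduces to showing $\bar I=0$, since then $I=J$ is generated by Lie polynomials.

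The first step will be to verify $\bar I\cap P(\bar H)=0$: in characteristic zero, Friedrichs' criterion gives $P(\bar H)=\mathfrak{g}/\mathfrak{h}$, and if $\ell+J\in\bar I$ for some $\ell\in\mathfrak{g}$ then $\ell\in I\cap\mathfrak{g}=\mathfrak{h}$, so $\ell+J=0$. The main obstacle will then be to upgrade this to $\bar I=0$. For this I plan a minimum-degree argument along the coradical (i.e. PBW) filtration $\bar H=\bigcup_n F_n$, with $F_0=k$ and $F_1=k\oplus P(\bar H)$: if $\bar I\neq 0$, pick $0\neq\bar f\in\bar I$ with minimal $n$ such that $\bar f\in F_n$; since $\bar I\subseteq \ker\epsilon$ meets $P(\bar H)$ trivially, $n\geq 2$. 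The defining property of the coradical filtration on a connected coalgebra gives $\Delta_s(\bar f)-1\otimes\bar f-\bar f\otimes 1\in F_{n-1}\otimes F_{n-1}$, while the coideal property of $\bar I$ forces this same element to lie in $\bar I\otimes\bar H+\bar H\otimes\bar I$. By minimality $\bar I\cap F_{n-1}=0$, so $F_{n-1}\hookrightarrow\bar H/\bar I$ and hence $F_{n-1}\otimes F_{n-1}$ intersects $\bar I\otimes\bar H+\bar H\otimes\bar I$ trivially; thus $\bar f$ is primitive, contradicting $\bar I\cap P(\bar H)=0$. This forces $\bar I=0$ and completes the proof.
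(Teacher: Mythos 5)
Your part (1) is in substance the paper's own argument: the paper quotes the formula (\ref{ST}) (with $n=1$ it says that the standard bracketings $[u]$ of Lyndon words, hence all of $\Lie(X)$, are primitive for $\Delta_s$), while you observe directly that letters are primitive and that the commutator of primitives is primitive; the rest is the same use of the two--sided ideal property. For part (2) you take a genuinely different route. The paper deduces (2) from its main technical result, Proposition \ref{quasi-Lie-imply-LK}(3): the PBW--type basis $\{[w]+I\mid w\in\B_I\}$ is used to build the dual functionals $\tau_{u_1,\dots,u_n}$, which are applied to $\Delta_s(g_v)$ for the reduced element $g_v=[v]+\sum\lambda_{w_1,\dots,w_n}[w_1]\cdots[w_n]\in I$ attached to each $I$-reducible Lyndon word $v$; the binomial coefficients coming from (\ref{ST}) then kill every non-Lie term, and Gr\"obner basis theory finishes the proof. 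You instead run the classical enveloping-algebra argument: set $\mathfrak h=I\cap\Lie(X)$, pass to $J=(\mathfrak h)$, identify $k\langle X\rangle/J$ with $U(\mathfrak g/\mathfrak h)$ via the Witt identification recalled just before the statement, and prove that a coideal of a connected Hopf algebra which is contained in $\ker\varepsilon$ and meets the primitives trivially must vanish, via a minimal-degree induction along the coradical filtration; your key step --- that $F_{n-1}\otimes F_{n-1}$ meets $\bar I\otimes\bar H+\bar H\otimes\bar I=\ker(\pi\otimes\pi)$ trivially because $\pi$ is injective on $F_{n-1}$ --- is correct. Your proof is independent of the Lyndon-word machinery and of Proposition \ref{quasi-Lie-imply-LK}, at the cost of invoking standard characteristic-zero facts about $U(\mathfrak g)$ (Friedrichs' criterion, the identification of the coradical and PBW filtrations), whereas the paper's proof is designed to showcase Proposition \ref{quasi-Lie-imply-LK} and yields slightly more, namely an explicit Gr\"obner basis of $I$ consisting of Lie polynomials. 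One small point, common to both arguments: part (2) is false as literally stated for $I=k\langle X\rangle$ (an ideal generated by Lie polynomials always lies in $\ker\varepsilon$), so properness of $I$ is implicitly assumed; under that implicit assumption your assertion $\bar I\subseteq\ker\varepsilon$ still deserves a word for non-homogeneous $I$, though it is immediate in the homogeneous case, which is the only one the paper ever uses.
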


\begin{proof}
Part (1) is by the formula (\ref{ST}) in Remark \ref{remark-standard-comul}.

Next we show Part (2).  Assume $k$ is of characteristic $0$ and $\Delta_s(I) \subseteq I\otimes k\langle X\rangle +  k\langle X\rangle \otimes I$. By Proposition \ref{quasi-Lie-imply-LK} (3), for any nondecreasing nonempty sequence $u_1\leq_{\lex} u_2 \leq_{\lex} \cdots \leq_{\lex} u_n$ in $\N_I$, one may define a functional $\tau_{u_1,\cdots, u_n} : k\langle X\rangle \otimes k\langle X\rangle \to k $ as follows.
\begin{itemize}
\item $\tau_{u_1,\cdots, u_n}(I\otimes k\langle X\rangle +k\langle X\rangle \otimes I) =0$;
\item $\tau_{u_1,\cdots, u_n}([w]\otimes [w']) =0$ for $(w,w')\in (\B_I\times \B_I)  \backslash \{ ([u_1], [u_2]\cdots [u_n])\}$;
\item $\tau_{u_1,\cdots, u_n} ([u_1]\otimes [u_2]\cdots [u_n]) =1$.
\end{itemize}
One may define the functional  $\tau_{\emptyset}$ as well. Now let $v$ be an arbitrary $I$-reducible Lyndon word. Applying Proposition \ref{quasi-Lie-imply-LK} (3), there is a (unique) polynomial $g_v\in I$ of the following form
\[
g_v= [v] +\sum_{w_1,\cdots, w_n \in \N_I \atop w_1\leq_{\lex } \cdots \leq_{\lex} w_n} \lambda_{w_1,\cdots, w_n} [w_1][w_2]\cdots [w_n].
\]
By the formula (\ref{ST}), one has
\[
0=  (\tau_{u_1,\cdots, u_n}\circ \Delta_s)(g_v)  =  p\cdot \lambda_{u_1,\cdots, u_n}, \quad n\geq 2,
\]
where $p$ is the number of $i$ in $\{1,\cdots, n\}$ such that $u_i = u_1$; and
note that
\[
 [v]=g_v- \sum_{w_1,\cdots, w_n \in \N_I,\, n\geq 1 \atop w_1\leq_{\lex } \cdots \leq_{\lex} w_n} \lambda_{w_1,\cdots, w_n} [w_1][w_2]\cdots [w_n]-\lambda_{\emptyset},
\]
we have
\[
0 = (\tau_{\emptyset} \circ \Delta_s ) (g_v) =  \tau_\emptyset (1\otimes [v] + [v]\otimes 1) +\lambda_\emptyset = -\lambda_\emptyset.
\]
Thus $g_v$ is a Lie polynomial with leading word $v$ for any $I$-reducible Lyndon word. By standard  Gr\"{o}bner basis theory (see \cite{Mora}), $I$ is generated by $\{~g_v~ |~ v  \text{ is an $I$-reducible Lyndon word} ~ \}$.
\end{proof}

\section{The structure of connected (graded) Hopf algebras}
\label{section-Hopf-algebra}

In this section we study the structure of connected (graded) Hopf algebras over a field of characteristic $0$.  In addition to the main theorems stated in the introduction, we also observe  some keystone facts about connected Hopf algebras over a field of characteristic $0$.

Let us begin by recalling some notations and definitions on Hopf algebras.  For a general Hopf algebra $H$, the usual notations $\Delta_H$, $\varepsilon_H$ and $S_H$ are employed to denote  the comultiplication,  counit  and  antipode of $H$ respectively. The \emph{coradical} of $H$ is defined to be the sum of all simple subcoalgebras of $H$. It is denoted by $H_{(0)}$. Also,  the \emph{coradical filtration} of $H$ (\cite[Section 5.2]{Mont}) is denoted by $\{H_{(n)}\}_{n\geq0}$.  Note that the notations for coradical and coradical filtration we used  differ from that of \cite{Mont}.
A Hopf algebra is called \emph{connected} if  its coradical  is  one-dimensional.

By a \emph{graded Hopf algebra} we mean a Hopf algebra $H$ equipped with a grading $H=\bigoplus_{n\geq 0}H_n$  such that $H$ is both a graded algebra and a graded coalgebra, and the antipode  preserves the given grading \cite[p.237, Definitions]{Sw}. Such a graded Hopf algebra is called \emph{connected} if $H_0=k$.
Clearly connected graded Hopf algebras are connected Hopf algebras. For a connected Hopf algebra $H$, the coradical filtration $\{H_{(n)}\}_{n\geq 0}$ is a Hopf algebra filtration \cite[p. 62]{Mont}, whence
the associated graded space $\gr_c(H):= \bigoplus_{n\geq 0} H_{(n)}/H_{(n-1)}$ is a connected graded Hopf algebra in the natural way. Moreover, it is easy to show that, when $H$ is a connected graded Hopf algebra, one has $\ker \varepsilon_H= \oplus_{n\geq 1} H_n$; and $$\Delta_H(x)\in x\otimes 1 + 1\otimes x + \bigoplus_{i=1}^{n-1}H_i\otimes H_{n-i}$$for an element $x$ of $H_n$, so that $H_n\subseteq H_{(n)}$ for all $n\geq 0$.

\begin{proposition}\label{connected-field-extension}
Let $H$ be a Hopf algebra over a field $k$. Let $k'$ be a field extension of $k$.
\begin{enumerate}
\item If $H$ is connected then $H\otimes k'$ is connected as a Hopf algebra over $k'$.
\item The converse of (1) holds when $k$ is perfect.
\end{enumerate}
\end{proposition}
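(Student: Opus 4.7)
My approach is to reduce both statements to finite-dimensional coalgebra/algebra duality, using that the coradical of a finite-dimensional coalgebra $D$ is dual to the semisimple quotient $D^{*}/\mathrm{rad}(D^{*})$.

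For part (1), I would argue that any simple subcoalgebra $E'$ of $H\otimes k'$ is finite-dimensional, hence contained in $D\otimes k'$ for some nonzero finite-dimensional subcoalgebra $D\subseteq H$ (any $D$ containing a finite spanning set of $E'$'s first tensor components works). Since $H$ is connected, $D_{(0)} = D\cap H_{(0)} = k\cdot 1$, so $D^{*}$ is local with residue field $k$. Its maximal ideal $\mathfrak{m}$ is nilpotent, whence $\mathfrak{m}\otimes k'$ is a nilpotent two-sided ideal of $D^{*}\otimes k' = (D\otimes k')^{*}$ with quotient $k\otimes k' = k'$, a field. Therefore $(D\otimes k')^{*}$ is local with residue field $k'$, which dualizes to $(D\otimes k')_{(0)} = k'\cdot 1$. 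Being a simple subcoalgebra of $D\otimes k'$, $E'$ must coincide with $k'\cdot 1$, so $H\otimes k'$ is connected. No hypothesis on $k$ enters here.

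For part (2), assume $H\otimes k'$ is connected and $k$ is perfect. It suffices to show that every simple subcoalgebra $E\subseteq H$ equals $k\cdot 1$. Write $E^{*} = M_n(D')$ by Wedderburn, with $D'$ a finite-dimensional division algebra over $k$; then $(E\otimes k')^{*} = M_n(D'\otimes k')$. Since $E\otimes k'$ is a nonzero subcoalgebra of $H\otimes k'$, one has $(E\otimes k')_{(0)} = (E\otimes k')\cap (H\otimes k')_{(0)} = k'\cdot 1$, whose dual is one-dimensional over $k'$. Comparing with the dimension of $M_n\bigl(D'\otimes k'/\mathrm{rad}(D'\otimes k')\bigr)$ forces $n = 1$ and $\dim_{k'}\bigl(D'\otimes k'/\mathrm{rad}(D'\otimes k')\bigr) = 1$. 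Perfectness enters here: since $k$ is perfect, the center $Z$ of $D'$ is separable over $k$, and $D'$ is central simple (hence separable) over $Z$, so $D'$ is separable over $k$; consequently $D'\otimes k'$ is semisimple, its radical vanishes, and $\dim_k D' = 1$, giving $E = k\cdot 1$.

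The only delicate point is the use of perfectness in (2): without it, $D'\otimes k'$ could carry a nontrivial Jacobson radical — for instance if $k'/k$ is purely inseparable — and one could have $\dim_k D' > 1$ while $D'\otimes k'/\mathrm{rad}$ is still one-dimensional, breaking the dimension count. Everything else is routine: the passage to finite-dimensional subcoalgebras, Wedderburn theory, and standard coalgebra/algebra duality.
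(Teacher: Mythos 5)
Your proof is correct, and for part (2) it is essentially the paper's argument. The only real divergence is in part (1): the paper disposes of it in one line by observing that $\{H_{(n)}\otimes k'\}_{n\geq 0}$ is a coalgebra filtration of $H\otimes k'$ with bottom term $k'\cdot 1$, and invoking the standard lemma (Montgomery, Lemma 5.3.4) that the coradical is contained in the degree-zero piece of any coalgebra filtration; your reduction to a finite-dimensional subcoalgebra $D$ containing $E'$ and the locality of $(D\otimes k')^{*}$ proves the same containment by hand, trading a citation for an elementary duality computation. For part (2) both proofs reduce to a simple subcoalgebra $C\subseteq H$, dualize, use that $(C\otimes k')^{*}$ is local with nilpotent maximal ideal of codimension one, and use perfectness of $k$ to make the relevant $k$-algebra separable so that its base change to $k'$ is semisimple. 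The packaging differs slightly: the paper avoids Wedderburn's theorem by noting that the simple algebra $C^{*}$ injects into $(C^{*}\otimes k')/\mathfrak{m}\cong k'$, hence is a commutative field, hence a separable extension of $k$, so $C^{*}\otimes k'$ is reduced and $\mathfrak{m}=0$; you instead write $C^{*}=M_n(D')$ and compare dimensions of the semisimple quotient to force $n=1$ and $\dim_k D'=1$. Your remark on exactly where perfectness is needed (a purely inseparable $D'$ would give $D'\otimes k'$ a nontrivial nilradical with one-dimensional quotient) matches the role it plays in the paper's proof.
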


\begin{proof}
 (1) Note that $\{H_{(n)}\otimes k'\}_{n\geq 0}$ is a coalgebra  filtration of $H\otimes k'$. By \cite[Lemma 5.3.4]{Mont}, the result follows immediately.

 (2) Now assume $k$ is perfect and $H':=H\otimes k'$ is connected as a Hopf algebra over $k'$. Let $C$ be any simple subcoalgebra of $H$. So $C$ is finite dimensional and $C':=C\otimes k'$ is a finite dimensional subcoalgebra of $H'$. Since $H'$ is connected, one gets $C_{(0)}'=k'$. It follows by \cite[Lemma 5.1.4]{Mont} that $C^*=\Hom_k(C,k)$ is a finite dimensional simple algebra over $k$ and $C^*\otimes k' \cong \Hom_{k'}(C',k')$ is a finite dimensional local algebra  over $k'$ with maximal ideal $\mathfrak{m}:=(C'_{(0)})^{\perp}$, which is nilpotent and of codimension one.
Now consider the composition of the natural $k$-algebra homomorphisms $$C^*\to C^*\otimes k' \to (C^*\otimes k')/\mathfrak{m} \cong k'.$$ Since $C^*$ is simple, this map is injective. So $C^*$ is a commutative simple algebra over $k$.  Therefore, $C^*$ is a field extension of $k$, which is of course seperable because $k$ is perfect. By the theorem and the second corollary in \cite[Section 6.2]{Wat}, $C^*\otimes k'$ is reduced and hence $\mathfrak{m}=0$. So $\dim_{k'} C' =\dim_{k'} C^*\otimes k' =1$, and therefore $C'=k'$. Consequently, $C=k$. Thus, $H$ is connected.
\end{proof}

The following theorem (i.e. Theorem \ref{main-structure-PBW-generator}) is the main result of the paper. As a matter of fact, the definitions and results developed in the last two sections are designed to prove it.

\begin{theorem}\label{structure-PBW-generator}
Assume that the base field $k$ is of characteristic $0$. Let $H$ be a connected graded Hopf algebra. Then there exists an indexed family $\{z_\gamma\}_{\gamma\in \Gamma}$ of homogeneous elements of $H$ of positive degree and a total order $\leq$ on  $\Gamma$ satisfying the following conditions:
\begin{enumerate}
\item for every index  $\gamma \in \Gamma$,
\[
\Delta_H(z_\gamma) \in 1\otimes z_\gamma +z_\gamma\otimes 1 + \bigoplus_{i,j>0,~ i+j = \deg(z_\gamma)} (H^{<\gamma})_i \otimes (H^{<\gamma})_j,
\]
where $H^{<\gamma}$ denotes the subalgebra of $H$ generated by $\{~ z_\delta~|~ \delta \in \Gamma,~ \delta<\gamma ~\}$;
\item  for every pair of indexes
$\gamma, \delta \in \Gamma$ with $\delta<\gamma$,
$$z_\gamma z_\delta -z_\delta z_\gamma \in (H^{<\gamma})_{\deg(z_\gamma z_\delta)};$$
\item the set $\{~ z_{\gamma_1} \cdots z_{\gamma_n} ~|~ n\geq0,~ \gamma_1,\cdots, \gamma_n \in \Gamma, ~ \gamma_1\leq \cdots \leq \gamma_n ~\}$ is a basis of $H$.
\end{enumerate}
Moreover, $\gkdim H$ is finite if and only if $\Gamma$ is finite; and in this case $\gkdim H = \#(\Gamma)$.
\end{theorem}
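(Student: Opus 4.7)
The plan is to realize $H$ as a quotient $k\langle X\rangle/I$ of a free algebra equipped with a graded triangular comultiplication, and then read off the desired family from $I$-irreducible Lyndon words via Proposition \ref{quasi-Lie-imply-LK}. To prepare, I would first fix a set $X$ of homogeneous generators of $H$ of positive degrees (for instance, homogeneous lifts of a basis of $H^+/(H^+)^2$, where $H^+=\bigoplus_{n\geq 1}H_n$), grade $k\langle X\rangle$ so that the natural surjection $\pi\colon k\langle X\rangle\twoheadrightarrow H$ is graded of degree zero, and well-order $X$ so that letters of smaller degree precede those of larger degree. Put $I:=\ker\pi$, a graded ideal.

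Next I would lift $\Delta_H$ to a graded triangular comultiplication $\Delta$ on $k\langle X\rangle$. For every letter $x\in X$ of degree $n$, connected gradedness gives $\Delta_H(\pi(x))\in \pi(x)\otimes 1+1\otimes \pi(x)+\sum_{i,j>0,\,i+j=n}H_i\otimes H_j$; any homogeneous preimage of the middle part in $\bigoplus_{i,j>0,\,i+j=n}k\langle X\rangle_i\otimes k\langle X\rangle_j$ may be chosen to involve only letters of degree $<n$, hence only letters $<x$, so it lies in the subspace $\sum_{i,j>0,\,i+j=n}k\langle X\rangle_i^{<x}\otimes k\langle X\rangle_j^{<x}$. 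Defining $\Delta(x)=1\otimes x+x\otimes 1+(\text{such a lift})$ and extending multiplicatively produces a graded triangular comultiplication with $(\pi\otimes\pi)\circ\Delta=\Delta_H\circ\pi$ by construction, whence $\Delta(I)\subseteq I\otimes k\langle X\rangle+k\langle X\rangle\otimes I$. Thus the hypotheses of Proposition \ref{quasi-Lie-imply-LK} are in force.

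I would then take $\Gamma:=\N_I$ ordered by $\leq_{\lex}$, and set $z_u:=[u]+I$ for $u\in \N_I$. Part~(3) of Proposition \ref{quasi-Lie-imply-LK} (which invokes the characteristic zero hypothesis) gives condition~(3), since elements of $\B_I$ are in bijection with nondecreasing sequences in $\N_I=\Gamma$. Condition~(2) follows from Proposition \ref{quasi-Lie-imply-LK}(2): for $u>_{\lex}v$ in $\N_I$, $[u][v]-[v][u]$ reduces modulo $I$ into the subalgebra generated by $\{z_w:w\in\N_I,\,w\leq_{\lex}uv\}$, and since $u$ is a proper prefix of $uv$ we have $uv<_{\lex}u$, so every such $z_w$ lies in $H^{<u}$. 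For condition~(1), Proposition \ref{comultiplication} together with the gradedness of $\Delta$ (which kills the displayed lower-degree error term) gives
\[
\Delta([u])\in 1\otimes [u]+[u]\otimes 1+\sum_{i,j>0,\,i+j=\deg(u)}k\langle X\rangle_i^{<u}\otimes k\langle X\rangle_j^{<u},
\]
so it suffices to show $\pi(k\langle X\rangle_i^{<u})\subseteq (H^{<u})_i$ for all $i$. For each bracketing factor $[v]$ with $v$ Lyndon and $v<_{\lex}u$: if $v\in\N_I$ then $\pi([v])=z_v\in H^{<u}$; if $v$ is $I$-reducible, Proposition \ref{quasi-Lie-imply-LK}(1) writes $[v]$ modulo $I$ as an element of $k\langle X|I\rangle^{<v}_{\deg(v)}+k\langle X|I\rangle_{<\deg(v)}$, and homogeneity of $[v]$ forces the second summand to lie in $I$, so $\pi([v])$ lands in the subalgebra generated by $\{z_w:w\in\N_I,\,w<_{\lex}v\}\subseteq H^{<v}\subseteq H^{<u}$.

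Finally, the GK-dimension statement follows from condition~(3): when $\#(\Gamma)=d<\infty$, the basis is indexed by multisets on $\Gamma$ weighted by $\{\deg(z_\gamma)\}_{\gamma\in\Gamma}$, giving polynomial growth of exact order $d$; when $\Gamma$ is infinite, restricting to any finite subset of size $d$ already yields polynomial growth of order at least $d$, forcing $\gkdim H=\infty$. I expect the most delicate step to be the verification of condition~(1): the subalgebra $k\langle X\rangle^{<u}$ of the free algebra is generated by the standard bracketings of \emph{all} Lyndon words $<_{\lex}u$, including the $I$-reducible ones, whereas $H^{<u}$ is generated only by the $z_w$ with $I$-irreducible $w<_{\lex}u$; bridging this gap is exactly the content of Proposition \ref{quasi-Lie-imply-LK}(1) combined with the homogeneity argument sketched above.
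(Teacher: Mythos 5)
Your proposal is correct and follows essentially the same route as the paper: present $H$ as $k\langle X\rangle/I$ with a degree-respecting well-order on $X$, lift $\Delta_H$ to a graded triangular comultiplication preserving $I$, take $\Gamma=\N_I$ with $z_u=[u]+I$ ordered by $\leq_{\lex}$, and derive (1)--(3) from Propositions \ref{comultiplication} and \ref{quasi-Lie-imply-LK}, with the same Hilbert-series count for the GK-dimension claim. Your expanded justification of condition (1) (using Proposition \ref{quasi-Lie-imply-LK}(1) plus homogeneity to replace $k\langle X\rangle^{<u}$ by $k\langle X|I\rangle^{<u}$ modulo $I$) is exactly the step the paper compresses into the identity $k\langle X|I\rangle^{<w}+I=k\langle X\rangle^{<w}+I$.
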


\begin{proof}
By choosing a set of homogeneous generators of positive degrees, we may fix an alphabet $X$ of graded variables  and consider $H$ as a graded quotient algebra  $k\langle X\rangle/I$ for some homogeneous ideal $I$. For any $f\in k\langle X\rangle$, write $\overline{f} $ for the coset of $f$ in $H$.  By the counitality, one has
\[
\Delta_H(\overline{x}) \in 1\otimes \overline{x} +\overline{x} \otimes 1 + \sum_{i,j>0 ~ i+j =\deg(x)}H_i \otimes H_j, \quad x\in X.
\]
It follows that one may lift  $\Delta_H$  to  a  graded algebra homomorphism $\Delta:k\langle X\rangle\to k\langle X\rangle \otimes k\langle X\rangle$ which satisfies the following condition:
\begin{equation}
\Delta(x) \in 1\otimes x + x \otimes 1 + \sum_{i,j>0 ~ i+j =\deg(x)}k\langle X\rangle_i \otimes k\langle X\rangle_j, \quad x\in X. \tag{${\rm TC}$}
\end{equation}
Note that  $\Delta(I) \subseteq I\otimes k\langle X\rangle +k\langle X\rangle \otimes I$ by the construction.
Further, equip  $X$ with a well order as follows.  First fix a well-order $\leq_r$ on $X_r:=\{~x\in X~|~ \deg(x) =r ~\}$ for each integer $r\geq 1$; then for $x_1,x_2\in X$,
\begin{eqnarray*}
\label{definition-deglex}
x_1 \leq x_2\,\, \Longleftrightarrow \,\, \left\{
\begin{array}{llll}
\deg(x_1)<\deg(x_2), \quad \text{or} && \\
\deg(x_1) =\deg(x_2)\, \text{ and } x_1\leq_{\deg(x_1)} x_2.
\end{array}\right.
\end{eqnarray*}
It is easy to read from (TC) that $\Delta$ is a graded triangular comultiplication. Now let $\Gamma:=\N_I$ and
\[
z_\gamma :=[\gamma] +I \in k\langle X\rangle/I = H
\]
for any $\gamma\in \Gamma$. Moreover, let $\leq$ be the restriction of $\leq_{\lex}$ on $\N_I$.
By Proposition \ref{quasi-Lie-imply-LK} (1) one has
$$k\langle X|I\rangle^{<w} +I= k\langle X\rangle^{<w}+I$$
for every Lyndon word $w$.  Part (1) then follows immediately from Proposition \ref{comultiplication}.
Part (2) and Part (3) are direct consequences of Proposition \ref{quasi-Lie-imply-LK} (2) and Proposition \ref{quasi-Lie-imply-LK} (3) respectively.

Next we show the last statement. For any finite subset $\Xi$ of $\Gamma$, let $H_\Xi$ be the  subalgebra of $H$  generated by $\{~z_\xi ~|~ \xi \in \Xi~\}$. For any integer $n$, let $d_\Xi(n)$ be the number of nondecreasing sequences $(\gamma_1,\cdots, \gamma_p)$ in $\Xi$ such that  $\deg(z_{\gamma_1} \cdots z_{\gamma_p})=n$. By Part (3) and a simple combinatorial argument,
\begin{equation*}
 \sum_{n\geq0} \dim(H_{\Xi})_n ~ t^n \geq \sum_{n\geq 0} d_\Xi(n) ~ t^n= \prod_{u \in \Xi} \big (1-t^{\deg (u)}  \big )^{-1},
\end{equation*}
where  the inequality means that the difference of the series  has no negative coefficients. Consequently,
\begin{equation}\label{Hilbert-GKdim}
\gkdim(H_\Xi) = \limsup_{n\to \infty} \log_n \sum_{i\leq n} \dim(H_\Xi)_i \geq \limsup_{n\to \infty} \log_n \sum_{i\leq n} d_\Xi(i)= \#(\Xi). \tag{$*$}
\end{equation}
Here, the first equality is by \cite[Lemma 6.1 (b)]{KL} and the second equality is by \cite[Proposition 2.21]{ATV2}.
It follows that if $\Gamma$ is infinite then $\gkdim(H) \geq \gkdim(H_\Xi) \geq \#(\Xi)$ for any finite subset $\Xi$ of $\Gamma$ and hence  $\gkdim(H) =\infty$. If $\Gamma$ is finite then $H=H_{\Gamma}$ and the inequality in ($*$)  becomes an equality for $\Xi=\Gamma$ by Part (3), whence $\gkdim(H) = \gkdim(H_\Gamma) = \#(\Gamma)$.
\end{proof}

\begin{remark}
It is worth mentioning that for a family $\{z_\gamma\}_{\gamma\in \Gamma}$ of homogeneous elements of $H$ and a total order on the index set $\Gamma$ as in the above theorem,  the function $\Gamma \to \mathbb{N}$ sending $\gamma$ to $\deg(z_\gamma)$ may fail to preserve order, that is, it may occur that $\deg(z_\gamma) >\deg(z_\delta)$ for some indexes $\gamma < \delta$. For example, take $X=\{x<y\}$. It may happen that $\Gamma=\{x, yx,y\}$, and one has $yx<_{\lex} y$ but $\deg(yx)>\deg(y)$.
\end{remark}

\begin{proposition}\label{gkdim-equal}
Assume $k$ is of characteristic $0$. Let $H$ be a  connected Hopf algebra. Then
$$\gkdim H = \gkdim \gr_c(H) \in \mathbb{N} \cup\{\infty\}.$$
\end{proposition}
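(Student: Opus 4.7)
The plan is to reduce the claim to Theorem \ref{structure-PBW-generator} applied to $\gr_c(H)$, which by the paragraph preceding that theorem is a connected graded Hopf algebra. That theorem produces a family $\{\bar z_\gamma\}_{\gamma \in \Gamma}$ of homogeneous elements of positive degrees $n_\gamma := \deg(\bar z_\gamma)$, a total order $\leq$ on $\Gamma$ satisfying the three compatibility conditions, and the identity $\gkdim \gr_c(H) = \#(\Gamma) \in \mathbb{N} \cup \{\infty\}$.

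Next I would lift the structure to $H$. For each $\gamma$, choose $z_\gamma \in H_{(n_\gamma)}$ mapping to $\bar z_\gamma \in H_{(n_\gamma)}/H_{(n_\gamma - 1)} = (\gr_c H)_{n_\gamma}$. I claim that the nondecreasing monomials
\[
\mathcal{P} := \{\, z_{\gamma_1} \cdots z_{\gamma_p} \mid p \geq 0,\ \gamma_1 \leq \cdots \leq \gamma_p \,\}
\]
form a basis of $H$, and more precisely that the subset of $\mathcal{P}$ with $n_{\gamma_1} + \cdots + n_{\gamma_p} \leq m$ is a basis of $H_{(m)}$. This is the standard filtered-to-graded lifting argument: each such monomial lies in $H_{(n_{\gamma_1}+\cdots+n_{\gamma_p})}$, its image in $\gr_c(H)$ is the corresponding PBW basis element provided by Theorem \ref{structure-PBW-generator}(3), and both linear independence and spanning then follow by induction on the filtration degree.

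If $\Gamma$ is finite, let $V$ be the finite-dimensional subspace of $H$ spanned by $1$ together with $\{z_\gamma : \gamma \in \Gamma\}$, and set $N := \max_{\gamma \in \Gamma} n_\gamma$. Then $V^n \subseteq H_{(nN)}$, while the inequality $n_\gamma \geq 1$ forces every monomial in $\mathcal{P} \cap H_{(m)}$ to have length at most $m$, so $H_{(m)} \subseteq V^m$. Because $\dim H_{(m)}$ equals the number of nondecreasing sequences in $\Gamma$ of weight $\leq m$, the estimate $(\ast)$ in the proof of Theorem \ref{structure-PBW-generator} (applied with $\Xi = \Gamma$) shows it grows polynomially of degree $\#(\Gamma)$; sandwiching $\dim V^n$ between $\dim H_{(n)}$ and $\dim H_{(nN)}$ yields $\gkdim H = \#(\Gamma) = \gkdim \gr_c(H)$.

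If $\Gamma$ is infinite, then for any finite $\Xi \subseteq \Gamma$ the nondecreasing monomials in $\{z_\xi : \xi \in \Xi\}$ are linearly independent in $H$; the very same Hilbert-series bound as in $(\ast)$ shows that the subalgebra of $H$ they generate has GK dimension at least $\#(\Xi)$, whence $\gkdim H \geq \#(\Xi)$ for every finite $\Xi$ and so $\gkdim H = \infty = \gkdim \gr_c(H)$. The main obstacle I anticipate is the PBW basis claim of the second paragraph, since the coradical filtration pieces $H_{(m)}$ need not be finite-dimensional when $\Gamma$ is infinite; the filtered-to-graded basis lifting does however go through for any exhaustive filtration with $H_{(-1)} = 0$, which is exactly what the coradical filtration provides.
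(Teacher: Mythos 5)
Your argument is correct. It rests on the same reduction as the paper's proof --- pass to $\gr_c(H)$, which is a connected graded Hopf algebra, and apply Theorem \ref{structure-PBW-generator} to it --- but it differs in how the GK dimensions of $H$ and $\gr_c(H)$ are then compared. The paper simply cites the general facts that $\gkdim H \geq \gkdim \gr_c(H)$ for any filtered algebra (\cite[Lemma 6.5]{KL}) and that equality holds once the associated graded algebra is finitely generated (\cite[Proposition 6.6]{KL}), the latter hypothesis being supplied by Theorem \ref{structure-PBW-generator}. You instead make the transfer explicit: you lift the PBW generators $\bar z_\gamma$ to elements $z_\gamma\in H_{(n_\gamma)}$, prove by induction on the filtration degree that the nondecreasing monomials of weight $\leq m$ form a basis of $H_{(m)}$ (which is valid even with infinite-dimensional layers, since the coradical filtration is exhaustive and $H_{(0)}=k$), and then read off the growth by sandwiching $\dim V^n$ between $\dim H_{(n)}$ and $\dim H_{(nN)}$ in the finite case, and by restricting to finite subfamilies $\Xi$ in the infinite case exactly as in the estimate $(\ast)$. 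In effect you re-prove the two cited results of Krause--Lenagan in the special situation at hand; this costs a page but makes the proposition self-contained modulo Theorem \ref{structure-PBW-generator} and \cite[Proposition 2.21]{ATV2}, and the basis-lifting step you carry out is the same ``standard application of filtered-graded methods'' that the paper invokes later in Propositions \ref{connected-commutative-Hopf-algebra} and \ref{graded-commutative-Hopf-algebra} and in Theorem \ref{structure-connected-Hopf}.
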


\begin{proof}
If $\gkdim \gr_c(H)$ is infinite then $\gkdim(H) \geq \gkdim \gr_c(H) =\infty$ by \cite[Lemma 6.5]{KL}, and so $\gkdim(H) =\infty$. If $\gkdim \gr_c(H)$ is finite then $\gr_c (H)$ is finitely generated and $\gkdim \gr_c(H) \in \mathbb{N}$ by Theorem \ref{structure-PBW-generator}, whence $\gkdim H= \gkdim \gr_c(H) \in \mathbb{N}$ by \cite[Proposition 6.6]{KL}.
\end{proof}

The following result is due to Brown, Gilmartin and Zhang (\cite[Theorem 2.4 (1, 5)]{BGZ}). We provide a slightly different proof without the use of Cartier-Kostant Theorem on the structure of cocomummutative connected Hopf algebras over a field of characteristic $0$ (\cite[Theorem 5.6.5]{Mont}).

\begin{proposition} (\cite[Theorem 2.4 (1, 5)]{BGZ})
Assume that $k$ is of characteristic $0$.  Let $H$ be a Hopf algebra that is connected graded and locally finite as an algebra. Here, locally finite  means that each component $H_n$ is finite dimensional.
Then there is a unique sequence of natural numbers $n_1,n_2,\cdots$ such that the Hilbert series of $H$ is $$\prod\nolimits_{i=1}^\infty (1-t^i)^{-n_i}.$$ Moreover, $\gkdim(H) =\sum_{i=1}^\infty n_i$, which is either infinite or a natural number.
\end{proposition}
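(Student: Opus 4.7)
The plan is to apply Theorem \ref{structure-PBW-generator} and read everything off the resulting PBW-type basis. First I would invoke that theorem to produce an indexed family $\{z_\gamma\}_{\gamma \in \Gamma}$ of homogeneous generators of positive degrees together with a total order on $\Gamma$ satisfying conditions (1)--(3). For each integer $i \geq 1$ I would set
\[
n_i \;:=\; \#\{\, \gamma \in \Gamma \mid \deg(z_\gamma) = i \,\}.
\]
Because the elements $z_\gamma$ of degree $i$ are part of a basis of $H$ by condition (3), they are linearly independent in $H_i$, so the local finiteness hypothesis forces $n_i \leq \dim H_i < \infty$; in particular each $n_i$ is a genuine natural number.

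Next I would derive the Hilbert series formula. Condition (3) says that $H$ has a basis consisting of ordered monomials $z_{\gamma_1} \cdots z_{\gamma_p}$ with $\gamma_1 \leq \cdots \leq \gamma_p$, each homogeneous of total degree $\sum_j \deg(z_{\gamma_j})$. Hence $H_n$ has a basis indexed by nondecreasing tuples of indices whose degrees sum to $n$. Grouping indices by their degree and enumerating the associated multisets degree class by degree class yields
\[
\sum_{n \geq 0} (\dim H_n)\, t^n \;=\; \prod_{i \geq 1} \bigl(1 - t^i\bigr)^{-n_i}.
\]
Finiteness of each $n_i$ is what makes the right-hand side a bona fide formal power series: only finitely many factors contribute to any fixed coefficient.

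Uniqueness of the sequence $(n_i)_{i \geq 1}$ I would get by an inductive comparison of coefficients. The coefficient of $t^i$ in $\prod_j (1 - t^j)^{-n_j}$ depends only on $n_1, \ldots, n_i$, and once $n_1, \ldots, n_{i-1}$ are fixed it is a strictly increasing function of $n_i$. Equivalently, one may take the formal logarithm to obtain $\sum_{j \geq 1} n_j \sum_{k \geq 1} t^{jk}/k$ and solve for the $n_j$ by a Möbius-type inversion on the coefficients. Either way the Hilbert series of $H$ pins down the sequence $(n_i)$.

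Finally, the identity $\gkdim H = \sum_{i \geq 1} n_i$ is a direct translation of the last sentence of Theorem \ref{structure-PBW-generator}: since $\sum_i n_i = \#(\Gamma)$, the index set $\Gamma$ is finite if and only if this sum is finite, in which case $\gkdim H = \#(\Gamma) = \sum_i n_i \in \mathbb{N}$; otherwise both quantities equal $\infty$. I anticipate no serious obstacle once the structure theorem is in hand; the only point requiring attention is verifying that local finiteness makes every $n_i$ finite, which is exactly what allows the infinite product to be formed and what identifies $\sum_i n_i$ with $\#(\Gamma)$.
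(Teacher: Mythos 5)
There is a genuine gap at the very first step: you apply Theorem \ref{structure-PBW-generator} directly to $H$, but that theorem requires $H$ to be a connected \emph{graded Hopf algebra}, i.e.\ the grading must be compatible with the comultiplication, counit and antipode. The hypothesis of the proposition is strictly weaker: $H$ is only assumed to be connected graded (and locally finite) \emph{as an algebra}, with no assumption that $\Delta_H$ preserves the grading. This distinction is not a technicality --- it is exactly the point of \cite[Question 0.4]{BGZ}, discussed in the introduction, whether such an $H$ is even a connected Hopf algebra. So the structure theorem is not available for $H$ as given, and your choice of the $z_\gamma$ and the resulting PBW basis has no justification at this stage.

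The paper closes this gap by a reduction: after replacing the augmentation so that $\mathfrak{m}:=\sum_{n\geq 1}H_n=\ker(\varepsilon_H)$ (using \cite[Lemma 2.1(2)]{BGZ}), one forms $B:=\bigoplus_{i,j\geq 0}(\mathfrak{m}^i/\mathfrak{m}^{i+1})_j$, which by \cite[Lemmas 3.2, 3.3]{GZ} is a genuine connected graded Hopf algebra with the same Hilbert series as $H$; one then checks $\gkdim B=\gkdim H$ (using $(\mathfrak{m}^i)_r=0$ for $r<i$ and a filtered--graded argument to transfer affineness, plus \cite[Lemma 6.1(b)]{KL} and \cite[Lemma 6.5]{KL}). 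Only then is Theorem \ref{structure-PBW-generator} applied, to $B$. From that point on, your argument --- setting $n_i$ to be the number of indices $\gamma$ with $\deg(z_\gamma)=i$, deducing $n_i<\infty$ from local finiteness, reading off the Hilbert series as $\prod_i(1-t^i)^{-n_i}$ from the multiset count, the uniqueness by coefficient comparison, and the identification $\gkdim=\#(\Gamma)=\sum_i n_i$ --- matches the paper's and is fine. You should insert the reduction to $B$ before everything else; without it the proof does not get off the ground.
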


\begin{proof}
Let $\mathfrak{m}:= \sum_{n\geq 1} H_n$. By \cite[Lemma 2.1 (2)]{BGZ} we may assume $\mathfrak{m} =\ker (\varepsilon_H)$. By \cite[Lemma 3.3]{GZ}, it is easy to see that $B:= \bigoplus_{i,j\geq 0} (\mathfrak{m}^i/\mathfrak{m}^{i+1})_j$ is a $\mathbb{Z}^2$-graded Hopf algebra with comultiplication and counit  defined in \cite[Lemma 3.2]{GZ}. So $B=\bigoplus_{j\geq 0} B_j$ is a graded Hopf algebra with $B_j=\bigoplus_{i\geq 0}  (\mathfrak{m}^i/\mathfrak{m}^{i+1})_j$.  Clearly, $B$ and $H$ have the same Hilbert series. In particular, $B_0=k$, so $B$ is a connected graded Hopf algebra.

Moreover, we claim that $\gkdim H=\gkdim B$. Indeed, if $\gkdim B=\infty$ then $\gkdim H=\infty$ by \cite[Lemma 6.5]{KL}. Suppose  $\gkdim B<\infty$. By Theorem \ref{structure-PBW-generator}, $B$ is affine, whence $H$ is affine too by a standard application of filtered-graded methods because $(\mathfrak{m}^i)_r=0$ for integers $0\leq r<i$. Then by \cite[Lemma 6.1 (b)]{KL}, $\gkdim B$ and $\gkdim H$ are both determined by their Hilbert series which are same.  Thus $\gkdim B=\gkdim H$.

According to the above discussion, to see the result one may assume that $H$ itself is a connected graded Hopf algebra.
The uniqueness of such sequence is clearly determined  by the Hilbert series of $H$. To see the existence, choose an indexed family $\{z_\gamma\}_{\gamma\in \Gamma}$ of homogeneous elements of $H$ of positive degrees and a total order $\leq$ on $\Gamma$ as in Theorem \ref{structure-PBW-generator}. Then the set  $$\Gamma_i:=\{~\gamma\in \Gamma ~|~ \deg(z_\gamma) =i~ \}$$ is finite for every integer $i\geq 1$, since $H$ is locally finite. Set $n_i$ to be the cardinality of $\Gamma_i$ for every $i\geq 1$.  Applying Theorem \ref{structure-PBW-generator} (3), a simple combinatorial argument tells us that the Hilbert series of $H$ is of the required form. The last statement is clear because $\gkdim H= \#(\Gamma)$.
\end{proof}

\begin{proposition}\label{connected-commutative-Hopf-algebra}
Assume that  $k$ is of characteristic $0$. Let $H$ be a connected commutative Hopf algebra. Then  $H$ is isomorphic as an algebra to the polynomial algebra in some family of variables.
\end{proposition}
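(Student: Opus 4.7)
The plan is to reduce to the graded case via the coradical filtration and then lift. Since $H$ is a connected Hopf algebra, the coradical filtration $\{H_{(n)}\}_{n\geq 0}$ is a Hopf algebra filtration, so $\gr_c(H) = \bigoplus_{n\geq 0} H_{(n)}/H_{(n-1)}$ is a connected graded Hopf algebra. Commutativity of $H$ passes to $\gr_c(H)$. Thus Theorem \ref{structure-PBW-generator} applies to $\gr_c(H)$ and yields an indexed family $\{z_\gamma\}_{\gamma\in\Gamma}$ of homogeneous elements of positive degree, together with a total order on $\Gamma$, satisfying (1)--(3).

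The key observation is that, because $\gr_c(H)$ is commutative, condition (2) of Theorem \ref{structure-PBW-generator} is automatic, and condition (3) then says precisely that the ordered monomials in the $z_\gamma$ form a $k$-basis of $\gr_c(H)$. In a commutative algebra, this is equivalent to saying the $z_\gamma$ are algebraically independent, so the natural algebra map $k[\{t_\gamma\}_{\gamma\in\Gamma}]\to \gr_c(H)$ sending $t_\gamma\mapsto z_\gamma$ is an isomorphism.

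It remains to lift this identification from $\gr_c(H)$ back to $H$. For each $\gamma\in\Gamma$, with $z_\gamma$ of filtration degree $n_\gamma$, I would choose a preimage $\tilde z_\gamma \in H_{(n_\gamma)}$, and then claim that the ordered monomials $\{\tilde z_{\gamma_1}\cdots \tilde z_{\gamma_r} : \gamma_1\leq\cdots\leq\gamma_r,\ r\geq 0\}$ form a $k$-basis of $H$. Spanning is a straightforward induction on the coradical filtration: given $h\in H_{(n)}$, its class in $H_{(n)}/H_{(n-1)}$ is a $k$-linear combination of the images of such monomials of total filtration weight $n$, so subtracting off the corresponding combination of $\tilde z$-monomials lands $h$ in $H_{(n-1)}$. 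Linear independence follows by the standard filtered-to-graded argument: any nontrivial relation, truncated to its top filtration-weight piece, would descend to a nontrivial relation among the ordered monomials in $\{z_\gamma\}$ in $\gr_c(H)$, contradicting condition (3). Combined with the commutativity of $H$, this shows that the algebra homomorphism $k[\{t_\gamma\}_{\gamma\in\Gamma}]\to H$ sending $t_\gamma\mapsto \tilde z_\gamma$ is an isomorphism.

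The only genuinely nontrivial input is Theorem \ref{structure-PBW-generator} itself; after that the argument is routine. The mildest subtlety is verifying that commutativity of $\gr_c(H)$ plus the Poincar\'e--Birkhoff--Witt-type basis of condition (3) is enough to conclude polynomiality (and not merely a description as a quotient of a polynomial ring), but this is immediate because in a commutative algebra the set of ordered monomials in a generating family coincides with the set of all monomials, so their linear independence is exactly algebraic independence of the generators.
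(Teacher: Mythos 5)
Your proposal is correct and follows essentially the same route as the paper: apply Theorem \ref{structure-PBW-generator} to the connected graded Hopf algebra $\gr_c(H)$, lift the homogeneous generators $z_\gamma$ to elements of $H$ along the coradical filtration, and transfer the PBW-type basis by the standard filtered-to-graded argument, which you merely spell out where the paper cites it as routine. No gaps.
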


\begin{proof}
Choose an indexed family $\{z_\gamma\}_{\gamma\in \Gamma}$ of homogeneous elements of $\gr_c (H)$ of positive degrees and   a total order $\leq$ on $\Gamma$ as in Theorem \ref{structure-PBW-generator}. For each index $\gamma$ with $\deg(z_\gamma)=n$,  pick an element $a_\gamma\in H_{(n)}$ such that $a_\gamma+H_{(n-1)} =z_\gamma$ in $\gr_c(H)$.  By a standard application of filtered-graded methods,
$$\{~ a_{\gamma_1} \cdots a_{\gamma_n} ~|~ n\geq0,~ \gamma_1,\cdots, \gamma_n \in \Gamma, ~ \gamma_1\leq \cdots \leq \gamma_n ~\}$$ is a basis of $H$. Since $H$ is commutative, $H$ is obviously a polynomial algebra.
\end{proof}

\begin{proposition}\label{graded-commutative-Hopf-algebra}
Assume that $k$ is of characteristic $0$. Let $H$ be a commutative Hopf algebra which is connected graded as an algebra. Then $H$ is isomorphic as a graded algebra to the graded polynomial algebra in some family of graded variables.
\end{proposition}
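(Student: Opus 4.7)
The plan is to transfer the polynomial algebra structure from the $\mathfrak{m}$-adic associated graded of $H$ back to $H$ itself. Set $\mathfrak{m}:=\bigoplus_{n\geq 1}H_n$; exactly as in the proof of the preceding proposition on Hilbert series, \cite[Lemma 2.1(2)]{BGZ} allows us to assume $\mathfrak{m}=\ker\varepsilon_H$, so that $\mathfrak{m}$ is a Hopf ideal. Then by \cite[Lemmas 3.2 and 3.3]{GZ} the bigraded algebra
\[
B \;:=\; \bigoplus_{i,j\geq 0}\big(\mathfrak{m}^i\cap H_j\big)\big/\big(\mathfrak{m}^{i+1}\cap H_j\big)
\]
inherits the structure of a commutative $\mathbb{Z}^2$-graded Hopf algebra which is connected with respect to each of the two gradings. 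In particular, $B$ is a commutative connected Hopf algebra, so Proposition \ref{connected-commutative-Hopf-algebra} yields that $B$ is a polynomial algebra as an abstract algebra.

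Since $B$ is bigraded, its augmentation ideal $\mathfrak{n}$ is bigraded, and hence so is the $k$-vector space $\mathfrak{n}/\mathfrak{n}^2$. Choose a bihomogeneous basis $\{\bar z_\gamma\}_{\gamma\in\Gamma}$ of $\mathfrak{n}/\mathfrak{n}^2$, with $\bar z_\gamma$ of bidegree $(i_\gamma, j_\gamma)$. Because $B$ is polynomial, equivalently $B\cong \mathrm{Sym}(\mathfrak{n}/\mathfrak{n}^2)$, any bihomogeneous lift of this basis to $\mathfrak{n}$ is a system of polynomial generators of $B$. For each $\gamma$, pick a representative $z_\gamma\in\mathfrak{m}^{i_\gamma}\cap H_{j_\gamma}$ whose class in $\mathfrak{n}/\mathfrak{n}^2$ is $\bar z_\gamma$. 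Let $P:=k[\{x_\gamma\}_{\gamma\in\Gamma}]$ be the polynomial algebra with $x_\gamma$ assigned degree $j_\gamma$, and define the graded algebra homomorphism $\phi\colon P\to H$ by $\phi(x_\gamma):=z_\gamma$.

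The remaining claim, that $\phi$ is an isomorphism of graded algebras, I would establish by a filtered-to-graded comparison. Writing $\mathfrak{n}_P$ for the augmentation ideal of $P$, the homomorphism $\phi$ carries $\mathfrak{n}_P^i$ into $\mathfrak{m}^i$ and induces on associated graded algebras precisely the polynomial identification $P\cong B$ constructed above; hence $\mathrm{gr}(\phi)$ is an isomorphism. Since every element of $\mathfrak{n}_P^i$ (respectively $\mathfrak{m}^i$) has degree at least $i$, the two adic filtrations restrict to finite separated filtrations on each graded component $P_n$ and $H_n$. A standard descending induction on filtration level then upgrades $\mathrm{gr}(\phi)$ being an isomorphism to $\phi\vert_{P_n}$ being an isomorphism for every $n$, completing the proof. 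The principal subtlety I anticipate lies in the initial reduction $\mathfrak{m}=\ker\varepsilon_H$ via the Brown--Gilmartin--Zhang lemma, applied without any finiteness hypothesis on $H$; given that reduction and Proposition \ref{connected-commutative-Hopf-algebra}, the remainder of the argument is formal.
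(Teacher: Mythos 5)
Your proof is correct and follows essentially the same route as the paper's: reduce to $\mathfrak{m}=\ker(\varepsilon_H)$ via \cite[Lemma 2.1(2)]{BGZ}, pass to the bigraded Hopf algebra $B$ with $B_{(i,j)}=(\mathfrak{m}^i/\mathfrak{m}^{i+1})_j$, recognize $B$ as a polynomial algebra on bihomogeneous generators (the paper gets this by applying Theorem \ref{structure-PBW-generator}(3) directly to the connected graded Hopf algebra $B$, you by routing through Proposition \ref{connected-commutative-Hopf-algebra} --- an immaterial difference), and transfer back to $H$ by the filtered-to-graded argument using $(\mathfrak{m}^i)_r=0$ for $r<i$. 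The one point worth making explicit is that, since $B$ is generated in $\mathfrak{m}$-adic degree one, one has $\mathfrak{n}/\mathfrak{n}^2=B_1=\mathfrak{m}/\mathfrak{m}^2$ and hence every $i_\gamma$ equals $1$; this is exactly what makes your assertion that $\phi$ induces the identification $P\cong\mathrm{Sym}(\mathfrak{n}/\mathfrak{n}^2)\cong B$ on the $\mathfrak{n}_P$-adic/$\mathfrak{m}$-adic associated graded algebras literally true (were some $i_\gamma>1$, $\mathrm{gr}(\phi)$ with respect to those filtrations would send $x_\gamma$ to $0$).
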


\begin{proof}
Let $\mathfrak{m}:= \sum_{n\geq 1} H_n$.  By the argument of \cite[Lemma 2.1 (2)]{BGZ}, to see the result we may assume $\mathfrak{m} =\ker (\varepsilon_H)$. By \cite[Lemma 3.3]{GZ}, $B= \bigoplus_{i\geq 0} B_i $ with $B_i:=\mathfrak{m}^i/\mathfrak{m}^{i+1}$ is a connected graded Hopf algebra with comultiplication and counit  defined in \cite[Lemma 3.2]{GZ}.  Then by Theorem \ref{structure-PBW-generator} (3),
$B$ is freely generated as a commutative algebra on any choice of bases for $B_1$, since $B$ is generated by $B_1$ and commutative. Note that $B$ is actually $\mathbb{Z}^2$-graded as an algebra with $B_{(i,j)} = (\mathfrak{m}^i/\mathfrak{m}^{i+1})_j.$ Choose an indexed family of homogeneous elements $\{a_\gamma\}_{\gamma\in \Gamma}$ of $\mathfrak{m}$ such that the family of cosets  $\{~ \overline{a_\gamma} ~ \}_{\gamma\in \Gamma}$, where $\overline{a_\gamma}:=a_\gamma+\mathfrak{m}^2$, forms a  basis of $B_1$. Fix a total order $\leq$ on $\Gamma$. Then the set
$$\{~ \overline{a_{\gamma_1}} \cdots \overline{a_{\gamma_n}} ~|~ n\geq0,~ \gamma_1,\cdots, \gamma_n \in \Gamma, ~ \gamma_1\leq \cdots \leq \gamma_n ~\}$$
forms a $\mathbb{Z}^2$-homogeneous basis for $B$. By a standard application of filtered-graded method and the observation that $(\mathfrak{m}^i)_r = 0$ for integers $0\leq r<i$, it is not hard to conclude that
the set $$\{~ a_{\gamma_1} \cdots a_{\gamma_n} ~|~ n\geq0,~ \gamma_1,\cdots, \gamma_n \in \Gamma, ~ \gamma_1\leq \cdots \leq \gamma_n ~\}$$ forms a homogeneous basis for $H$. Since $H$ is commutative,  $H$ is a graded  polynomial algebra.
\end{proof}

\begin{remark}\label{remark-commutative}
By Proposition \ref{connected-field-extension}, the assumption that the base field is algebraically closed in \cite[Theorem 0.1]{BGZ} can be omitted. That is to say among others that  if the base field is of characteristic $0$, then the following conditions for an affine commutative Hopf algebra $H$ are equivalent:
\begin{enumerate}
\item $H$ is a connected Hopf algebra.
\item $H$ is connected graded as an algebra.
\item $H$ is isomorphic as an algebra to a polynomial algebra.
\end{enumerate}
By Proposition \ref{connected-commutative-Hopf-algebra} and Proposition \ref{graded-commutative-Hopf-algebra}, we have $(1)\Rightarrow (2) \Leftrightarrow (3)$ for $H$ without being affine. But does the implication $(2)\Rightarrow (1)$ hold for $H$ without being affine?
\end{remark}

\begin{remark}\label{remark-finiteness}
Let $H$ be a connected Hopf algebra over a field of characteristic $0$. By \cite[Proposition 6.4]{Zh},  $\gr_c(H)$ is a commutative connected graded Hopf algebra. So one can easily  deduce from Proposition \ref{gkdim-equal} and Proposition \ref{graded-commutative-Hopf-algebra} that the following finiteness conditions are equivalent: (1) $H$ has finite GK dimension; (2) $\gr_c(H)$ has finite GK dimension; (3) $\gr_c(H)$ is affine; (4) $\gr_c(H)$ is noetherian. Moreover, if these equivalent conditions hold, then $H$ is affine and noetherian. See \cite[Theorem 6.9]{Zh} when the base field  $k$ is algebraically closed. It is an open question that whether  $H$ is noetherian can imply that $H$ has finite GK dimension (\cite[Question K]{BG0}).
\end{remark}

Let $H$ be a connected Hopf algebra of finite GK dimension over a field of characteristic $0$. Then the primitive space of $H$ is of finite dimension. Note that a graded Hopf algebra $K=\oplus_{n\geq 0}K_n$ is coradically graded (i.e. $K_{(0)} = K_0 $ and $K_{(1)} = K_0+K_1$) if and only if the iterated comultiplications $\Delta^{(n)}_K:K\to K^{\otimes n}$ restrict to an injective map  $K_n \to K_1^{\otimes n}$ for all integers $n\geq 1$ (see the proof of \cite[Lemma 5.5]{AnSc} for this fact). Since $\gr_c(H)$ is coradically graded (\cite[Proposition 7.9.4]{Rad}),
$\gr_c(H)$ is locally finite and whence the graded dual $\gr_c(H)^*$ is too. The primitive space of $\gr_c(H)^*$, which is  denoted by $\mathfrak{L}(H)$, is called the \emph{lantern} of $H$ (\cite[Definition 5.4]{BG}).  Note that $\mathfrak{L}(H)$ is a positively graded Lie algebra.

The  following observation on $\mathfrak{L}(H)$ is owed to Wang, Zhang and Zhuang (\cite[Lemma 1.3]{WZZ}) under the assumption that the base field $k$ is algebraically closed of characteristic $0$. We slightly modify their argument to omit the assumption of algebraically closedness on the base field.

\begin{proposition}\label{lattern}
Assume that  $k$ is of characteristic $0$. Let $H$ be a  connected Hopf algebra of finite GK dimension $d$. Then $\mathfrak{L}(H)$ is of dimension $d$ and generated in degree one.
\end{proposition}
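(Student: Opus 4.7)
The plan is to identify the lantern with the primitive Lie algebra of an enveloping algebra produced by dualizing $\gr_c(H)$, and then to read off both the dimension count and the generation statement from properties of $\gr_c(H)$ already established in the paper.

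First I would collect the structural facts about $K := \gr_c(H)$. By \cite[Proposition 6.4]{Zh} it is commutative; Proposition \ref{gkdim-equal} gives $\gkdim K = d$; and Proposition \ref{graded-commutative-Hopf-algebra} identifies $K$ with a graded polynomial algebra on some family of homogeneous variables. Since the GK dimension of a polynomial ring equals the number of its variables, this family has cardinality exactly $d$, so $K$ is locally finite. Moreover, $K$ is coradically graded by \cite[Proposition 7.9.4]{Rad}, which by the criterion recalled just before the proposition is equivalent to the injectivity of the iterated comultiplications $\Delta^{(n)}_K : K_n \hookrightarrow K_1^{\otimes n}$ for every $n\geq 1$.

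Next I would pass to the graded dual $K^*$. Local finiteness makes $K^*$ a graded Hopf algebra with the same Hilbert series as $K$; commutativity of $K$ dualizes to cocommutativity of $K^*$, and connectedness is clearly preserved. The graded form of the Cartier--Milnor--Moore theorem, valid in characteristic $0$, then supplies a graded Hopf algebra isomorphism $K^*\cong U(\mathfrak{L}(H))$. Since taking graded duals preserves Hilbert series, hence GK dimension, and $\gkdim U(\mathfrak{g}) = \dim \mathfrak{g}$ for any finite-dimensional Lie algebra $\mathfrak{g}$, one obtains $\dim \mathfrak{L}(H) = d$.

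For the generation statement, dualizing the injectivity of $\Delta^{(n)}_K$ yields surjective iterated multiplications $(K^*_1)^{\otimes n} \twoheadrightarrow K^*_n$, so $K^* = U(\mathfrak{L}(H))$ is generated as an algebra by $K^*_1 = \mathfrak{L}(H)_1$. If $\mathfrak{L}' \subseteq \mathfrak{L}(H)$ denotes the graded Lie subalgebra generated by $\mathfrak{L}(H)_1$, the inclusion $U(\mathfrak{L}') \hookrightarrow U(\mathfrak{L}(H))$ has image containing all algebra generators and is therefore an equality; passing to primitives (which recovers the Lie algebra in characteristic $0$) forces $\mathfrak{L}' = \mathfrak{L}(H)$. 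The main obstacle I anticipate is verifying that the Cartier--Milnor--Moore isomorphism can be chosen homogeneously; this is handled by noting that the standard construction via the primitive/coradical filtration is grading-preserving once $K^*$ is known to be coradically graded.
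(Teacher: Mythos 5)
Your proposal is correct and follows essentially the same route as the paper: dualize the coradically graded Hopf algebra $K=\gr_c(H)$, use generation of $K^*$ in degree one (the dual of the injectivity of the iterated comultiplications, which is \cite[Lemma 5.5]{AnSc} in the paper) to force the Lie subalgebra generated by $\mathfrak{L}(H)_1$ to equal $\mathfrak{L}(H)$, and get the dimension from the Hilbert series of $K^*$ together with $\gkdim U(\mathfrak{g})=\dim\mathfrak{g}$. The only cosmetic difference is that the paper sidesteps the full graded Cartier--Milnor--Moore isomorphism you worry about at the end, needing only the inclusion $U(\mathfrak{L}(H))\subseteq K^*$ plus generation in degree one to conclude equality.
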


\begin{proof}
Note that $K:=\gr_c(H)$ is coradically graded (\cite[Proposition 7.9.4]{Rad}). Then by \cite[Lemma 5.5]{AnSc}, the graded dual $K^*$ is generated in degree $1$ as an algebra. Note that
\[
(K^*)_1 = \mathfrak{L}(H)_1 \subseteq U(\mathfrak{g}) \subseteq U(\mathfrak{L}(H)) \subseteq K^*,
\]
where $\mathfrak{g}$ is the Lie subalgebra of $\mathfrak{L}(H)$ generated by $\mathfrak{L}(H)_1$.  So one has $U(\mathfrak{g}) = U(\mathfrak{L}(H)) =K^*$, and whence $\mathfrak{g} =\mathfrak{L}(H)$. Therefore $\mathfrak{L}(H)$ is generated in degree $1$.  Finally,
\[
\dim \mathfrak{L}(H) =  \gkdim K^* =\gkdim K = \gkdim H=d,
\]
where the first equality is well-known (see \cite[Example 6.9]{KL}),  the second equality is by  \cite[Lemma 6.1 (b)]{KL} and the observations that   $K$ and $K^*$ are both finitely generated and have the same Hilbert series,  and where the third equality is by Proposition \ref{gkdim-equal}.
\end{proof}

Connected Hopf algebras of GK dimension $\leq 4$ over algebraically closed fields of characteristic $0$ were  classified by Zhuang \cite{Zh} and Wang, Zhang and Zhuang \cite{WZZ}. The next result may shed some light on the classification of connected Hopf algebras of higher finite GK dimension. For a Hopf algebra $H$ and an element $a\in H$ we denote by $\rho_H(a)$ the least number $n\geq 0$ such that $a\in H_{(n)}$.

\begin{theorem}\label{structure-connected-Hopf}
Assume that  $k$ is of characteristic $0$. Let $H$ be a  connected Hopf algebra of finite GK dimension $d$. Then there is a sequence of elements $a_1,\cdots, a_d \in \ker(\varepsilon_H)$ such that
\begin{enumerate}
\item $1\leq \rho_H(a_1) \leq \rho_H(a_2) \leq \cdots \leq \rho_H(a_d)$;
\item for every integer $r\geq1$, the number of indexes $i$ with $\rho_H(a_i) =r$ is $\dim \mathfrak{L}(H)_r$;
\item for every non-negative integer $n$, the space $H_{(n)}$ has a basis
$$
\{~ a_{1}^{r_1} \cdots a_{d}^{r_d} ~|~ r_1,\cdots, r_d\geq0, ~ \rho_H(a_1)\cdot r_1 +\cdots +\rho_H(a_{d})\cdot r_d \leq n ~\};
$$
\item for every pair of indexes $1\leq i<j\leq d$,
$$a_j a_i -a_i a_j \in H_{(\rho_H(a_i) +\rho_H(a_j)-1)};$$
\item  for every index  $r=1,\cdots, d$,
\[
\Delta_H(a_r) \in 1\otimes a_r +a_r\otimes 1 + \sum_{i,j>0, ~ i+j = \rho_H(a_r)} H_{(i)} \otimes H_{(j)}.
\]
\end{enumerate}
\end{theorem}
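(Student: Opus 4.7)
The strategy is to apply Theorem \ref{structure-PBW-generator} to the associated graded Hopf algebra $K := \gr_c(H)$ and then lift the homogeneous PBW generators produced there back to $H$ along the coradical filtration. By Proposition \ref{gkdim-equal} together with \cite[Proposition 6.4]{Zh} (compare Remark \ref{remark-finiteness}), $K$ is a commutative connected graded Hopf algebra of GK dimension $d$. Theorem \ref{structure-PBW-generator} therefore supplies a finite family $\{z_\gamma\}_{\gamma\in\Gamma}$ of homogeneous elements of $K$ of positive degrees with $\#(\Gamma)=d$ satisfying conditions (1)--(3) there. Since $K$ is commutative, the total order on $\Gamma$ is irrelevant for the basis property in (3), so I would relabel $\Gamma=\{1,\ldots,d\}$ in such a way that $\deg(z_1)\leq\deg(z_2)\leq\cdots\leq\deg(z_d)$.

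For each $i$, I would choose any $a_i\in H_{(\deg z_i)}$ projecting to $z_i$ in $K_{\deg z_i}=H_{(\deg z_i)}/H_{(\deg z_i-1)}$, and then subtract $\varepsilon_H(a_i)\cdot 1$ to secure $a_i\in\ker(\varepsilon_H)$; this modification changes neither the image in $\gr_c H$ nor the filtration level, so $\rho_H(a_i)=\deg(z_i)$. Item (1) is then immediate from the ordering. Items (3), (4), (5) are routine consequences of the corresponding statements for $K$ via standard filtered-graded arguments: for (3), lift the polynomial basis of each $K_m$ through the coradical filtration on $H_{(n)}$; for (4), use the commutativity of $K$ to conclude $a_ja_i-a_ia_j$ vanishes in $K_{\rho_H(a_i)+\rho_H(a_j)}$ and hence lies in $H_{(\rho_H(a_i)+\rho_H(a_j)-1)}$; for (5), combine Theorem \ref{structure-PBW-generator}(1) applied to $K$ with the fact that the coradical filtration is a Hopf algebra filtration and the counit identity $\Delta_H(a)-1\otimes a-a\otimes 1\in\ker(\varepsilon_H)\otimes\ker(\varepsilon_H)$, and then upgrade the bound $i+j\leq\rho_H(a_r)$ to $i+j=\rho_H(a_r)$ via the inclusion $H_{(i)}\otimes H_{(j)}\subseteq H_{(i)}\otimes H_{(\rho_H(a_r)-i)}$ valid whenever $j\leq\rho_H(a_r)-i$.

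The substantive step is item (2), which I would settle by matching two expressions for the Hilbert series of $K$. Proposition \ref{graded-commutative-Hopf-algebra} presents $K$ as a graded polynomial algebra; counting the homogeneous polynomial generators in each degree via Theorem \ref{structure-PBW-generator}(3) and commutativity yields $H_K(t)=\prod_{r\geq 1}(1-t^r)^{-n_r}$, where $n_r:=\#\{i:\rho_H(a_i)=r\}$. On the other hand, the proof of Proposition \ref{lattern} shows that $(K^*)_1=\mathfrak{L}(H)_1$ generates $K^*$ as an algebra and that $K^*=U(\mathfrak{L}(H))$; the graded PBW theorem then gives $H_{K^*}(t)=\prod_{r\geq 1}(1-t^r)^{-\dim\mathfrak{L}(H)_r}$. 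Since $K$ is locally finite ($\Gamma$ being finite), $K$ and $K^*$ share Hilbert series, and uniqueness of the product decomposition forces $n_r=\dim\mathfrak{L}(H)_r$ for every $r\geq 1$. The main bookkeeping obstacle, as I see it, is the filtered-graded lifting in (3), where one must check that the $a$-monomials with the stated coradical bound genuinely span $H_{(n)}$; this is a routine consequence of the PBW basis for $K$ from Theorem \ref{structure-PBW-generator}(3) together with downward induction on the filtration, but deserves explicit verification.
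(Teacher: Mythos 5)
Your proposal is correct, and for items (1), (3), (4) and (5) it follows exactly the paper's argument: apply Theorem \ref{structure-PBW-generator} to the commutative connected graded Hopf algebra $\gr_c(H)$ (commutativity by \cite[Proposition 6.4]{Zh}, finite GK dimension by Proposition \ref{gkdim-equal}), order the resulting generators by degree, lift them to $\ker(\varepsilon_H)$ along the coradical filtration, and transfer the basis, commutator and comultiplication statements by standard filtered--graded arguments (the paper handles (5) by citing the argument of \cite[Lemma 5.3.2 (2)]{Mont}, which is the same counit computation you sketch; your appeal to Theorem \ref{structure-PBW-generator}(1) there is harmless but not needed).

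The one place where you genuinely diverge is item (2). The paper argues directly on the graded dual: it takes the dual basis $\{u^*\}$ of the monomial basis of $\gr_c(H)$, checks by evaluating $\Delta_{\gr_c(H)^*}(z_i^*)$ on pairs of monomials that each $z_i^*$ is primitive, and then invokes $\dim\mathfrak{L}(H)=d$ from Proposition \ref{lattern} to conclude that $\{z_1^*,\dots,z_d^*\}$ is a homogeneous basis of $\mathfrak{L}(H)$, which gives the degree count at once. You instead compare Hilbert series: $\prod_r(1-t^r)^{-n_r}$ from the polynomial presentation of $K=\gr_c(H)$ against $\prod_r(1-t^r)^{-\dim\mathfrak{L}(H)_r}$ from the PBW basis of $K^*=U(\mathfrak{L}(H))$, using local finiteness and uniqueness of the product decomposition. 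This is valid, but note that it leans on the identification $K^*\cong U(\mathfrak{L}(H))$ as an actual enveloping algebra (so that PBW applies), which in the proof of Proposition \ref{lattern} is obtained from the generation-in-degree-one argument together with the structure theory of cocommutative connected Hopf algebras in characteristic $0$; the paper's primitivity computation avoids needing the PBW basis of $K^*$ and only uses the dimension statement of Proposition \ref{lattern}. Both routes are sound; the paper's is slightly more self-contained, yours is slightly more mechanical.
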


\begin{proof}
By \cite[Proposition 6.4]{Zh} and Proposition \ref{gkdim-equal}, $\gr_c (H)$ is commutative and of GK dimension $d$.  So by Theorem \ref{structure-PBW-generator},  one may choose a sequence of homogeneous elements $z_1,\cdots,
z_d \in \gr_c(H)$ of positive degrees such that $\deg(z_1)  \leq \cdots \leq \deg(z_d)$ and
\[
\{~ z_{1}^{r_1} \cdots z_{d}^{r_d} ~|~ r_1,\cdots,
r_d\geq0 ~\}
\]
forms a basis of $\gr_c(H)$. For each index $i=1,\cdots,d$,  pick an element $a_i\in H_{(\deg(z_i))}$ such that $$a_i+H_{(\deg(z_i)-1)} =z_i \quad \text{in} \quad \gr_c (H).$$ By replacing $a_i$ by $a_i-\varepsilon_H(a_i)$, one may assume $a_i\in \ker(\varepsilon_H)$.
By the construction,
$\rho_H(a_i) =\deg(z_i)$, so Part (1)
follows.  By a standard application of filtered-graded methods, we have Part (3). Since $\gr_c(H)$ is commutative, we have Part (4). By the argument of \cite[Lemma 5.3.2 (2)]{Mont}, we have Part (5).

To see Part (2), it is equivalent to show for every integer $r\geq 1$ the number of indexes
$i$ with $\deg(z_i) = r$ is $\dim \mathfrak{L}(H)_r$. For any (commutative) monomial $u= z_{1}^{r_1} \cdots z_{d}^{r_d}$, let $u^*\in \gr_c(H)^*$ be the linear functional that sends $u$ to $1$ and  all
other monomials to $0$. Then these linear functionals, with $u$ running over all monomials in $z_1,\cdots, z_d$, form a homogeneous basis of  $\gr_c(H)^*$ with $\deg(u^*) =\deg(u)$.  By Proposition \ref{lattern}, to prove (2)  it suffices to show $z_i^*$ is primitive for $i=1,\cdots, d$. Clearly,
\[
\Delta_{\gr_c(H)^*}(z_i^*) (u\otimes v) = 0
\]
for monomials $u,v$ with $(u,v) \not\in \{(1,z_i), ~ (z_i,1)\}$.
It follows that
\[
\Delta_{\gr_c(H)^*}(z_i^*) = \mu_i \otimes z_i^* +z_i^*\otimes  \nu_i , \quad \mu_i, ~ \nu_i\in k.
\]
Since $\varepsilon_{\gr_c(H)^*} (z_i^*) =0$,  by counitality one has $\mu_i=\nu_i=1$. Thus $z_i^*$ is indeed primitive.
\end{proof}

The condition that the tensor algebra $H\otimes H$ is a domain is important in the study of Hopf Ore extensions by Brown, O'Hagan, Zhang and Zhuang \cite{BOZZ}. At the beginning of the Subsection 2.8 of \cite{BOZZ}, the authors indicate that if $H$ is a connected Hopf algebra then $H^{\otimes n}$ is a domain for every integer $n\geq 1$, provided that the base field $k$ is algebraically closed of characteristic $0$. We extend their result to arbitrary fields of characteristic $0$ by a different argument. As a consequence, some results of \cite{BOZZ} can be generalized by omitting the  assumption that the base field is  algebraically closed.

\begin{theorem}\label{connected-Hopf-are-domain}
Assume that  $k$ is of characteristic $0$. Let $H$ be a connected Hopf algebra. Then for every algebra $A$ which is a domain, $H\otimes A$ is a domain. In particular, $H^{\otimes n}$ are domains for $n\geq 1$.
\end{theorem}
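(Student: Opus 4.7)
The plan is to reduce the claim to the commutative setting by passing to the associated graded of the coradical filtration, and then to use a standard filtered-to-graded argument to lift the domain property back to $H\otimes A$.

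First I would recall that by \cite[Proposition 6.4]{Zh} (noted in Remark \ref{remark-finiteness}), for any connected Hopf algebra $H$ over a field of characteristic $0$ the associated graded Hopf algebra $\gr_c(H)$ is commutative. Since $\gr_c(H)$ is also connected graded by construction, Proposition \ref{graded-commutative-Hopf-algebra} applies and presents $\gr_c(H)$ as a graded polynomial algebra over $k$ in some (possibly infinite) family of graded variables. Consequently, for any domain $A$, the tensor product $\gr_c(H) \otimes A$ is a polynomial algebra over $A$, hence itself a domain (a polynomial algebra in any number of variables over a domain is a domain, being a directed union of polynomial algebras in finitely many variables).

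Next I would transfer the domain property back to $H \otimes A$. Endow $H \otimes A$ with the exhaustive increasing filtration $F_n(H\otimes A) := H_{(n)} \otimes A$ inherited from the coradical filtration of $H$. Since $A$ is $k$-flat, tensoring with $A$ commutes with the formation of the quotients $H_{(n)}/H_{(n-1)}$, so the associated graded algebra $\bigoplus_{n\geq 0} F_n/F_{n-1}$ is canonically identified with $\gr_c(H)\otimes A$, which is a domain by the previous step. The classical argument then finishes: for nonzero $a,b \in H\otimes A$, take the minimal $m,n$ with $a\in F_m$ and $b\in F_n$; their principal symbols in $F_m/F_{m-1}$ and $F_n/F_{n-1}$ are nonzero, so their product in the associated graded is nonzero, which forces $ab \notin F_{m+n-1}$ and in particular $ab \neq 0$. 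Thus $H\otimes A$ is a domain.

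For the ``in particular'' clause I would induct on $n$: the base case $n=1$ (taking $A=k$) shows $H$ itself is a domain, and for $n\geq 2$ one applies the main assertion with $A := H^{\otimes(n-1)}$, which is a domain by the induction hypothesis. The only step that requires a little care is the identification of the associated graded of $F_\bullet(H\otimes A)$ with $\gr_c(H)\otimes A$, which is a straightforward consequence of the flatness of $A$ over the ground field; the rest is a routine filtered-to-graded deduction, and, crucially, Proposition \ref{graded-commutative-Hopf-algebra} is available over any field of characteristic $0$, which is precisely how we avoid the algebraic closedness assumption used in the Cartier--Kostant style argument of \cite{BOZZ}.
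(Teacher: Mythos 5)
Your argument is correct and follows essentially the same route as the paper: both pass to the coradical filtration, identify $\gr_c(H)$ as a (graded) polynomial algebra via \cite[Proposition 6.4]{Zh} together with the commutative structure results, note that $\gr_{\mathcal{F}}(H\otimes A)\cong \gr_c(H)\otimes A$ is then a domain, and lift the domain property through the filtration (the paper cites \cite[Proposition 1.6.6]{MR} for the step you spell out by hand). The only cosmetic difference is that you invoke Proposition \ref{graded-commutative-Hopf-algebra} where the paper uses Proposition \ref{connected-commutative-Hopf-algebra}; either suffices.
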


\begin{proof}
By \cite[Proposition 6.4]{Zh} and Proposition \ref{connected-commutative-Hopf-algebra}, $\gr_c(H)$ is a polynomial algebra  in some family of variables. Consider the natural filtration $\mathcal{F} = \{~ H_{(n)} \otimes A ~\}_{n\geq 0}$ on $H\otimes A$.  It is easy to check that  $$\gr_{\mathcal{F}} (H\otimes A) \cong \gr_c (H) \otimes A.$$
So $\gr_{\mathcal{F}} (H\otimes A)$ is a domain.  Then by \cite[Proposition 1.6.6]{MR},  $H\otimes A$ is a domain.
\end{proof}

Recall from \cite[Definition 3.1]{BOZZ} that a Hopf algebra $H$ is called an \emph{iterated Hopf Ore extension of $k$} (IHOE, for short)  if there is a  chain of Hopf subalgebras $$k= H^0 \subset H^1 \subset \cdots \subset H^n =H$$ with each of the extensions $H^i\subset H^{i+1}$ being  an Ore extension as algebras.  By the argument of \cite[Proposition 2.8]{BOZZ} and the above theorem one immediately has:

\begin{corollary}\label{IHOE-connected}(\cite[Theorem 1.4]{BOZZ})
Assume $k$ is of characteristic $0$. Then IHOEs are all connected Hopf algebras. \hfill $\Box$
\end{corollary}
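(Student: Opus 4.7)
The plan is to proceed by induction on the length $n$ of the chain $k = H^0 \subset H^1 \subset \cdots \subset H^n = H$ that exhibits $H$ as an IHOE. The base case $n = 0$ is immediate. For the inductive step, set $K := H^{n-1}$ and assume inductively that $K$ is connected; I then need to show that $H = H^n = K[z;\sigma,\delta]$, with its Hopf Ore structure, is also connected.

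First I would pin down the precise form of $\Delta_H(z)$. In any Hopf Ore extension of $K$ one may write
\[
\Delta_H(z) \;=\; z \otimes a + b \otimes z + w, \qquad a, b \in K,\; w \in K \otimes K.
\]
By Theorem \ref{connected-Hopf-are-domain} applied to the connected Hopf algebra $K$, both $K \otimes K$ and $K^{\otimes 3}$ are domains. Expanding the coassociativity identity $(\Delta_H \otimes \id)\Delta_H(z) = (\id \otimes \Delta_H)\Delta_H(z)$ in $H^{\otimes 3}$ and separating contributions by $z$-degree in each tensor slot (valid because the $z$-free pieces lie in the domain $K^{\otimes 3}$, which rules out illegitimate cancellations), I would extract $\Delta_K(a) = a \otimes a$ and $\Delta_K(b) = b \otimes b$; applying $\varepsilon$ to each tensor factor of the original expression forces $\varepsilon(a) = \varepsilon(b) = 1$. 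Thus $a, b \in G(K)$, and connectedness of $K$ yields $G(K) = \{1\}$, so $a = b = 1$ and
\[
\Delta_H(z) \;=\; 1 \otimes z + z \otimes 1 + w, \qquad w \in K \otimes K.
\]

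Next I would bootstrap this into connectedness of $H$ by exhibiting a coalgebra filtration whose $0$-th term is $k$. Choose $m \geq 0$ with $w \in K_{(m)} \otimes K_{(m)}$, possible since the coradical filtration exhausts the connected $K$, and set $N := 2m$ (or any integer large enough to accommodate the degrees that appear). Define
\[
F_p(H) \;:=\; \sum_{i \geq 0} K_{(p - iN)}\, z^i, \qquad p \geq 0,
\]
with the convention $K_{(j)} := 0$ for $j < 0$. A direct computation — expanding $\Delta_H(xz^i) = \Delta_K(x)\,\Delta_H(z)^i$, using the Ore relations $zk = \sigma(k)z + \delta(k)$ to keep track of $z$-degrees in both tensor slots, and invoking the coalgebra filtration property of $\{K_{(j)}\}$ — shows that $\{F_p(H)\}$ is an exhaustive coalgebra filtration of $H$ with $F_0(H) = K_{(0)} = k$. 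Since every coalgebra filtration dominates the coradical filtration, one concludes $H_{(0)} \subseteq F_0(H) = k$, so $H$ is connected.

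The main obstacle will be the first step: cleanly extracting, from coassociativity together with the domain property of $K^{\otimes 3}$, the conclusion that both $a$ and $b$ are grouplike. The delicacy is that $z$ does not commute with $K$, so the Ore relations $zk = \sigma(k)z + \delta(k)$ intervene throughout $H^{\otimes 3}$ when comparing the two sides of coassociativity slot by slot, and one must track $z$-degrees with care before applying the domain property to isolate the desired identities. Once $a = b = 1$ is established, the filtration bookkeeping in the second step is essentially routine.
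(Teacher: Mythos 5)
Your overall strategy --- induct along the chain, invoke Theorem \ref{connected-Hopf-are-domain} so that $K^{\otimes n}$ is a domain, normalise $\Delta_H(z)$ to $1\otimes z+z\otimes 1+w$ with $w\in K\otimes K$, and finish with a coalgebra filtration dominating the coradical filtration --- is exactly the route the paper takes (its proof is a citation of the argument of \cite[Proposition 2.8]{BOZZ} together with Theorem \ref{connected-Hopf-are-domain}). The genuine gap is at the point you treat as a given: the claim that ``in any Hopf Ore extension of $K$ one may write $\Delta_H(z)=z\otimes a+b\otimes z+w$''. With the definition of IHOE used in this paper, the only hypotheses are that $K\subset H$ is a Hopf subalgebra and that $H=K[z;\sigma,\delta]$ \emph{as algebras}; a priori $\Delta_H(z)$ is an arbitrary element of $H\otimes H=\bigoplus_{p,q\ge 0}(K\otimes K)(z^p\otimes z^q)$. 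Before your step one can start, one must prove (i) that no component of bidegree $(p,q)$ with $p\ge 2$ or $q\ge 2$ occurs, and (ii) that the bidegree-$(1,1)$ component $v\cdot(z\otimes z)$ vanishes. Point (i) is where the domain property is genuinely needed: the leading coefficient of $\Delta_H(z)^m$ is a product of $\sigma$-twists of the leading coefficient of $\Delta_H(z)$, nonzero precisely because $K\otimes K$ is a domain, so comparing leading tridegrees of $(\Delta_H\otimes\id)\Delta_H(z)$ and $(\id\otimes\Delta_H)\Delta_H(z)$ forces the $z$-degrees in each slot to be at most $1$. Point (ii) requires a further argument (coassociativity plus $a=b=1$ reduces $v$ to a scalar multiple $\lambda(1\otimes 1)$, and then the antipode identity applied to $1+\lambda z$, together with additivity of $z$-degree in the domain $H$, forces $\lambda=0$). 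By contrast, the step you single out as the main obstacle --- extracting $\Delta_K(a)=a\otimes a$ and $\Delta_K(b)=b\otimes b$ once the form is known --- is the easy part: it follows from comparing the $(1,0,0)$ and $(0,0,1)$ components in the free $K^{\otimes 3}$-module decomposition of $H^{\otimes 3}$, and does not use that $K^{\otimes 3}$ is a domain at all.

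A secondary, smaller issue: verifying that $\Delta_H(F_p)\subseteq\sum_j F_j\otimes F_{p-j}$ requires rewriting $\Delta_K(x)\,\Delta_H(z)^i$ in left-normal form, hence requires knowing that $\sigma$ preserves each term $K_{(n)}$ of the coradical filtration and that $\delta$ raises coradical degree by a bounded amount. This is true --- comparing bidegree components of $\Delta_H(zx)=\Delta_H(z)\Delta_K(x)$ yields $\Delta_K\circ\sigma=(\sigma\otimes\id)\circ\Delta_K=(\id\otimes\sigma)\circ\Delta_K$ and an analogous constraint on $\delta$, from which the compatibility follows by induction on $n$ --- but it is an extra lemma rather than bookkeeping, and your sketch does not mention it.
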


The following result is the most surprising corollary of Theorem \ref{structure-PBW-generator}. It provides a large class of connected Hopf algebras of finite GK dimension that are IHOEs.

\begin{theorem}
\label{structure-connected-graded-Hopf}
Assume that $k$ is of characteristic $0$. Let $H$ be a connected graded Hopf algebra  of finite GK dimension $d$. Then $H$ is an IHOE. More precisely, there is a finite sequence $z_1,\cdots, z_d $ of homogeneous elements of $H$  of positive degrees such that
$$
H = k[z_1][z_2;\delta_2]\cdots [z_d; \delta_d]
$$
and
$$
\Delta_H(z_r) \in 1\otimes z_r + z_r\otimes 1 + \sum_{i,j>0, ~ i+j =\deg(z_r)}(H^{\leq r-1})_i \otimes (H^{\leq r-1})_j, \quad r=1,\cdots,d,
$$
where $H^{\leq 0}=k$, $H^{\leq i}$ is the subalgebra of $H$ generated by $z_1,\cdots, z_i$ for $i=1,\cdots,d$
and  $\delta_i$ a derivation of $H^{\leq i-1}$ for $i=2,\cdots, d$.
In particular, $H^{\leq 0}, H^{\leq 1}, \cdots, H^{\leq d}$ are all graded Hopf subalgebras of $H$.
\end{theorem}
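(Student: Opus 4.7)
The strategy is to derive this theorem as a structured unpacking of Theorem \ref{structure-PBW-generator}. Since $\gkdim H = d$ is finite, that theorem yields a totally ordered index set $\Gamma$ of cardinality $d$, so we may enumerate $\Gamma = \{\gamma_1 < \gamma_2 < \cdots < \gamma_d\}$ in the supplied order $\leq$ and set $z_r := z_{\gamma_r}$. By construction, the subalgebra $H^{<\gamma_r}$ appearing in Theorem \ref{structure-PBW-generator} is precisely the subalgebra generated by $z_1, \ldots, z_{r-1}$, which is $H^{\leq r-1}$ in the notation of the present theorem. Condition (1) of Theorem \ref{structure-PBW-generator} therefore gives the required coproduct formula for each $z_r$ immediately.

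For the iterated Ore extension structure, I would proceed as follows. For $r \geq 2$, consider the inner derivation $\delta_r := [z_r, \cdot\,]$ of $H$. By condition (2) of Theorem \ref{structure-PBW-generator}, $\delta_r(z_s) = z_r z_s - z_s z_r \in H^{\leq r-1}$ for every $s < r$; since $\delta_r$ is a derivation and $H^{\leq r-1}$ is generated as an algebra by $z_1, \ldots, z_{r-1}$, it follows that $\delta_r$ restricts to a derivation of $H^{\leq r-1}$. Condition (3) of Theorem \ref{structure-PBW-generator} supplies a basis of $H^{\leq r}$ consisting of nondecreasing monomials in $z_1, \ldots, z_r$; collecting the trailing block of $z_r$'s in each such monomial exhibits $H^{\leq r}$ as a free right $H^{\leq r-1}$-module with basis $\{z_r^n : n \geq 0\}$, which together with the commutation identity $z_r a = a z_r + \delta_r(a)$ yields the presentation $H^{\leq r} = H^{\leq r-1}[z_r; \delta_r]$.

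It remains to show that each $H^{\leq r}$ is a graded Hopf subalgebra. That $H^{\leq r}$ is a graded subalgebra is clear since all $z_s$ are homogeneous. For each $s \leq r$, the coproduct formula places $\Delta_H(z_s)$ in $1 \otimes z_s + z_s \otimes 1 + H^{\leq s-1} \otimes H^{\leq s-1} \subseteq H^{\leq r} \otimes H^{\leq r}$; since $\Delta_H$ is an algebra homomorphism, this forces $\Delta_H(H^{\leq r}) \subseteq H^{\leq r} \otimes H^{\leq r}$, so that $H^{\leq r}$ is a graded sub-bialgebra. Because $H$ is connected graded, so is $H^{\leq r}$, and the standard recursive formula for the antipode in a connected graded Hopf algebra (which on $H_n$ expresses $S_H$ in terms of $\Delta_H$ and values of $S_H$ on components of degree $<n$) confines itself to $H^{\leq r}$ by induction on degree, giving $S_H(H^{\leq r}) \subseteq H^{\leq r}$.

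The genuine mathematical content lies entirely in Theorem \ref{structure-PBW-generator}; what remains here is organizational. The only potential pitfall is that the order on $\Gamma$ coming from Theorem \ref{structure-PBW-generator} need not be compatible with the degree function $\gamma \mapsto \deg(z_\gamma)$, as is noted in the remark following that theorem. However, this plays no role in the argument above, since the only property of the enumeration that is ever invoked is the identification $H^{<\gamma_r} = H^{\leq r-1}$, which holds by the very definition of the indexing.
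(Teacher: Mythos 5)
Your proof is correct and follows essentially the same route as the paper: enumerate $\Gamma$ by its total order, use condition (2) of Theorem \ref{structure-PBW-generator} to see that $[z_r,-]$ restricts to a derivation $\delta_r$ of $H^{\leq r-1}$, use condition (3) to obtain the PBW-type basis exhibiting $H^{\leq r}$ as a free module over $H^{\leq r-1}$ on the powers of $z_r$, and read off the coproduct formula (hence the Hopf subalgebra claim) from condition (1). The only quibble is that collecting trailing powers of $z_r$ exhibits $H^{\leq r}$ as a free \emph{left} $H^{\leq r-1}$-module on $\{z_r^n\}$ (which is what the identification with $H^{\leq r-1}[z_r;\delta_r]$ requires), not a right module as you wrote --- an inconsequential slip, since both hold for an Ore extension by a derivation.
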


\begin{proof}
Fix an  indexed family $\{z_\gamma\}_{\gamma\in \Gamma}$ of homogeneous elements of $H$ of positive degrees and a total order $\leq$ on $\Gamma$ as in Theorem \ref{structure-PBW-generator}. Since $H$ is of finite GK dimension $d$, we may assume $\Gamma= \{~1,\cdots, d ~\}$ equipped with the natural order. By Theorem \ref{structure-PBW-generator} (2), it is not hard to check that
$$z_i \cdot f -f\cdot z_i \in (H^{\leq i-1})_{\deg(z_i) +\deg(f)}$$ for every homogeneous element $f$ in $H^{\leq i-1}$ and every $i=1,\cdots,d$.
Then by induction on $i$ and Theorem \ref{structure-PBW-generator} (3), one may readily conclude that the set
\[ \{~ z_1^{r_1}\cdots z_{i}^{r_{i}} ~ |~ r_1,\cdots, r_{i}\geq 0 ~\}\]
is a basis of the subalgebra $H^{\leq i}$. Consequently, $H^{\leq 1} = k[z_1]$  and
$H^{\leq i}$ is a free graded left $H^{\leq i-1}$-module with homogeneous basis $\{~ 1, ~ z_i, ~ z_i^2, \cdots ~\}$ for $i= 2,\cdots, d$. Define a derivation $\delta_i:H^{\leq i-1} \to H^{\leq i-1}$ by $$f\mapsto z_i \cdot f -f\cdot z_i, \quad f\in H^{\leq i-1},$$  one gets
$H^{\leq i} = H^{\leq i-1} [z_i;\delta_i].$
Finally, the required formula for $\Delta_H(z_r)$ is by Theorem \ref{structure-PBW-generator} (1). The last statement is clear from the expansion $\Delta_H(z_r)$ for  $r=1,\cdots, d$.
\end{proof}

Finally, we show that some fundamental homological properties are enjoyed by connected Hopf algebras of finite GK dimension over a field of characteristic $0$. The result is well-known to experts when the base field is algebraically closed. The proof is closely following the idea  of  Zhuang in \cite[Corollary 6.10]{Zh} and of Brown and Gilmartin in \cite[Theorem 4.3]{BG}. The unexplained terminology used in the theorem is standard, and can be found for example in  \cite{Bj, BZ,  Le,  MR,  RRZ, ZSL1,ZSL2}.

\begin{theorem}
Assume that $k$ is of characteristic $0$. Let $H$ be a connected  Hopf algebra of finite GK dimension $d$. Then
\begin{enumerate}
\item $H$ is universally noetherian (i.e., $H\otimes R$ is noetherian for any noetherian algebra $R$).
\item $H$ is Auslander regular, (GK-)Cohen-Macaulay and skew $d$-Calabi-Yau.
\item $H$ is of global dimension $d$ and Krull dimenision $\leq d$.
\item $H$ is Artin-Schelter regular of dimension $d$ (as an augmented algebra by $\varepsilon_H$) .
\end{enumerate}
\end{theorem}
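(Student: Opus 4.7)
The plan is to transfer homological properties from $\gr_c(H)$ to $H$ via the coradical filtration, exactly in the spirit of \cite[Corollary 6.10]{Zh} and \cite[Theorem 4.3]{BG}, with the novelty being that the algebraic closedness of $k$ is no longer required. The crucial input is that $\gr_c(H)$ is commutative by \cite[Proposition 6.4]{Zh}, its GK dimension equals $d$ by Proposition \ref{gkdim-equal}, and hence by Proposition \ref{connected-commutative-Hopf-algebra} combined with Theorem \ref{structure-PBW-generator} it is isomorphic as an algebra to the polynomial ring $k[x_1,\ldots,x_d]$ in exactly $d$ variables.

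First I would record that the polynomial algebra $k[x_1,\ldots,x_d]$ already enjoys every property in the conclusion: it is universally noetherian, commutative Auslander regular, (GK–)Cohen–Macaulay, $d$-Calabi–Yau, of global and Krull dimension $d$, and Artin–Schelter regular of dimension $d$ (as an augmented algebra via the canonical augmentation, which is the one induced by $\varepsilon_H$). Next, I would note that the coradical filtration $\mathcal{F}=\{H_{(n)}\}_{n\geq 0}$ is an exhaustive, separated, $\mathbb{N}$-filtration of $H$ by finite-dimensional subspaces (by Remark \ref{remark-finiteness}, $\gr_c(H)$ is affine, hence locally finite), with $\gr_{\mathcal{F}} H = \gr_c(H)$ connected graded in degree $0$.

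For (1), given a noetherian algebra $R$, the induced filtration $\{H_{(n)}\otimes R\}_{n\geq 0}$ on $H\otimes R$ has associated graded $\gr_c(H)\otimes R \cong R[x_1,\ldots,x_d]$, which is noetherian by Hilbert's basis theorem; the standard lifting lemma (cf. \cite[Theorem 1.6.9]{MR}) then gives that $H\otimes R$ is noetherian. For (2) and (3), Björk's filtered-graded transfer theorem yields that Auslander regularity and the Cohen–Macaulay property lift from $\gr_{\mathcal{F}} H$ to $H$, and $\gldim H \leq \gldim \gr_{\mathcal{F}} H = d$; the reverse inequality $\gldim H \geq d$ comes from the fact that $H$ has a trivial module $k$ of projective dimension at least $d$, which can be seen for instance from $\mathrm{Ext}_H^d(k,k)\neq 0$ via the lattern $\mathfrak{L}(H)$ (Proposition \ref{lattern}). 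The Krull dimension bound $\mathrm{Kdim}\,H \leq \gkdim H = d$ is general.

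For (4), Artin–Schelter regularity and the skew Calabi–Yau property I would deduce by combining the preceding items with the filtered-graded transfer of AS-regularity: since $\gr_{\mathcal{F}} H$ is a commutative AS-regular algebra of dimension $d$, the filtered algebra $H$ is AS-regular of dimension $d$ as an augmented algebra (see e.g. \cite{BZ, RRZ}); a noetherian AS-regular Hopf algebra is automatically skew Calabi–Yau with Nakayama automorphism determined by the homological integral (see \cite{ZSL1, ZSL2}). The main obstacle I anticipate is being fastidious about the hypotheses needed for these filtered-graded liftings (in particular, ensuring the filtration is sufficiently well-behaved for Björk's theorem and for the AS-regularity transfer), and about verifying that the Nakayama automorphism produced this way really witnesses the skew $d$-Calabi–Yau condition in the precise sense of \cite{ZSL1, ZSL2}; once these technicalities are in place, the four statements follow uniformly from the structural fact that $\gr_c(H)$ is a polynomial ring in $d$ variables.
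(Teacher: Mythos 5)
Your proposal follows essentially the same route as the paper: reduce everything to the fact that $\gr_c(H)$ is a polynomial algebra in $d$ variables and transfer the properties along the coradical filtration, using the same references for the noetherian lifting and for the filtered-graded transfer of the Auslander, Cohen--Macaulay and Calabi--Yau conditions. Two steps deserve more care, however. First, your justification of the lower bound $\gldim H\geq d$ via ``$\mathrm{Ext}^d_H(k,k)\neq 0$ from the lantern'' is not a direct implication: the lantern lives in $\gr_c(H)^*$, and the filtered-to-graded spectral sequence only gives \emph{upper} bounds $\dim\mathrm{Ext}^i_H(k,k)\leq \dim\mathrm{Ext}^i_{\gr_c(H)}(k,k)$, so non-vanishing does not descend for free. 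The paper instead deduces $\mathrm{Ext}^d_H({}_Hk,H)\neq 0$ from the Cohen--Macaulay property already established in part (2), via $\gkdim({}_Hk)+\min\{i\mid \mathrm{Ext}^i_H({}_Hk,H)\neq 0\}=\gkdim H=d$; since you have the CM property available at that point, you should use it. Second, for AS-regularity the substantive remaining point after part (3) is $\dim\mathrm{Ext}^d_H({}_Hk,H)\leq 1$; rather than invoking a generic ``AS-regularity transfers along filtrations'' principle, the paper gets this from Bj\"ork's result that a good filtration on $\mathrm{Ext}^d_H({}_Hk,H)$ has associated graded module a subfactor of $\mathrm{Ext}^d_{\gr_c(H)}(k,\gr_c(H))$, which is one-dimensional. (Relatedly, the paper obtains the skew Calabi--Yau property directly from the filtered-graded transfer in part (2), not as a consequence of AS-regularity plus the Hopf structure.) With these two steps repaired, your argument coincides with the paper's.
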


\begin{proof}
By \cite[Proposition 6.4]{Zh}, Proposition \ref{gkdim-equal} and Proposition \ref{graded-commutative-Hopf-algebra},  $\gr_c(H)$ is isomorphic as a graded algebra to the graded polynomial algebra in graded variables $x_1,\cdots, x_d$.
For any algebra $R$,  there is  a natural filtration $\mathcal{F} = \{~ H_{(n)} \otimes R ~\}_{n\geq 0}$ on $H\otimes R$ and it is easy to check that  $$\gr_{\mathcal{F}} (H\otimes R) \cong \gr_c (H) \otimes R \cong R[x_1,\cdots, x_d].$$
Then Part (1) follows by \cite[Theorem 1.6.9]{MR} and \cite[Theorem 1.2.9]{MR}.

It is well-known that a filtered algebra is  Auslander-regular, Cohen-Macaulay and skew $d$-Calabi-Yau if  the associated graded algebra is so, provided that the filtration is positive and each layer is finite dimensional. See \cite[Proposition 7.6]{ZSL1} and \cite[Theorem 4.4]{ZSL2} for references. Since  $k[x_1,\cdots, x_d]$ is Auslander-regular, Cohen-Macaulay and $d$-Calabi-Yau, Part (2) follows.

By \cite[Lemma 6.5.6, Corollary 7.6.18]{MR}, $H$ is of global dimension and Krull dimension at most $d$. Since $H$ is Cohen-Macaulay, by definition one has $$\gkdim ({}_Hk) + \min\{ ~ i ~|~  \text{\rm Ext}^i_H({}_Hk, H) \neq 0 ~\} =\gkdim(H).$$ It follows that  $\text{\rm Ext}^{i}_H({}_Hk, H) =0$ for $i<d$ and  $\text{\rm Ext}^{d}_H({}_Hk, H) \neq 0$. Therefore, the global dimension of $H$ is $d$, and Part (3) is proved.

To see Part (4) it remains to show $\dim \text{\rm Ext}^{d}_H({}_Hk, H) \leq 1.$
By \cite[Proposition 3.1]{Bj}, there is a good right $H$-module filtration  on $\text{\rm Ext}^{d}_H({}_Hk, H)$ such that the associated graded right $\gr_c(H)$-module of $\text{\rm Ext}^{d}_H({}_Hk, H)$ with respect to this filtration  is a subfactor of $\text{\rm Ext}_{\gr_c(H)}^d (k, \gr_c(H))$. It is well-known that $\gr_c(H)=k[x_1,\cdots, x_n]$ is Artin-Schelter regular  of dimension $d$ as a connected graded algebra, and it follows that  $\text{\rm Ext}_{\gr_c(H)}^d (k, \gr_c(H))$ is of dimension one.  Thus $\dim \text{\rm Ext}^{d}_H({}_Hk, H) \leq 1$ as required.
\end{proof}

\section{Proof of Proposition \ref{quasi-Lie-imply-LK}}
\label{section-quasi-Lie-imply-LK}

This section  is devoted to prove Proposition \ref{quasi-Lie-imply-LK}. The argument has its origin in \cite{Kh, Uf}.  The notations and conventions employed in Section \ref{section-proof-structure-PBW-generator} are retained. So $X$ is a well-ordered alphabet and the free algebra $k\langle X\rangle$ is connected graded with letters homogeneous of positive degrees.

\begin{lemma}\label{killing-functional}
Let $I$ be an ideal of $k\langle X\rangle$. For every Lyndon word $u$ and any integer $l>0$,
$$
\sum_{i=0}^{l-1} \big (k\langle X\rangle^{< u} \cdot [u]^i \big )_{\leq \deg(u^l)} \subseteq I + \text{\rm Span of }~  \{ ~ [w] ~ | ~ w \text{ is $I$-irreducible}, ~ w\neq  [u]^l ~\}.
$$
\end{lemma}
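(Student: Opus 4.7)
The strategy is to identify an explicit spanning set for the left-hand side, verify that every word indexing this set lies strictly below $u^l$ in the graded lex order, and then carry out a Gr\"obner-style descent modulo $I$.

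First I would describe the spanning set. By Lemma \ref{reordering-bracketing}, the subalgebra $k\langle X\rangle^{<u}$ is spanned by $\{[v]:v\in\B,\text{ every Lyndon factor of }v\text{ is }<_{\lex}u\}$, and the identity $[w_1\cdots w_m]=[w_1]\cdots[w_m]$ for nondecreasing Lyndon sequences gives $[v]\cdot[u]^i=[vu^i]$. It follows that the LHS is spanned by the set
\[
S:=\{[w]:w\in\B,\text{ every Lyndon factor of }w\text{ is }\le_{\lex}u,\text{ fewer than }l\text{ of them equal }u,\ \deg(w)\le\deg(u^l)\}.
\]

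Second, for any such $w$ I claim $w<_{\glex}u^l$. Writing the Lyndon decomposition as $w=v_1\cdots v_k u^i$ with $v_j<_{\lex}u$ and $0\le i<l$: if $k\ge 1$, Lemma \ref{lex-order-Lyndon} applied with first-disagreement index $j=1$ (since $v_1<_{\lex}u$) yields $w<_{\lex}u^l$, which together with $\deg(w)\le\deg(u^l)$ forces $w<_{\glex}u^l$; if $k=0$, then $w=u^i$ has strictly smaller degree than $u^l$, so $w<_{\glex}u^l$ trivially.

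Third, I would prove by transfinite induction on $\le_{\glex}$ the word-level statement: every word $w$ with $w<_{\glex}u^l$ lies in $I+\Span\{[w']:w'\text{ is $I$-irreducible},\ w'\neq u^l\}$. If $w$ is $I$-irreducible, then by Lemma \ref{bracketing-leading}, $w=[w]-([w]-w)$, where $[w]$ is itself a basis spanning element distinct from $[u^l]$ (as $w\neq u^l$) and $[w]-w$ is a combination of words $<_{\lex}w$ of the same degree, hence $<_{\glex}w<_{\glex}u^l$ and handled by the inductive hypothesis. If $w$ is $I$-reducible, pick a monic $g\in I$ with $\lw(g)=w$ and write $w=g-(g-w)$: the polynomial $g\in I$ contributes nothing to the span, while $g-w$ is a combination of words $<_{\glex}w<_{\glex}u^l$, again treated by induction. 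Applying this word-level claim to each $[w]$ in $S$ via the expansion $[w]=w+([w]-w)$ (both summands being sums of words $<_{\glex}u^l$) finishes the proof.

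The delicate point is the identification of the spanning set in the first two steps: the strict inequality $<_{\lex}u$ in the definition of $k\langle X\rangle^{<u}$, rather than $\le_{\lex}u$, is precisely what forces every Lyndon factor of $w$ other than the permitted copies of $u$ to be strictly below $u$ in lex, and this combined with the degree bound pins $w$ strictly below $u^l$ in graded lex. Without this strict control, words with Lyndon factors $>_{\lex}u$ of the right total degree could slip into the would-be spanning set and genuinely fail the conclusion, so the matching of the inequalities is the real content; once it is in hand, the reduction modulo $I$ is a routine descent.
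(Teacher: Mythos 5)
Your proof is correct and follows essentially the same route as the paper's: the paper likewise first establishes, by induction on the graded lex order, that every word $<_{\glex}u^l$ lies in the right-hand side, and then uses Lemma \ref{reordering-bracketing} together with Lemmas \ref{lex-order-Lyndon} and \ref{bracketing-leading} to see that any element of the left-hand side is supported on such words. Your explicit description of the spanning set $S$ and the verification that each indexing word is $<_{\glex}u^l$ is just a slightly more itemized presentation of the same argument.
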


\begin{proof}
Firstly we prove that every word $v$ with  $v<_{\glex} u^l$ is contained in the space on the right of the inclusion, by induction on all such words with respect to the graded lex order. Clearly, the empty word is in the space of the right hand. For any word $v<_{\glex} u^l$, there are two situations. If $v$ is $I$-reducible, there exists an element $g_v\in I$ such that $\mathrm{LW}(g_v)=v$ and $v=g_v+(v-g_v)$; If $v$ is $I$-irreducible, then $[v]$ is contained in the space of the right hand and $v=[v]+(v-[v])$. By Lemma \ref{bracketing-leading} and the induction hypothesis, $v$ lies in the space on the right of the inclusion in both situations.

Now let $f$ be an arbitrary nonzero element of the space on the left of the inclusion.  By Lemma \ref{reordering-bracketing},
$$
f  =  \sum  \lambda_{(a_1^{(i)},\cdots, a_p^{(i)})} ~ [a_1^{(i)}]\cdots [a_p^{(i)}] [u]^i, \quad \lambda_{(a_1^{(i)},\cdots, a_p^{(i)})} \in k,
$$
where $i$ runs over integers from $0$ to $l-1$ and $(a_1^{(i)}, \cdots, a_{p}^{(i)} )$ runs over nondecreasing sequence of Lyndon words that $<_{\lex} u$ and of total degree $\leq (l-i) \cdot \deg(u)$. By Lemma \ref{lex-order-Lyndon} and Lemma \ref{bracketing-leading}, the leading word of $f$ is $<_{\glex} u^l$, and so $f$ is contained in the space on the right of the inclusion.
\end{proof}

Let $I$ be a proper ideal of $k\langle X\rangle$.
For a Lyndon  word $u\in \mathbb{L}$, define the \emph{height} of $u$ by
\[
h_I(u):= \min\{ ~ n\geq 1 ~ | ~  u^n  ~ \text{is $I$-reducible} ~ \}.
\]
By the convention,
$h_I(u) =\infty$ if there is no integer $n$ such that $u^n$ is $I$-reducible.  Note that $h_I(u)=1$ if  $u$ is $I$-reducible; and $h_I(u)\geq 2$ for every $I$-irreducible Lyndon word $u$.
Write  $$ \C_I:= \{~ u_1^{e_1} \cdots u_r^{e_r} ~|~ r\geq 0, ~ u_1<_{\lex} \cdots <_{\lex } u_r,  ~ u_i \in \N_I \text{ and } 0 \leq e_i < h_I(u_i) ~\}.$$
Note that $\C_I\subseteq \B_I$ and it contains all $I$-irreducible words. So by Lemma \ref{standard-basis-2},  the residue classes of the standard bracketing of words in $\C_I$ span the quotient algebra $k\langle X\rangle/ I$.

\begin{lemma}\label{quasi-Lie-basis}
Let $I$ be a proper ideal of $k\langle X\rangle$ and $A: = k\langle X\rangle/I$. Assume that there exists a  triangular comultiplication  $\Delta$ on $k\langle X\rangle$ such that $\Delta(I) \subseteq k\langle X\rangle \otimes I + I \otimes k\langle X\rangle$. Then $\{~ [w]+I ~| ~ w\in \C_I ~\}$ is a basis of   $A$. Consequently, $\C_I$ equals the set of $I$-irreducible words.
\end{lemma}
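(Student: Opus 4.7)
The spanning claim is already observed in the paragraph immediately preceding the lemma. In view of Lemma \ref{standard-basis-2}, which supplies a basis of $A$ indexed by the $I$-irreducible words (a subset of $\C_I$), linear independence of the larger family $\{[w]+I:w\in\C_I\}$ is equivalent to the set equality $\C_I=\{I\text{-irreducible words}\}$ --- equivalently, to showing every $w\in\C_I$ is $I$-irreducible. I would prove this equivalent statement by $\leq_{\glex}$-minimal counterexample.

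Suppose, toward a contradiction, that some $w=u_1^{e_1}\cdots u_r^{e_r}\in\C_I$ is $I$-reducible, and choose $w$ that is $\leq_{\glex}$-minimal with this property. By minimality, every $\C_I$-word strictly smaller than $w$ is $I$-irreducible, so Lemma \ref{standard-basis-2} furnishes a unique polynomial $g\in I$ of the form $g=[w]-\sum_{w'\in\C_I,\,w'<_{\glex}w}\mu_{w'}[w']$. The hypothesis $\Delta(I)\subseteq I\otimes k\langle X\rangle+k\langle X\rangle\otimes I$ then yields $(\pi\otimes\pi)\Delta(g)=0$ in $A\otimes A$, where $\pi\colon k\langle X\rangle\to A$ is the quotient map.

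The central calculation uses $[w]=[u_1]^{e_1}\cdots[u_r]^{e_r}$ and Proposition \ref{comultiplication}: each $\Delta([u_i]^{e_i})$ decomposes as a ``diagonal'' piece $\sum_{p}\binom{e_i}{p}[u_i]^p\otimes[u_i]^{e_i-p}$ plus error terms whose tensor legs contain a positive-degree factor from $k\langle X\rangle^{<u_i}$ or drop in total degree. Multiplying the $r$ factors out and applying Lemma \ref{killing-functional} separately on each tensor leg at $l=e_i<h_I(u_i)$ rewrites every error, modulo $I\otimes k\langle X\rangle+k\langle X\rangle\otimes I$, as a combination of $[v']\otimes[v'']$ with $v',v''\in\C_I$ and $v',v''<_{\glex}w$. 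A parallel expansion of each $\Delta([w'])$ appearing in $g$ shows that its contribution to $(\pi\otimes\pi)\Delta(g)$ is supported on $\C_I\otimes\C_I$-basis pairs that are strictly smaller than $w$ on both legs. Projecting the identity $(\pi\otimes\pi)\Delta(g)=0$ onto a carefully chosen coordinate $[v_1]+I\otimes[v_2]+I$ --- one whose coefficient in $\Delta([w])$ is forced to be nonzero --- then extracts a nontrivial relation among strictly smaller $I$-irreducible brackets, exhibiting some smaller $\C_I$-word as $I$-reducible and contradicting the minimal choice of $w$.

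The main obstacle is this final extraction. The binomials in the ``diagonal'' of $\Delta([u_i]^{e_i})$ may vanish in positive characteristic (the lemma makes no characteristic assumption), so the projection cannot simply pick out an interior diagonal entry. The resolution I expect is to exploit that the ``corner'' coefficients $\binom{e_i}{0}=\binom{e_i}{e_i}=1$ are always nonzero, together with Lemma \ref{killing-functional}'s characteristic-free isolation of each power $[u_i]^l$ on the tensor legs --- exactly the feature its formulation is tailored to exploit.
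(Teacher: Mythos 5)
Your reduction of the lemma to the assertion that every word of $\C_I$ is $I$-irreducible is sound, and your toolkit --- Proposition \ref{comultiplication}, Lemma \ref{killing-functional}, a linear functional applied to one tensor leg, and a contradiction against Lemma \ref{standard-basis-2} --- is exactly the right one; the paper runs the same machine, organized as an induction on degree applied to an arbitrary nonzero element of $U_{\leq n}\cap I$ where $U_{\leq n}$ is the span of $\{[w] : w\in \C_I,\ \deg(w)\leq n\}$, rather than to a single minimal relation. Your observation about the corner binomial coefficients is also correct and is precisely why no characteristic hypothesis is needed here.

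There is, however, a genuine gap at the decisive step: the choice of the coordinate to project onto. You propose a coordinate ``whose coefficient in $\Delta([w])$ is forced to be nonzero,'' i.e.\ one built from the Lyndon factors of $w$ itself, say with right leg $[u_r]^{e_r}$ where $u_r$ is the largest factor of $w=u_1^{e_1}\cdots u_r^{e_r}$. The trouble is that the lower terms $[w']$ of your relation $g$, although $w'<_{\glex}w$, may have Lyndon factors that are lexicographically \emph{larger} than every factor of $w$ (for instance $w=(xy)(xy)$ against $w'=(x)(x)(y)(y)$ with $x<y$, where $y>_{\lex}xy$). For such a $w'$ with largest factor $v>_{\lex}u_r$, Proposition \ref{comultiplication} places the error terms of $\Delta([w'])$ in $k\langle X\rangle^{<v}\otimes k\langle X\rangle^{<v}\cdot[v]^s$, and $k\langle X\rangle^{<v}$ contains $[u_r]$; these terms hit your chosen coordinate, and the functional sending $[u_r]^{e_r}\mapsto 1$ does not annihilate them, because Lemma \ref{killing-functional} applied to $v$ only produces elements of $I$ plus the span of brackets $[w'']$ with $w''\neq v^{l}$ --- a span on which your functional is not zero. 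The repair, which is what the paper does, is to take $u$ to be the lexicographically largest Lyndon word occurring in the Lyndon decompositions of \emph{all} top-degree terms of the relation and $l$ its maximal multiplicity there, and to project onto $(\,\cdot\,)\otimes[u]^{l}$; then Lemma \ref{killing-functional} kills every unwanted contribution uniformly. With this correct choice the coefficient contributed by $\Delta([w])$ may well be zero --- what survives is $\sum\gamma_{v}[v]$ over those terms $vu^{l}$ ending in exactly $u^{l}$, whichever they are --- so the criterion ``nonzero coefficient in $\Delta([w])$'' must be abandoned. One must also dispose separately of the degenerate case $\deg(u^{l})=\deg(w)$, which forces the top term to equal $u^{l}$ and is ruled out by the definition of $\C_I$. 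These points aside, your minimal-counterexample bookkeeping (using minimality both to legitimize the functional on degrees $\leq\deg(u^{l})$ and to certify that the leading word of the projected element is irreducible) can be made to close.
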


\begin{proof}
It remains to see elements in the set $\{~ [w]+I ~| ~ w\in \C_I ~\}$ are linearly independent. For $n\geq 0$, let  $$\C_I^n := \{~ w\in \C_I  ~| ~  \deg(w) =  n ~\}$$  and $U_{\leq n}$ the subspace of $k\langle X\rangle_{\leq n}$ spanned by $\{~ [w]  ~| ~ w\in  \C_I^{\leq n} ~\}$.  To see the result it suffices to show $U_{\leq n} \cap I = 0$ for each integer $n\geq 0$.  We proceed by induction on $n$. It is clear that $U_{\leq 0}\cap I=0$ because $I$ is proper.

Now assume $n > 0 $. Assume  $ U_{\leq n} \cap I$ has a nonzero element $f$. Then,
\[
f\in  \sum_{w\in \C_I^n} \lambda_w [w] + k\langle X\rangle_{<n}, \quad \lambda_w\in k
\]
with some $\lambda_w$ nonzero. Choose $u$ as the lexicographically biggest Lyndon word occurring in the Lyndon decomposition of words $w\in \C_I^n$ with $\lambda_w\neq 0$. Then $u$ occurs in the Lyndon decompositions of words $w\in \C_I^n$ with $\lambda_w\neq 0$ only at the end.  Let $l$ be the maximal number of occurrences of $u$ in a Lyndon decomposition of word $w\in \C_I^n$ with $\lambda_w\neq 0$. Thus we can express  $f$ in the following way:
\[
f \in  \sum_{i=0}^l  \sum_{w_i\in P_i} \gamma_{w_i} [w_i] [u]^{i} + k\langle X\rangle_{<n},
\]
where $$P_i = \{~w \in \C_I^{n-\deg(u^i)}~|~ \text{Lyndon words in the Lyndon decomposition of $w$ are } <_{\lex} u, ~ \lambda_{wu^i} \neq 0 ~\}$$ and $\gamma_{w_i} := \lambda_{w_iu^{i}} $. Note that
$$P_l \neq \emptyset \quad  \text{and} \quad [w_i] \in k \langle X\rangle^{<u}_{n-\deg(u^i)}. $$
If  $n= l\cdot \deg(u)$, then $P_l=\{1\}$ and $\lw(f) =u^l$ by Lemma \ref{lex-order-Lyndon}, which contradicts $l<h_I(u)$. Thus we must have $n> \deg(u^l)$, and  the words $w_l$ are all of positive degree $n-\deg(u^l)$.

Now by Proposition \ref{comultiplication},
\begin{eqnarray*}
\Delta(f) &\in & \sum_{w_l\in P_l} \gamma_{w_l} ~ \Delta([w_l]) \cdot \Delta ([u]^{l}) + \sum_{0\leq i < l}  \sum_{w_i\in P_i} \gamma_{w_i} ~ \Delta([w_i]) \cdot \Delta ([u]^{i}) + \big (k\langle X\rangle \otimes k\langle X\rangle \big )_{<n}  \\
&\subseteq& \sum_{w_l\in P_l} \gamma_{w_l} ~ [w_l]  \otimes [u]^l ~ + ~ \Big ( k\langle X\rangle^{<u} \otimes k\langle X\rangle_{\geq 1}^{<u}\cdot [u]^l \Big )_n \\
&& + \sum_{0\leq i<l} \big ( k\langle X\rangle^{\leq u} \otimes k\langle X\rangle^{<u}\cdot [u]^i \big)_n + \big (k\langle X\rangle \otimes k\langle X\rangle \big )_{<n}.
\end{eqnarray*}
By the induction hypothesis, $U_{\leq \deg(u^l)}$ is a complement of $I \cap k\langle X\rangle_{\leq \deg(u^l)}$ in $k\langle X\rangle_{\leq \deg(u^l)}$ with a basis  $\{~ [w]  ~| ~ w\in  \C_I^{\leq \deg(u^l)} ~\}$.
So one may define a linear functional $\rho:k\langle X\rangle\to k$ as
\begin{itemize}
\item $ \rho(I)=\rho(k\langle X\rangle_{>  \deg(u^l)}) =  0$, $ \rho([w])=0$ for $w\in \C_I^{\leq  \deg(u^l)} \backslash \{ u^l \}$ and  $\rho([u]^l) =1$.
\end{itemize}
Then by Lemma \ref{killing-functional},
$$
\rho \big( \sum_{i=0}^{l-1} k\langle X\rangle^{<u} [u]^i \big) \subseteq \rho \Big( \sum_{i=0}^{l-1} (k\langle X\rangle^{<u} [u]^i)_{\leq \deg(u^l)} + k\langle X\rangle_{>  \deg(u^l)} \Big ) =0.$$
It is easy to see $\rho(k\langle X\rangle_i) =0$ for $i\neq \deg(u^l)$, so
$$(\id\otimes \rho) \Big ( (k\langle X\rangle \otimes k\langle X\rangle)_{<n} \Big ) \subseteq k\langle X\rangle_{< n-  \deg(u^l)}.$$
Applying  the linear map $\id\otimes \rho: k\langle X\rangle \otimes  k\langle X\rangle  \to k\langle X\rangle$ to $\Delta(f)$, one gets
\[
\tilde{f}:= (\id\otimes \rho) \Big(\Delta(f) \Big)  \in \sum_{w_l\in P_l} \gamma_{w_l} ~ [w_l]  + k\langle X\rangle_{< n- \deg(u^l)}.
\]
The leading word of $\tilde{f}$ is then contained in $P_l$, so it is $I$-irreducible. Therefore $\tilde{f} \not \in I$.
But $\Delta(f)\in I \otimes  k\langle X\rangle  +  k\langle X\rangle \otimes I$. It  follows that $\tilde{f} \in I$, a contradiction.
\end{proof}

\begin{lemma}\label{triangular-soft}
Let $I$ be a proper ideal of $k\langle X\rangle$. Assume that there exists a  triangular comultiplication  $\Delta$ on $k\langle X\rangle$ such that $\Delta(I) \subseteq k\langle X\rangle \otimes I + I \otimes k\langle X\rangle$. Then for every  Lyndon word $v$ with finite height $n$,
\[
[v]^n +I \in k\langle X\rangle^{<v}_{\deg(v^n)} + k\langle X\rangle_{<\deg(v^n)} +I.
\]
\end{lemma}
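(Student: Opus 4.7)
The plan is to reduce to the graded setting and then adapt the functional argument from the proof of Lemma~\ref{quasi-Lie-basis}. Since the triangular $\Delta$ preserves the degree filtration on $k\langle X\rangle$, passing to associated graded produces a graded triangular $\bar\Delta$ satisfying $\bar\Delta(\gr I)\subseteq \gr I\otimes k\langle X\rangle+k\langle X\rangle\otimes \gr I$, and the graded conclusion $[v]^n\in k\langle X\rangle^{<v}_{\deg(v^n)}+\gr I$ lifts to the stated one. Replacing $(I,\Delta)$ by $(\gr I,\bar\Delta)$, I assume $I$ is homogeneous and $\Delta$ is graded. By Lemma~\ref{quasi-Lie-basis} there is a unique expansion
\[ [v]^n\ \equiv\ \sum_{w\in\C_I,\,\deg(w)=\deg(v^n)}\alpha_w\,[w] \pmod{I},\]
so it suffices to show every $w\in\C_I$ with $\deg(w)=\deg(v^n)$ and $\alpha_w\neq 0$ has all Lyndon factors $<_{\lex}v$ (so that $[w]\in k\langle X\rangle^{<v}_{\deg(v^n)}$).

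Argue by contradiction. Among all such ``bad'' $w$ (with a Lyndon factor $\geq_{\lex}v$), let $u\geq_{\lex}v$ be the lex-largest Lyndon factor appearing and $l\geq 1$ the maximum multiplicity of $u$ in the Lyndon decomposition of any bad $w$; then $u\in\N_I$, $l<h_I(u)$, and $l\leq n-1$ whenever $u=v$. Using the direct-sum decomposition $k\langle X\rangle=I\oplus\bigoplus_{w'\in\C_I}k[w']$ provided by Lemma~\ref{quasi-Lie-basis}, define the linear functional $\rho:k\langle X\rangle\to k$ by $\rho(I)=0$, $\rho([u]^l)=1$, and $\rho([w'])=0$ for every $w'\in\C_I\setminus\{u^l\}$. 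Homogeneity forces $\rho(k\langle X\rangle_d)=0$ for all $d\neq\deg(u^l)$, and Lemma~\ref{killing-functional} further gives $\rho\bigl((k\langle X\rangle^{<u}[u]^i)_{\leq\deg(u^l)}\bigr)=0$ for each $i<l$.

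Set $f:=[v]^n-\sum_w\alpha_w[w]\in I$. Since $\Delta(f)\in I\otimes k\langle X\rangle+k\langle X\rangle\otimes I$ and $\rho(I)=0$, the element $(\id\otimes\rho)(\Delta(f))$ lies in $I$. On the other hand, I expand $\Delta([v]^n)$ and each $\Delta([w])=\Delta([w'])\Delta([u]^e)$ (with $w=w'u^e$, $e\geq 1$) via Proposition~\ref{comultiplication} and apply $\id\otimes\rho$ term by term. Every correction tensor whose right factor lies in $k\langle X\rangle^{<u}[u]^s$ is killed by $\rho$: when $s<l$ and the right factor has degree $\leq\deg(u^l)$ by Lemma~\ref{killing-functional}, and otherwise by homogeneity. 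The surviving principal contributions leave $\binom{n}{l}[v]^{n-l}$ (when $u=v$, from the $p=n-l$ summand of $\sum_p\binom{n}{p}[v]^p\otimes[v]^{n-p}$) minus $\sum_{e\geq l}\binom{e}{l}\alpha_{w'u^e}[w']$; by the maximality of $l$ and $u$, this element is a nonzero linear combination of $[v]^{n-l}$ (if $u=v$) and $[w'_0]$ for certain $w'_0\in\C_I$ whose leading words lie in $\C_I$ and are thus $I$-irreducible by Lemma~\ref{quasi-Lie-basis}, contradicting membership in $I$. \textbf{The hard part} is the delicate bookkeeping in this case-split, verifying that every correction tensor from the Proposition~\ref{comultiplication} expansions is annihilated by $\id\otimes\rho$ and then isolating a nonzero top-degree witness whose leading word exhibits the contradiction via the maximality of $u$ and $l$.
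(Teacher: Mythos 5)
Your overall strategy is the same as the paper's: expand $[v]^n$ in the basis $\{[w]+I\}_{w\in\C_I}$ from Lemma \ref{quasi-Lie-basis}, isolate the lex-largest offending Lyndon factor $u$ with maximal multiplicity $l$, dualize with the functional $\rho$ picking out $[u]^l$, and apply $\id\otimes\rho$ to $\Delta$ of the relation to exhibit an element of $I$ with $I$-irreducible leading word. However, there is a genuine gap in the final ``nonzero witness'' step, precisely at the point where the paper inserts its claim that $\deg(v^n)>\deg(u_0^l)$. Suppose $u>_{\lex}v$ and $\deg(u^l)=\deg(v^n)$, i.e.\ $w=u^l$ itself occurs in the expansion with $\alpha_{u^l}\neq 0$ (so the only $w$ realizing multiplicity $l$ is $u^l$, with $w'=1$). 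Then your surviving element is the scalar $-\alpha_{u^l}$ --- \emph{except} that you have dropped the $p=0$ summand $1\otimes[v]^n$ of $\sum_p\binom{n}{p}[v]^p\otimes[v]^{n-p}$: since $\deg([v]^n)=\deg(u^l)$, homogeneity does not kill it, and $\rho([v]^n)=\sum_w\alpha_w\rho([w])=\alpha_{u^l}$. This contributes $+\alpha_{u^l}$, which exactly cancels $-\alpha_{u^l}$, so the image of $\Delta(f)$ under $\id\otimes\rho$ can be $0\in I$ and no contradiction results. (Your bookkeeping only records contributions of $\Delta([v]^n)$ ``when $u=v$''; the $p=0$ term must be accounted for when $u>_{\lex}v$ as well.) To close the gap you need the paper's preliminary observation: since $f=[v]^n-\sum_w\alpha_w[w]\in I$, its leading word must be $I$-reducible, and as all $w\in\C_I$ are $I$-irreducible this forces $w<_{\lex}v^n$ for every $w$ with $\alpha_w\neq 0$; but $u>_{\lex}v$ gives $u^l>_{\lex}v^n$ by Lemma \ref{lex-order-Lyndon}, so $u^l$ cannot occur and hence $\deg(v^n)>\deg(u^l)$, which is what makes $\rho([v]^n)=0$ by degree. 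This leading-word comparison is an essential ingredient, not a routine verification.

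A secondary issue: the opening reduction to the homogeneous case is asserted, not proved. That the degree-$0$ part $\bar\Delta$ of $\Delta$ satisfies $\bar\Delta(\gr I)\subseteq \gr I\otimes k\langle X\rangle+k\langle X\rangle\otimes\gr I$ requires knowing $\gr\bigl(I\otimes k\langle X\rangle+k\langle X\rangle\otimes I\bigr)=\gr I\otimes k\langle X\rangle+k\langle X\rangle\otimes\gr I$; the inclusion $\supseteq$ is clear but $\subseteq$ needs an argument (e.g.\ lifting a graded complement of $\gr I$ to a filtration-compatible complement of $I$ and comparing dimensions). This is fixable, but note the paper avoids the reduction altogether by carrying the $(k\langle X\rangle\otimes k\langle X\rangle)_{<\deg(v^n)}$ error terms through the computation, which costs little since $\rho$ kills all degrees other than $\deg(u^l)$ anyway.
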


\begin{proof}
Suppose that there is a Lyndon word $v$ of finite height $n$ such that the required containment does not hold. So $v^n\not\in \mathcal{C}_I$. By  Lemma \ref{standard-basis-2} and Lemma \ref{quasi-Lie-basis}, $[v]^n+I$ can be expanded uniquely as  $$[v]^n +I=\sum\nolimits_{w\in \mathcal{C}_I^{\deg(v^n)}} \lambda_{w}\cdot([w]+I) + \sum\nolimits_{w'\in \mathcal{C}_I^{<\deg(v^n)}} \lambda_{w'}\cdot([w']+I).$$
By assumption, there exists $w\in \mathcal{C}_I^{\deg(v^n)}$ such that $\lambda_w\neq 0$. Let $u_0$ be the largest Lyndon word that occurs in the Lyndon factorization of any $w\in \mathcal{C}_I^{\deg(v^n)}$ such that $\lambda_w\neq 0$. Note that the powers of $u_0$ necessarily occurs at the end of the Lyndon factorization of these $w\in \mathcal{C}_I^{\deg(v^n)}$ such that $\lambda_w\neq 0$.
 Let $l$ be the maximal occurrences of $u_0$ in the Lyndon factorization of any $w\in \mathcal{C}_I^{\deg(v^n)}$ such that $\lambda_w\neq 0$.
Then $u_0\in \mathcal{N}_I$, $1\leq l< h_I(u_0)$,  and $[v]^n$ can be expressed as following:
\[
 [v]^n \in \sum_{i=0}^{l}  \sum_{w_i\in Q_i} \gamma_{w_i} [w_i] [u_0]^{i} + k\langle X\rangle_{<\deg(v^n)} +I, \quad   \gamma_{w_i} \in k,
\]
where $\gamma_{w_i}\neq 0$ and
$$Q_i \subseteq   \{~w \in \C_I^{\deg(v^n)- \deg(u_0^i)}~|~ \text{Lyndon words in the Lyndon decomposition of $w$ are } <_{\lex} u_0 ~\}.$$
Clearly, $Q_l\neq \emptyset$ by construction. By the assumption, it is necessary that $$v\leq_{\lex} u_0,$$ or otherwise $[v]^n+I$ would have the required containment. Also, since $w_iu_0^i\in \mathcal{C}_I$ for all $i\leq l$ and $w_i\in Q_i$ it follows by Lemma \ref{bracketing-leading} that
\begin{equation*}\label{lll}
w_iu_0^i = \lw([w_i][u_0]^i) <_{\lex} \lw([v]^n) = v^n \tag{*}
\end{equation*}
for all $i\leq l$ and $w_i\in Q_i$. Therefore, there is $g\in I$ with $\lw(g) =v^n$ and
\[
g\in  [v]^n - \sum_{i=0}^{l}  \sum_{w_i\in Q_i} \gamma_{w_i} [w_i] [u_0]^{i} + k\langle X\rangle_{<\deg(v^n)}.
\]
Now we claim that
\[\deg(v^n)>\deg(u_0^l) \quad \text{ or equivalently }\quad 1\not\in Q_l.\] Indeed, if it is not so, then $\deg(v^n)=\deg(u_0^l)$ and $u_0>_{\lex} v$, and then by Lemma \ref{lex-order-Lyndon} one would have $w_lu_0^l = u_0^l>_{\lex} v^n$, which contradicts the relation (\ref{lll}) obtained above.

By Proposition \ref{comultiplication},
\begin{eqnarray*}
\Delta(g) &\in& \Delta([v]^n) - \sum_{0\leq i \leq l}  \sum_{w_i\in Q_i} \gamma_{w_i} ~ \Delta([w_i]) \cdot \Delta ([u_0]^{i}) + \Delta\Big(k\langle X\rangle_{<\deg(v^n)}\Big) \\
&\subseteq& \sum_{p=0}^n \binom{n}{p} ~  [v]^p \otimes [v]^{n-p} -  \sum_{w_l\in Q_l} \gamma_{w_l} ~ [w_l]  \otimes [u_0]^l \\
  && +  \sum_{r, s ~ \geq~ 0 \atop r+s ~<~ n } \Big( k\langle X \rangle_{\geq 1}^{< v} \cdot  [v]^r   \otimes k\langle X\rangle_{\geq 1}^{< v} \cdot [v]^s \Big )_{\deg(v^n)}    +  \Big ( k\langle X\rangle^{<u_0} \otimes k\langle X\rangle_{\geq 1}^{<u_0}\cdot [u_0]^l \Big)_{\deg(v^n)}  \\
&&   + \sum_{0\leq i<l} \big ( k\langle X\rangle^{\leq u_0} \otimes k\langle X\rangle^{<u_0}\cdot [u_0]^i \big)_{\deg(v^n)} +(k\langle X\rangle\otimes k\langle X\rangle)_{<\deg(v^n)}.
\end{eqnarray*}
By Lemma \ref{standard-basis-2} and Lemma \ref{quasi-Lie-basis}, one may define a linear functional $\rho: k\langle X\rangle \to k$  as follows.
\begin{itemize}
\item $\rho(I)= \rho(k\langle X\rangle_{>\deg(u_0^l)}) =0$,  $\rho([w])=0$ for $w\in \C_I \backslash \{ u_0^l \}$ and  $\rho([u_0]^l) =1$.
\end{itemize}
By Lemma \ref{killing-functional} and Lemma \ref{quasi-Lie-basis}, one has $$\rho(\sum_{i=0}^{l-1}k\langle X\rangle^{<u_0}\cdot [u_0]^i) =0.$$ In particular, $\rho(k\langle X\rangle^{<u_0})=0$. If $v<_{\lex} u_0$ then clearly $\id\otimes \rho$ must kill the terms  except the second and sixth ones in the expression of $\Delta(g)$ as above, and whence
\begin{equation*}
(\id\otimes \rho) \Big(\Delta(g) \Big)  \in - \sum_{w_l\in Q_l} \gamma_{w_l} ~ [w_l] + k\langle X\rangle_{< \deg(v^n)- \deg(u_0^l)};
\end{equation*}
and if $v=u_0$ then $v$ is $I$-irreducible and so $n = h_I(v)=h_I(u_0)>l$, and one have  that
\[
(\id\otimes \rho) \Big(\Delta(g) \Big)  \in
 \binom{n}{n-l} \cdot [v]^{n-l} - \sum_{w_l\in Q_l} \gamma_{w_l} [w_l]+ k\langle X\rangle_{<\deg(v^{n-l})}.
\]
The leading word of $(\id\otimes \rho) \Big(\Delta(g) \Big)$ is obviously contained in $\{v^{n-l}\} \cup Q_l \subseteq \mathcal{C}_I$ in both cases, so
 $$(\id\otimes \rho) \Big(\Delta(g) \Big) \not \in I.$$
But $\Delta(g)\in I\otimes k\langle X\rangle + k\langle X\rangle\otimes I$, so $(\id\otimes \rho) \Big(\Delta(g) \Big) \in I$, a contradiction.
\end{proof}

\begin{lemma}\label{height}
Let $I$ be a proper ideal of $k\langle X\rangle$ and $u\in \N_I$ an $I$-irreducible Lyndon word. Assume  there is a  triangular comultiplication  $\Delta$ on $k\langle X\rangle$ such that $\Delta(I) \subseteq k\langle X\rangle \otimes I + I \otimes k\langle X\rangle$.
\begin{enumerate}
\item If $k$ is of characteristic $0$ then $h_I(u) = \infty$.
\item If $k$ is of characteristic $p>0$ then either $h_I(u) = \infty$ or $h_I(u) =p^s$ for some $s\geq1$.
\end{enumerate}
\end{lemma}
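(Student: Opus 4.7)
The plan is to assume $h_I(u)=n<\infty$ and show that $\binom{n}{p}\equiv 0$ in $k$ for every $0<p<n$. Since $u\in\N_I$ forces $n\geq 2$, this immediately contradicts characteristic zero (take $p=1$, whence $n=0$), and in characteristic $p_0>0$ Kummer's theorem gives exactly the criterion for $n$ to be a positive power of $p_0$. So both parts of the lemma reduce to proving this binomial vanishing.

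To extract the binomial coefficient, fix $0<p<n$ and imitate the functional-theoretic template already used in the proofs of Lemmas \ref{quasi-Lie-basis} and \ref{triangular-soft}. Using the direct-sum decomposition
\[
k\langle X\rangle = \bigl(I\cap k\langle X\rangle_{\leq \deg u^p}\bigr) \oplus \mathrm{Span}\{[w] : w\in\C_I^{\leq\deg u^p}\} \oplus k\langle X\rangle_{>\deg u^p}
\]
provided by Lemma \ref{quasi-Lie-basis} together with Lemma \ref{standard-basis-2}, I would declare $\rho_p:k\langle X\rangle\to k$ to vanish on the first and third summands and on $[w]$ for $w\in\C_I^{\leq\deg u^p}\setminus\{u^p\}$, with $\rho_p([u]^p)=1$; define $\sigma_{n-p}$ symmetrically. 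Now expand $\Delta([u]^n)$ via Proposition \ref{comultiplication}. The principal sum $\sum_{p'=0}^n\binom{n}{p'}[u]^{p'}\otimes[u]^{n-p'}$ collapses to the single $p'=p$ term, contributing $\binom{n}{p}$, because the remaining values of $\rho_p([u]^{p'})$ vanish either by the degree cutoff (for $p'=n$ or $p'>p$) or by the direct definition (for $p'<p$ and $p'=0$). The cross-term bulk is killed because $\rho_p$ annihilates each left factor $k\langle X\rangle^{<u}_i\cdot[u]^r$ with $i\geq 1$ and $r<n$: for $r<p$ Lemma \ref{killing-functional} with $l=p$ places it inside $I+\mathrm{Span}\{[w]:w\in\C_I,w\neq u^p\}\subseteq\ker\rho_p$, and otherwise the total degree exceeds $\deg u^p$. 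Finally, the remainder $L$ sits in total bidegree $<\deg u^n$ while $\rho_p\otimes\sigma_{n-p}$ is concentrated at bidegree $(\deg u^p,\deg u^{n-p})$ of total $\deg u^n$, so $L$ is killed. This yields $(\rho_p\otimes\sigma_{n-p})(\Delta([u]^n))=\binom{n}{p}$.

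On the other side, Lemma \ref{triangular-soft} writes $[u]^n\equiv f\pmod I$ for some $f=f_{\mathrm{top}}+f_<$ with $f_{\mathrm{top}}\in k\langle X\rangle^{<u}_{\deg u^n}$ and $f_<\in k\langle X\rangle_{<\deg u^n}$; since $\rho_p,\sigma_{n-p}$ both vanish on $I$ and $\Delta(I)\subseteq I\otimes k\langle X\rangle+k\langle X\rangle\otimes I$, the pairing is insensitive to the $I$-correction, giving $(\rho_p\otimes\sigma_{n-p})(\Delta([u]^n))=(\rho_p\otimes\sigma_{n-p})(\Delta(f))$. The piece $\Delta(f_<)$ sits in total bidegree $<\deg u^n$ and is killed by the bidegree argument; for $\Delta(f_{\mathrm{top}})$, the multiplicativity of $\Delta$ together with Proposition \ref{comultiplication} applied to each generator $[v]$ with $v<_{\lex}u$ puts the top-bidegree component of $\Delta(f_{\mathrm{top}})$ into $k\langle X\rangle^{<u}\otimes k\langle X\rangle^{<u}$, whose left factor at degree $\deg u^p$ is swept into $\ker\rho_p$ by the $i=0$, $l=p$ case of Lemma \ref{killing-functional}; the lower-bidegree leftover is killed again by bidegree. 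Equating the two evaluations therefore forces $\binom{n}{p}=0$ in $k$ for every $0<p<n$, and Kummer's theorem closes both parts of the lemma.

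The main obstacle will be the meticulous triple bookkeeping of degrees, bidegrees, and the three layers of Proposition \ref{comultiplication}'s expansion, together with confirming that the chosen functionals simultaneously (i) vanish on $I$, (ii) concentrate at a single homogeneous degree so that the bidegree arguments against $L$ and $\Delta(f_<)$ are legitimate, and (iii) annihilate the cross-terms of type $T$ through Lemma \ref{killing-functional}. Verifying that these three requirements are compatible, and chasing the lower-degree corrections produced by multiplicatively expanding $\Delta(f_{\mathrm{top}})$, is where most of the work in the actual proof will go.
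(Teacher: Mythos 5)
Your proposal is correct and follows essentially the same route as the paper: pair $\Delta(g)$, for $g\in I$ supplied by Lemma \ref{triangular-soft} with leading term $[u]^n$, against a tensor product of functionals dual to $[u]^a\otimes[u]^{n-a}$ (built from the basis of Lemma \ref{quasi-Lie-basis} and annihilated on the cross-terms via Lemma \ref{killing-functional}), and read off the vanishing of binomial coefficients in $k$. The only difference is cosmetic: you extract $\binom{n}{p}=0$ for all $0<p<n$ and invoke Kummer/Lucas, whereas the paper extracts only $\binom{n}{1}=n$ for characteristic $0$ and $\binom{p^st}{p^s}$ for characteristic $p$.
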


\begin{proof}
Suppose that $n=h_I(u)<\infty$. By Lemma \ref{triangular-soft}, there is a polynomial $g\in I$ such that   either
$$g\in [u]^n + k\langle X\rangle_{<\deg(u^n)},$$
or
\[
g \in  [u]^n + \sum_{i=0}^l  \sum_{w_i\in Q_i} \gamma_{w_i} [w_i] [u_0]^{i} + k\langle X\rangle_{<\deg(u^n)}, \quad \gamma_{w_i} \in k,
\]
with $\lw(g)= u^n$, where  $u_0\in \N_I$, $u_0<_{\lex} u$, $h_I(u_0) > l\geq 1$ and
$$Q_i= \{~w \in \C_I^{\deg(u^n)- \deg(u_0^i)}~|~ \text{Lyndon words in the Lyndon decomposition of $w$ are } <_{\lex} u_0 ~\}$$
and $\gamma_{w_l}\neq 0$ for some $w_l\in Q_{l}$. We only deal with
the second case (the first one is similar and much easier). By Proposition \ref{comultiplication},
\begin{eqnarray*}
\Delta(g) &\in& \Delta([u]^n) + \sum_{0\leq i \leq l}  \sum_{w_i\in Q_i} \gamma_{w_i} ~ \Delta([w_i]) \cdot \Delta ([u_0]^{i}) + \big (k\langle X\rangle \otimes k\langle X\rangle \big )_{<\deg(u^n)} \\
&\subseteq&  \sum_{0\leq p \leq n} \binom{n}{p} ~  [u]^p \otimes [u]^{n-p}   \\
&&  +  \sum_{i, j ~ \geq~ 0 \atop i+j ~<~ n }  \big( k\langle X \rangle^{< u}\cdot  [u]^i   \otimes k\langle X\rangle^{< u} \cdot [u]^j \big)_{\deg(u^n)} + \big (k\langle X\rangle \otimes k\langle X\rangle \big )_{<\deg(u^n)}.
\end{eqnarray*}
By Lemma \ref{quasi-Lie-basis},  one may define two linear functionals $\phi, ~ \psi: k\langle X\rangle \to k$ as follows.
\begin{itemize}
\item $\phi(I ) =\phi(k\langle X\rangle_{>\deg(u)}) =0$, $\phi([w]) =0$ for $w \in \C_I \backslash \{ u \}$ and $\phi([u]) =1$;
\item $\psi(I) = \psi( k\langle X\rangle_{>\deg(u^{n-1})})=0$, $\psi([w]) =0$ for $w\in \C_I \backslash \{ u^{n-1} \}$ and  $\psi([u]^{n-1})=1$.
\end{itemize}
By Lemma \ref{killing-functional},
$$
(\phi\otimes \psi) \Big(  \sum_{i,j ~ \geq~ 0 \atop i+j ~<~ n }  \big( k\langle X \rangle^{< u}\cdot  [u]^i   \otimes k\langle X\rangle^{< u} \cdot [u]^j \big)_n  \Big ) =0.$$
It is also easy to see $\phi(k\langle X\rangle_i) =0$ for $i\neq \deg(u)$ and $\psi(k\langle X\rangle_j) =0$ for $j\neq  \deg(u^{n-1})$, so
$$(\phi\otimes \psi) \Big ((k\langle X\rangle \otimes k\langle X\rangle)_{<\deg(u^n)} \Big ) =0.$$
Applying the linear map  $\phi\otimes \psi:k\langle X\rangle \otimes  k\langle X\rangle  \to k$ to $\Delta(g)$, one gets
\begin{equation}\label{equality-formula}
0 = (\phi\otimes \psi )(\Delta(g)) =n \cdot 1_k. \tag{PI}
\end{equation}
Here the first equality holds because $\Delta(g)\in I \otimes  k\langle X\rangle  +  k\langle X\rangle \otimes I$.

When $k$ is of characteristic $0$ then we have got a contradiction (\ref{equality-formula}), and so in this case one must have $h_I(u) = \infty$. This proves (1).

To see (2), we continue to assume $n=h_I(u)<\infty$. By  (\ref{equality-formula}), we may write $n=p^st$ for some $s\geq 1$ and $t$ a positive number that is not divisible by $p$. It remains to show $t=1$. So assume $t>1$. Define two linear functionals  $\phi', ~ \psi': k\langle X\rangle \to k$ as follows.
\begin{itemize}
\item $\phi'(I ) = \phi'(k\langle X\rangle_{>\deg(u^{p^s})})=0$, $\phi'([w]) =0$ for $w \in \C_I \backslash \{u^{p^s} \}$ and $\phi'([u]^{p^s}) =1$;
\item $\psi'(I) = \psi'(k\langle X\rangle_{>\deg(u^{p^s(t-1)})})=0$, $\psi'([w]) =0$ for $w\in \C_I \backslash \{u^{p^s(t-1)} \}$ and  $\psi'([u]^{p^{s}(t-1)})=1$.
\end{itemize}
By  Lemma \ref{killing-functional},
\[
0= (\phi'\otimes \psi')(\Delta(g)) = \binom{p^st}{p^s}\cdot 1_k.
\]
But $p$ doesn't divide $\binom{p^st}{p^s}$, a contradiction.
\end{proof}

\begin{proof}[Proof of Proposition \ref{quasi-Lie-imply-LK}]
Part (1)  follows by an induction on $v$ with respect to the graded lex order and Lemma \ref{triangular-soft}. Part (3) can be read directly from Lemma \ref{quasi-Lie-basis} and Lemma \ref{height} (1).

We show Part  (2) by the induction on the length of $u$.  If $|u|=1$, then $\sh(uv)=(u,v)$ and $[uv]=[u][v]- [v][u]$. If $uv$ is $I$-irreducible, $[uv]\in k\langle X|I\rangle_{\deg(uv)}^{\leq uv}$. If $uv$ is $I$-reducible, by Part (1) one obtains
\[
 [uv] \in k\langle X|I\rangle_{\deg(uv)}^{\leq uv} +k\langle X|I\rangle_{<\deg(uv)} + I.
\]

Now suppose $|u|>1$. If $\sh(uv) =(u,v)$ then the containment holds  by the same discussion as above. Otherwise,  Proposition \ref{fact-Lyndon} (L3, L4) tells us that $u_L>_{\rm lex} u_R>_{\rm lex} v$. One has
\begin{eqnarray*}
[[u], [v]] &=& [[u_L],[[u_R],[v]]] -  [u_R]\cdot [[u_L], [v]]+ [[u_L],[v]]\cdot [u_R] \\
&\in& [[u_L], k\langle X\rangle_{\deg(u_Rv)}^{\leq u_Rv}] + [u_R]\cdot k\langle X\rangle_{\deg(u_Lv)}^{\leq u_Lv} + k\langle X\rangle_{\deg(u_Lv)}^{\leq u_Lv} \cdot [u_R] +k\langle X\rangle_{<\deg(uv)} +I \\
&\subseteq & k\langle X\rangle_{\deg(uv)}^{\leq uv} +k\langle X\rangle_{<\deg(uv)} +I \\
&= & k\langle X|I\rangle_{\deg(uv)}^{\leq uv} +k\langle X|I\rangle_{<\deg(uv)} +I.
\end{eqnarray*}
Here, the containment is by the induction hypothesis;
the last equality is by Part (1) and Lemma \ref{standard-basis-2};
and the inclusion is by Lemma \ref{commutator-subalgebra} and the following two observations. The first one is $u_R<_{\lex} uv$ by Lemma \ref{lex-order}; and the second one is $u_Lv<_{\lex} uv$ by Proposition \ref{fact-Lyndon} (L2).
\end{proof}

\begin{remark}
By \cite[Proposition 6.4]{Zh}, Lemma \ref{quasi-Lie-basis}, Lemma \ref{height} (2) and the first several lines of the argument for Theorem \ref{structure-PBW-generator} (which are independent of the characteristic of $k$), it is easy to reclaim the well-known fact that every finite dimensional connected Hopf algebra over a field of characteristic $p$ is of dimension a power of $p$ (see \cite[Proposition 1.1 (1)]{Masu}).
\end{remark}

\vskip7mm

\noindent{\it Acknowledgments.}
The manuscript was completed during the visit of the first-named author to Hangzhou normal University on March 2019. He would like to thank Jiwei He for helpful discussions, and thank him and Hangzhou normal University for hospitality.
G.-S. Zhou is supported by the NSFC (Grant Nos. 11601480 \& 11871186) and K.C. Wong Magna Fund in Ningbo University;  Y. Shen is supported by the NSFC (Grant No. 11701515) and the Fundamental Research Funds of Zhejiang Sci-Tech University (Grant No. 2019Q071); D.-M. Lu is supported by the NSFC (Grant No. 11671351). The authors thank the referee for his/her careful reading and valuable comments, in particular for the suggestions on some facts of connected Hopf algebras which are dealt in Remark \ref{remark-commutative} and Remark \ref{remark-finiteness}.

\vskip7mm

\end{document}